\newcommand{\titlename}{Geometric and Combinatorial Properties of Self-similar Multifractal Measures}
\newcommand{\shorttitlename}{Self-similar Multifractal Measures}
\newcommand{\docclasses}{Primary 28A80, Secondary 37C45}
\newcommand{\dockeywords}{iterated function system, self-similar, multifractal analysis, weak separation condition}
\subjclass[2020]{\docclasses}
\keywords{\dockeywords}
\begin{document}
\title[\shorttitlename]{\titlename}
\author{Alex Rutar}
\thanks{Project supported by NSERC Grants RGPIN-2016-03719 (K. E. Hare) and RGPIN-2019-03930 (K. G. Hare)}
\address{University of Waterloo, 137 University Ave W, Waterloo, ON}
\curraddr{Mathematical Institute, North Haugh, St Andrews, Fife KY16 9SS, Scotland}
\email{alex@rutar.org}

\begin{abstract}
    For any self-similar measure $\mu$ in $\mathbb{R}$, we show that the distribution of $\mu$ is controlled by products of non-negative matrices governed by a finite or countable graph depending only on the IFS.
    This generalizes the net interval construction of Feng from the equicontractive finite type case.
    When the measure satisfies the weak separation condition, we prove that this directed graph has a unique attractor.
    This allows us to verify the multifractal formalism for restrictions of $\mu$ to certain compact subsets of $\mathbb{R}$, determined by the directed graph.
    When the measure satisfies the generalized finite type condition with respect to an open interval, the directed graph is finite and we prove that if the multifractal formalism fails at some $q\in\mathbb{R}$, there must be a cycle with no vertices in the attractor.
    As a direct application, we verify the complete multifractal formalism for an uncountable family of IFSs with exact overlaps and without logarithmically commensurable contraction ratios.
\end{abstract}

\maketitle
\tableofcontents

\section{Introduction}
Self-similar measures in $\R$ are perhaps the simplest examples of measures which exhibit complex local structure.
These measures are associated with finite sets of similarity maps in $\R$.
To be precise, by an \emph{iterated function system of similarities} (IFS) we mean a finite set of maps $\{S_i\}_{i\in\mathcal{I}}$ where each $S_i(x)=r_i x+d_i$ and $0<|r_i|<1$.
The \emph{attractor}, or \emph{self-similar set}, of this system is the unique compact set $K$ satisfying $\bigcup_{i\in\mathcal{I}}S_i(K)=K$.
Given a probability vector $\bm{p}=(p_i)_{i\in\mathcal{I}}$ where each $p_i>0$ and $\sum_i p_i=1$, the associated \emph{self-similar measure} is the unique Borel probability measure satisfying
\begin{equation*}
    \mus(E)=\sum_{i\in\mathcal{I}}p_i\mus\circ S_i^{-1}(E)
\end{equation*}
for any Borel set $E\subseteq\R$.
For a more through discussion of the background and basic properties of self-similar sets and measures, we refer the reader to Falconer's book~\cite{fal1997}.

In order to understand the general structure of the measure $\mus$ or the self-similar set $K$, one often considers basic dimensional quantities such as the Hausdorff dimension $\dimH K$ and analogous statements for measures, or other notions of dimension.
Computing these values can be highly non-trivial for general iterated function systems of similarities and there is significant literature on this matter (see, for example, \cite{bg1992,fh2009,fhor2015,hoc2014,jr2021,ln2007,nw2001,sch1994}).
In this paper, we focus on a more fine-grained notion of dimension known as the local dimension.
Given a point $x\in K=\supp\mus$, the \emph{local dimension} is given by
\begin{equation*}
    \diml\mus(x)=\lim_{t\to 0}\frac{\log \mus(B(x,t))}{\log t},
\end{equation*}
when the limit exists.
From the perspective of multifractal analysis, one is interested in determining geometric properties of the sets $K(\alpha)\coloneqq \{x\in K:\diml\mus(x)=\alpha\}$.
On the other hand, the \emph{$L^q$-spectrum} of $\mus$ is given by
\begin{equation*}
    \tau(\mus,q)=\tau(q)\coloneqq  \liminf_{t\to 0}\frac{\log \sup\sum_i\mus(B(x_i,t))^q}{\log t}
\end{equation*}
for each $q\in\R$, where the supremum is over disjoint families of closed balls with centres $x_i\in K$.

An important objective of multifractal analysis is to understand the relationship between the $L^q$-spectrum of the measure $\mus$, and the \emph{dimension spectrum} $\dimH K(\alpha)$.
A heuristic relationship between $\tau(q)$ and $\dimH K(\alpha)$, known as the multifractal formalism, was introduced by Halsey \textit{et al.}~\cite{hjk+1986}.
The \emph{multifractal formalism} states, roughly speaking, that the dimension spectrum can be computed as the concave conjugate of $\tau(q)$, i.e.
\begin{equation*}
    \dimH K(\alpha) =\tau^*(\alpha)\coloneqq \inf_{q\in\R}\{q\alpha-\tau(q)\}
\end{equation*}
for any $\alpha$ in the domain of $\tau^*(\alpha)$; see \cref{d:cmf} for a complete definition in our setting.
This concave conjugate relationship has been studied by many authors (see, for example, \cite{cm1992,fen2003a,fen2009,fl2009,flw2005,hjk+1986,lau1995,ln1999,pat1997,pw1997,shm2005}).
As a particularly elegant example, it has been verified in general for iterated function systems satisfying the strong separation condition ($S_i(K)\cap S_j(K)\neq\emptyset$ if and only if $i=j$)~\cite{cm1992}.
This separation requirement has been relaxed to the open set condition~\cite{hut1981} and the concave conjugate relationship has been verified~\cite{ap1996,pat1997,pw1997}.
In both cases, $\tau(q)$ is differentiable for all $q\in\R$ and is determined uniquely by the implicit formula $\sum_{i\in\mathcal{I}}p_i^q r_i^{-\tau(q)}=1$.

However, when the open set condition fails, outside specialized analysis of some families of examples (for example, Bernoulli convolutions associated with the unique positive root of the polynomial $x^k-x^{k-1}-\cdots-x-1$ \cite{fen2005}), there has been much less progress on verifying the multifractal formalism at all $q\in\R$.
For $q\geq 0$, the function $x\mapsto x^q$ is non-decreasing so the summation in the definition of $\tau(q)$ is dominated by closed balls with large measure.
On the other hand, for $q<0$, the summation is dominated by closed balls of small measure.
Generally speaking, understanding the multifractal analysis of measures when $q<0$ is substantially more challenging than the case $q\geq 0$.
Gaining more information about this case is our focus in this document.

\subsection{The weak separation condition}
Notably, neither the strong separation condition nor the open set condition allows for the existence of exact overlaps.
We introduce some notation: let $\mathcal{I}^*$ denote the set of all finite words on $\mathcal{I}$.
For $\sigma=(i_1,\ldots,i_n)\in\mathcal{I}^*$, write $S_\sigma=S_{i_1}\circ\cdots\circ S_{i_n}$, $r_\sigma=r_{i_1}\cdots r_{i_n}$ and, if $n\geq 1$, $\sigma^-=(i_1,\ldots,i_{n-1})$.
By \defn{exact overlaps} we mean the existence of words $\sigma\neq \tau\in\mathcal{I}^*$ such that $S_\sigma=S_\tau$.
To study examples allowing exact overlaps while still maintaining separation of non-overlapping words, Lau and Ngai introduced the weak separation condition and studied basic conditions under which the multifractal formalism holds~\cite{ln1999}.
For any $t>0$ and Borel set $E\subseteq\R$, define
\begin{equation*}
    \Lambda_t(E) = \{\sigma\in\mathcal{I}^*:r_\sigma< t\leq r_{\sigma^-},S_\sigma(K)\cap E\neq\emptyset\}.
\end{equation*}
Then the \emph{weak separation condition} is equivalent to requiring that
\begin{equation}\label{e:int-max}
    \sup_{x\in\R,t>0}\#\{S_\sigma:\sigma\in\Lambda_t(U(x,t))\}<\infty
\end{equation}
where $\# X$ denotes the cardinality of a set $X$ and $U(x,t)$ is the open ball about $x$ with radius $t$.
Note that the definition only considers functions $S_\sigma$ rather than the words $\sigma$ so as to allow exact overlaps.
To see an equivalent formulation with respect to exact overlaps or the equivalence with the original definition of Lau and Ngai, see~\cite[Thm.~1]{zer1996}.

Under the weak separation condition, verification of the multifractal formalism is subtle.
One of the earliest examples of exceptional behaviour is with respect to self-similar measures of the system of Bernoulli convolutions $\{x\mapsto \rho x,x\mapsto \rho x+(1-\rho)\}$ where the contraction ratio $\rho$ is the reciprocal of the golden mean.
In this case, the $L^q$-spectrum $\tau(q)$ has a \emph{phase transition}, or a point where $\tau(q)$ is not differentiable.
Nevertheless, the multifractal formalism still holds and $\tau(q)$ is analytic for other values of $q$~\cite{fen2005}.
Another example of exceptional behaviour is the $3$-fold convolution of the uniform Cantor measure.
In this case, it was observed that the set of attainable local dimensions is not an interval and the multifractal formalism fails~\cite{hl2001}.
The problem here is, in some sense, that the measure $\mus$ is too small at certain points in $K$.
This measure, and other related measures, were studied in detail~\cite{flw2005,hhs2021,lw2005,shm2005} and a modified multifractal formalism was proven therein.
In these cases, the failure occurs at some point $q<0$.

In an important paper, Feng and Lau~\cite{fl2009} obtain deep results about the multifractal formalism under the weak separation condition.
Using a subtle Moran construction~\cite{flw2002}, they prove that the multifractal formalism holds for any value $q\geq 0$, and for $q<0$, they give a modified multifractal formalism by considering suitable restrictions to an open ball $U_0$ which attains the supremum in the definition of the weak separation condition \cref{e:int-max}.
Unfortunately, this result does not directly give information on the validity of the multifractal formalism for values $q<0$.
In some sense, the restriction avoids the breakdown of the multifractal formalism by avoiding points in $K$ where the measure is too small.

To extend this perspective, we develop some new ideas.
Even in regions where the overlap is not dense (i.e. away from any maximal open ball $U_0$), through a general graph construction, we will show that the measure may be ``combinatorially linked'' to regions with high density where the multifractal formalism holds.
For example, consider the IFS given by the maps
\begin{align}\label{e:exifs}
    S_1(x)&=\rho x & S_2(x)&=r x+\rho(1-r) & S_3(x)&=r x+1-r
\end{align}
where $\rho>0$, $r>0$ satisfy $\rho+2r-\rho r\leq 1$.
This IFS was first studied by Lau and Wang~\cite{lw2004} and satisfies the weak separation condition.
In \cref{sss:valid-open}, we show that the maximal open sets $U_0$ can never contain the point $1$ in the self-similar set, which is a phenomenon similar to the situation of the Cantor convolution.
Despite this, we can prove (as a consequence of our more general results) that the multifractal formalism still holds for the measure $\mus$, without restriction to a subset and with any probabilities.
Our main goal in this paper is to provide a new, natural perspective for understanding the failure of the multifractal formalism, and to provide combinatorial conditions under which the multifractal formalism holds or in which one might suspect that fails.

Our starting point is the net interval construction of Feng~\cite{fen2003}.
In that document, for iterated function systems of the form $\{x\mapsto rx+d_i\}_{i\in\mathcal{I}}$ with $0<r<1$ satisfying a combinatorial overlap condition known as the finite type condition~\cite{nw2001}, he obtains formulas for the values of $\mus(\Delta)$ on families of intervals $\mathcal{F}_n$ as products of non-negative matrices.
He then uses properties of matrix products to verify differentiability of the $L^q$-spectrum (and thus the multifractal formalism by the prior work of Lau and Ngai~\cite{ln1999}) for values $q>0$.
Using some different perspectives but with the same underlying approach, he proves a modified multifractal formalism for values of $q<0$~\cite{fen2009}.

In recent work, following the techniques of Feng and operating in the same setting, Hare, Hare, and various collaborators~\cite{hhm2016,hhn2018} define a finite graph called the transition graph corresponding to the IFS.
Then they determine that the set of local dimensions at special points in $K$ called interior essential points form a closed interval, and show that the failure for the set of local dimensions to be a closed interval is determined by the existence of certain combinatorial structures in the transition graph called non-essential loop classes.

However, as observed by Testud~\cite{tes2006}, when the IFS does not have a common contraction ratio or a similar property (for example, $\log r_i/\log r_j\in\Q$ for all $i,j$~\cite{hhs2018}), one cannot apply Feng's net interval construction in a natural way.

\subsection{Summary of main results}
Our first contribution is a generalization of the net interval construction to apply to any IFS of similarities.
We determine that the distribution of $\mus$ on certain intervals which we call \emph{net intervals} is determined by a local overlap structure which we call the \emph{neighbour set} of the net interval (see~\cite{hhr2021} for the first appearance of this construction).
Our first key observations, \cref{l:nbdet} and \cref{t:ttype}, are that the neighbour set completely determines the local geometry of the attractor $K$ and the distribution of the measure $\mus$ (up to fixed constants of comparability).
This allows us in \cref{ss:tgraph} to construct a countable directed graph which we call the \emph{transition graph} of the IFS, where the vertices are the distinct neighbour sets.
Then in \cref{ss:tg-im}, we associate to each edge of the transition graph a non-negative matrices called a \emph{transition matrix} such that the distribution of $\mus$ on net intervals is given by products of these non-negative matrices.
Since we do not make any assumptions on the contraction ratios, we introduce two simple but important ideas: the notion of the \emph{transition generation} (\cref{d:tg}), and the notion of the \emph{length of an edge} (\cref{d:e-len}).
These definitions resolve the issues with the original net interval construction recognized above.

In \cref{s:ifswsc}, we turn our attention to the IFSs satisfying the weak separation condition.
In particular, we prove the existence of a relatively open subset $K_{\ess}\subseteq K$ called the \emph{set of interior essential points}, and a corresponding subgraph of the transition graph called the \emph{essential class} on which the self-similar measure has certain important regularity properties (\cref{l:bmeas}).
We call a net interval \emph{essential} if its neighbour set is a vertex in the essential class.
We determine that the set of interior essential points is large in two different senses:
\begin{theorem}\label{t:ess-large}
    Let $\{S_i\}_{i\in\mathcal{I}}$ be an IFS satisfying the weak separation condition.
    \begin{enumerate}[nl,r]
        \item If $U_0$ is any open set which attains the maximality in \cref{e:int-max}, then $K\cap U_0$ is contained in a finite union of essential net intervals.
            In particular, $K\cap U_0\subseteq K_{\ess}$.
        \item If $\mus$ is any associated self-similar measure, then $\mus(K\setminus K_{\ess})=0$.
    \end{enumerate}
\end{theorem}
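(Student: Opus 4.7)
The plan for part (1) is to show that the net intervals meeting $U_0$ at the appropriate transition generation form an absorbing collection in the transition graph. Fix $x_0, t_0$ with $U_0 = U(x_0, t_0)$ realizing the supremum $M$ in \cref{e:int-max}, and let $\mathcal{F}$ be the finite collection of net intervals at the transition generation associated with $t_0$ which meet $\overline{U_0}$. By \cref{l:nbdet}, the neighbour set of each $\Delta \in \mathcal{F}$ records the $M$ distinct similarities attaining the supremum. The key claim is that the subset of vertices of neighbour-set cardinality $M$ is closed under outgoing edges: the natural restriction of neighbours shows that any child of $\Delta$ inherits the $M$ maps, and the maximality of $M$ prevents extra neighbours from appearing at the child, for otherwise a ball at the child's scale would contradict \cref{e:int-max}. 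Hence this subset is absorbing, hence contained in the essential class (the unique attractor guaranteed by the WSC), so every $\Delta \in \mathcal{F}$ is essential and $K \cap U_0 \subseteq K_{\ess}$.

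For part (2), the key observation is that every scaled-translated copy $S_\tau(U_0)$ is again a maximal ball in the sense of \cref{e:int-max}: composition by $S_\tau$ gives an injection $\{S_\sigma : \sigma \in \Lambda_{t_0}(U_0)\} \hookrightarrow \{S_{\sigma'} : \sigma' \in \Lambda_{|r_\tau| t_0}(S_\tau(U_0))\}$, so the latter has cardinality at least $M$, with equality by the bound of \cref{e:int-max}. Consequently the argument of part (1) applies verbatim to $S_\tau(U_0)$, yielding $K \cap S_\tau(U_0) \subseteq K_{\ess}$ for every $\tau \in \mathcal{I}^*$. Now identify $\mus$ with the pushforward of the Bernoulli measure $\mu_{\bm{p}}$ on $\mathcal{I}^{\mathbb{N}}$ via the coding map $\pi$, and fix a word $\sigma_0$ with $S_{\sigma_0}(K) \subseteq U_0$ (such $\sigma_0$ exists: pick any $y \in U_0 \cap K$, code it as $y = \pi(\omega^*)$, and truncate $\omega^*$ so that $\mathrm{diam}(S_{\omega^*_1 \cdots \omega^*_n}(K)) \to 0$ inside $U_0$). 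By the standard occurrence lemma for Bernoulli sequences, $\mu_{\bm{p}}$-a.e.\ $\omega$ can be written $\omega = \tau \sigma_0 \omega'$ for some finite prefix $\tau$; then $\pi(\omega) = S_\tau(\pi(\sigma_0\omega')) \in S_\tau(S_{\sigma_0}(K)) \subseteq S_\tau(U_0) \cap K \subseteq K_{\ess}$. This gives $\mus(K \setminus K_{\ess}) = 0$.

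The principal technical obstacle is verifying the preservation of maximum cardinality under transitions in part (1). Since neighbours are identified up to distinct similarities rather than distinct words, and since the child operation in the transition graph advances by one transition generation rather than by composing with a single map, a careful translation-and-rescaling argument is required to rule out extra neighbours appearing at children of maximum-cardinality vertices: any hypothetical extra neighbour of the child would let one build a ball at the child's scale witnessing more than $M$ elements, contradicting the choice of $U_0$. Once this absorbing property is established, part (1) follows immediately, and part (2) follows by the self-similar propagation via $S_\tau$ described above.
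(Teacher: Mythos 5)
Your part (1) has two genuine gaps. First, the claim that each net interval $\Delta$ meeting $\overline{U_0}$ has a neighbour set ``recording the $M$ distinct similarities attaining the supremum'' is false: $\vs(\Delta)$ only records the maps $S_\sigma$ with $S_\sigma(K)\cap\Delta^\circ\neq\emptyset$, and the $M$ maps meeting the ball $U_0$ are in general distributed among the many net intervals inside $U_0$, each of which sees only a few of them. (In the paper's own example in \cref{sss:valid-open}, $U(1/4,1/4)$ attains the supremum with $M\geq 5$ while every neighbour set of that IFS has cardinality at most $2$.) \cref{l:nbdet} does not say what you attribute to it. Second, even granting an absorbing collection of vertices (one closed under outgoing edges), that does not place those vertices in the essential class: the whole vertex set is absorbing, and in the two-vertex graph $u\to v$, $v\to v$ the set $\{u,v\}$ is absorbing while the essential class is $\{v\}$. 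Membership in the essential class requires reachability \emph{from} the essential class (condition (ii) of its definition), which stability under outgoing edges does not provide. Relatedly, neighbour-set cardinality is not preserved when passing to children (in \cref{sss:ns-tg}, $\Delta_8$ has a two-element neighbour set but one of its children has a one-element neighbour set), so the absorbing claim itself is doubtful. The paper's proof (\cref{p:mx-wsc}) supplies exactly the missing ingredient: fix an essential net interval $\Delta_0$ and $\sigma_0$ with $r_{\sigma_0}>0$ and $S_{\sigma_0}([0,1])\subseteq\Delta_0$; using maximality of $\#\mathcal{S}_{r_{\sigma_0}t_0}(S_{\sigma_0}(U_0))$ together with a quantitative lower bound on the distance from $S_{\sigma_0}(B(x_0,t_0))$ to the images $S_\tau(K)$ that miss $S_{\sigma_0}(U_0)$, one shows that for $t_1$ small enough every $\Delta\in\mathcal{F}_{t_1}$ meeting $U_0$ has $S_{\sigma_0}(\Delta)$ again a net interval with the same neighbour set. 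Since $S_{\sigma_0}(\Delta)$ is a descendant of the essential interval $\Delta_0$, the vertex $\vs(\Delta)$ is reachable from the essential class and hence essential. Your ``a ball at the child's scale would contradict the maximality'' sketch gestures at the right mechanism but never identifies which ball would witness more than $M$ maps; to repair part (1) you need this rigidity-under-$S_{\sigma_0}$ argument rather than a cardinality/absorbing argument.

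Your part (2), by contrast, is correct conditional on part (1) and takes a genuinely different route from the paper. You first show every $S_\tau(U_0)$ is again maximal — exactly as in \cref{p:mx-wsc}(i) — and then push the Bernoulli measure through the coding map and apply Borel--Cantelli to occurrences of the block $\sigma_0$, concluding that $\mu_{\bm{p}}$-a.e.\ point of $K$ lies in some $S_\tau(U_0)\subseteq K_{\ess}$. The paper instead proves \cref{t:ess-meas} via the measure approximation \cref{l:bmeas} and an iterative removal argument giving $\mus(E_n)\leq(1-C/3)^n$. Your argument is shorter and more elementary; the paper's buys the stronger conclusion that the union of net intervals with a single \emph{fixed} essential neighbour set $v$ already carries full measure, which is what feeds into the later multifractal analysis.
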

See \cref{p:mx-wsc} and \cref{t:ess-meas} for proofs of these facts.

We also obtain dimensional results at certain points in $K$ called \emph{periodic points}, an idea introduced by Hare, Hare, and Matthews.
In \cref{p:period}, we prove that an elegant formula holds for the local dimensions at such points, and in \cref{t:per-dens} we show that the sets of local dimensions at periodic points are dense in the sets of upper and lower local dimensions at points in $K_{\ess}$.
This generalizes a pre-existing result~\cite[Cor.~3.15]{hhn2018} to the weak separation case.

We then focus on understanding the multifractal formalism from the perspective of the essential class.
We introduce the notion of \defn{weak regularity} in \cref{d:weak-reg}.
Our main result in this section is the following (see \cref{t:multi-wsc} for a complete statement and proof):
\begin{theorem}\label{ti:multi-wsc}
    Let $\{S_i\}_{i\in\mathcal{I}}$ be an IFS satisfying the weak separation condition and let $\mus$ be an associated self-similar measure.
    Let $E=\Delta_1\cup\cdots\cup\Delta_n$ be a finite union of essential net intervals such that $E\cap K$ is weakly regular.
    Then $\nu=\mus|_E$ satisfies the multifractal formalism and
    \begin{equation}
        \{\diml\nu(x):x\in\supp \nu\} = \{\diml\mus(x):x\in K_{\ess}\}.
    \end{equation}
    Moreover, the values of $\tau(\nu,q)$ do not depend on the choice of $\Delta_1,\ldots,\Delta_n$ and for $q\geq 0$, $\tau(\mus,q)=\tau(\nu,q)$.
\end{theorem}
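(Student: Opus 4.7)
The plan is to transport the problem from the geometric level to the combinatorial level of paths through the essential class, and then to execute a Gibbs-type variational argument within that skeleton.

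\textbf{Step 1 (matrix representation).} Using \cref{t:ttype} and the transition matrix construction of \cref{ss:tg-im}, every essential net interval $\Delta$ reachable from one of the $\Delta_i$ along a path $\gamma$ in the essential class satisfies $\mus(\Delta) \asymp \lVert T(\gamma) \rVert$, where $T(\gamma)$ is the product of transition matrices along the edges of $\gamma$, and the diameter of $\Delta$ is comparable to the product of the edge lengths of $\gamma$. The weak regularity of $E \cap K$ provides the uniform two-sided control needed to turn this into a reversible translation between geometric and combinatorial data. Consequently, for each scale $t$, the defining sum for $\tau(\nu, q)$ becomes, up to a bounded multiplicative error, a sum indexed by paths $\gamma$ in the essential class whose cumulative edge length is comparable to $t$.

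\textbf{Step 2 (independence of $E$ and the case $q \geq 0$).} Since the essential class is the unique attractor of the transition graph (\cref{s:ifswsc}), any two finite unions $E, E'$ of essential net intervals produce sums in Step 1 that differ by a multiplicative factor contributed by a finite family of bridging paths; this factor is absorbed in $\log$ and vanishes after division by $\log t$, so $\tau(\nu, q)$ is independent of the choice of $\Delta_1, \ldots, \Delta_n$. For $q \geq 0$, the identity $\mus(K \setminus K_{\ess}) = 0$ from \cref{t:ess-large}(ii), together with the fact that the supremum defining $\tau(q)$ for $q \geq 0$ is controlled by balls of large measure, lets a Besicovitch-type covering argument relocate the centres of contributing balls into $E$ up to a bounded distortion, giving $\tau(\mus, q) = \tau(\nu, q)$.

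\textbf{Step 3 (local dimensions and the formalism).} Each $x \in K_{\ess}$ is coded by an infinite path $\gamma(x)$ in the essential class, and by Step 1 its local dimension is determined by the asymptotic growth of $\lVert T(\gamma(x)) \rVert$ relative to the cumulative edge length; the same family of coding paths (after a bounded re-routing via the bridging of Step 2) codes points of $\supp \nu$, whence the two local-dimension sets coincide. The upper bound $\dimH \{x \in \supp \nu : \diml \nu(x) = \alpha\} \leq \tau^*(\nu, \alpha)$ is standard from the covering definition of $\tau$. For the lower bound, given $\alpha$ in the domain of $\tau^*(\nu, \cdot)$, select $q$ with $q \alpha - \tau(\nu, q) = \tau^*(\nu, \alpha)$ and construct a quasi-Bernoulli measure $\nu_q$ on the path space of the essential class, weighted so that $\nu_q$-typical paths have $\lVert T(\gamma) \rVert^q$ decaying at the prescribed generation rate $\tau(\nu, q)$. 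Pushing $\nu_q$ forward via the net-interval coding and invoking Billingsley's lemma, together with the two-sided comparability of Step 1, then produces a subset of $\supp \nu$ of Hausdorff dimension $\tau^*(\nu, \alpha)$ on which $\diml \nu = \alpha$.

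\textbf{Main obstacle.} The technical heart is the construction of $\nu_q$ for $q < 0$: products of non-negative matrices lack the spectral positivity exploited in the equicontractive finite type case, so the classical thermodynamic formalism does not apply off the shelf. Weak regularity should be precisely what makes these matrix products behave like a sufficiently well-behaved cocycle on a subshift of finite type so that a Gibbs-type measure exists with the required typical decay, but verifying that its pushforward to $\R$ retains the claimed Hausdorff dimension --- without being collapsed by the exact overlaps captured in the neighbour-set construction --- is where the delicate work sits.
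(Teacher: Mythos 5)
There is a genuine gap, and you have in fact located it yourself in your final paragraph: the construction of the auxiliary measure $\nu_q$ for $q<0$ and the verification that its pushforward has Hausdorff dimension $\tau^*(\nu,\alpha)$ is not an obstacle to be noted but the entire analytic content of the theorem, and your proposal does not supply it. The paper does not attempt this construction at all. Instead it invokes the Feng--Lau result (\cref{p:multi-wsc}, i.e.\ \cite[Thm.~5.4]{fl2009}), which already establishes the complete multifractal formalism for $\mus$ restricted to a maximal open ball $U_0$ attaining the supremum in the weak separation condition; the paper's actual contribution in \cref{t:multi-wsc} is the \emph{transfer} step, showing $\tau_{U_0}(\mus,q)=\tau(\nu,q)$, $D_{U_0}(\mus)=D(\nu)$, and $\dimH K_{U_0}(\mus,\alpha)=\dimH K(\nu,\alpha)$, using \cref{l:nbdet} to map each $\Delta_i$ onto a disjoint copy $\Delta_i^*\subset U_0$ with the same neighbour set, \cref{l:br-restriction} to handle packings near the endpoints of the $\Delta_i$, and \cref{t:per-dens} for the equality of local-dimension sets. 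Without either citing Feng--Lau or actually carrying out the Gibbs-measure construction (which in \cite{fl2009} rests on the quasi-product property and a delicate Moran construction, not on anything resembling classical thermodynamic formalism for matrix cocycles), your Step 3 lower bound is an unproven assertion.

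A secondary but substantive error: you assign weak regularity the role of making the matrix products ``behave like a sufficiently well-behaved cocycle.'' It does no such thing. Weak regularity is purely a boundary condition on the compact set $E$: it guarantees that a ball $B(x,t)$ centred at a point of $E\cap K$ near an endpoint of some $\Delta_i$ still contains a comparably sized ball lying inside $E$, so that restricting $\mus$ to the closed set $E$ does not artificially inflate the $q<0$ packing sums (\cref{l:br-restriction}; see also the Cantor-measure example at the start of \cref{ss:weak-reg}). The structural input that makes the $q<0$ analysis work is the irreducibility of the essential class (every vertex reaches every other vertex, \cref{p:wsc-es}), which is what underlies the quasi-product property in Feng--Lau's argument. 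Your Step 2 ``bridging paths'' heuristic for independence of $E$ is directionally right but also needs the two-sided comparability of \cref{l:nbdet} together with the disjointness of the images $\Delta_i^*$ inside $U_0$ to convert packings of one set into packings of the other without loss when $q<0$; a multiplicative factor ``absorbed in $\log$'' is not automatic when the exponent $q$ is negative and the sum is dominated by balls of small measure.
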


Our verification of this modified multifractal formalism begins with~\cite[Thm~1.2]{fl2009}, but then uses the matrix product structure of the transition graph to move the weight of the measure from the sets $U_0$ to any net interval in the essential class.
We note some minor improvements: rather than considering restrictions of the $L^q$-spectrum to an open set, we obtain the results as a restriction to a compact subset $\Delta_1\cup\cdots\cup\Delta_n$, where this subset can strictly contain a neighbourhood of any open set $U_0$ attaining the maximum in \cref{e:int-max} (combine \cref{t:ess-large} and \cref{l:er-extension}).
This boundary regularity condition is discussed in detail in \cref{ss:weak-reg}.

In fact, our matrix product structure provides a more general perspective for understanding the quasi-product property of Feng and Lau~\cite{fl2009}; a natural analogue holds in our setting where their set $\Omega$ is replaced by a set of net intervals which have the neighbour of a fixed essential net interval.
As a result, a more direct proof of \cref{ti:multi-wsc} is possible.
However, many details of this proof overlap with the approach of Feng and Lau, so we do not include this approach.

Combining this result with \cref{t:ess-large}, we prove the following modified multifractal formalism for any IFS satisfying the weak separation condition:
\begin{corollary}
    Let $\{S_i\}_{i\in\mathcal{I}}$ be an IFS satisfying the weak separation condition with associated self-similar measure $\mus$.
    Then there exists a sequence of compact sets $(K_m)_{m=1}^\infty$ with $K_m\subseteq K_{m+1}\subseteq K$ for each $m\in\N$ such that
    \begin{enumerate}[nl,r]
        \item $\lim_{m\to\infty}\mus(K_m)=1$,
        \item each $\mu_m\coloneqq \mus|_{K_m}$ satisfies the multifractal formalism, and
        \item $\tau(\mu_m,q)$ and $D(\mu_m)$ do not depend on the index $m$.
    \end{enumerate}
\end{corollary}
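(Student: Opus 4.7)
The plan is to construct the sequence $(K_m)$ by exhausting the set of interior essential points $K_{\ess}$ with an increasing sequence of finite unions of essential net intervals, and then applying \cref{ti:multi-wsc} to each. The reduction is powered by \cref{t:ess-large}(2), which gives $\mus(K\setminus K_{\ess})=0$, so any inner approximation of $K_{\ess}$ by compact subsets on which the formalism holds will automatically furnish condition (1).

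First, I would enumerate the countable collection of all essential net intervals, taken across all transition generations, as $\{\Delta^{(k)}\}_{k=1}^\infty$ and set $E_m\coloneqq\Delta^{(1)}\cup\cdots\cup\Delta^{(m)}$. Since every point of $K_{\ess}$ lies in some essential net interval, $E_m\cap K$ increases to $K_{\ess}$, and continuity from below gives $\mus(E_m\cap K)\to\mus(K_{\ess})=1$. The principal obstacle is that \cref{ti:multi-wsc} requires the finite union under consideration to be weakly regular, while a generic $E_m$ need not satisfy this boundary condition. To circumvent this, I would invoke \cref{l:er-extension} (referenced in the text immediately preceding the corollary, whose stated role is precisely to enlarge finite unions of essential net intervals to weakly regular ones) to produce $\widetilde{E}_m\supseteq E_m$, a finite weakly regular union of essential net intervals. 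Passing if necessary to $\widetilde{E}_1\cup\cdots\cup\widetilde{E}_m$ preserves monotonicity, and setting $K_m\coloneqq\widetilde{E}_m\cap K$ yields compact nested sets with $\mus(K_m)\to 1$.

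Conditions (2) and (3) then follow directly from \cref{ti:multi-wsc} applied to $\widetilde{E}_m$: the theorem asserts that $\mu_m\coloneqq\mus|_{K_m}$ satisfies the multifractal formalism, that the set of local dimensions of $\mu_m$ equals $\{\diml\mus(x):x\in K_{\ess}\}$, and that the $L^q$-spectrum $\tau(\mu_m,q)$ is independent of the particular choice of finite union of essential net intervals. Consequently both $\tau(\mu_m,q)$ and $D(\mu_m)$ are independent of $m$, giving (3). Beyond the bookkeeping needed to produce a nested weakly regular exhaustion, the corollary is a repackaging of the preceding structural and formalism results.
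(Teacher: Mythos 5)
Your overall strategy---exhaust $K_{\ess}$ by an increasing sequence of compact sets of measure tending to $1$, fatten each to a weakly regular finite union of essential net intervals via \cref{l:er-extension}, take unions to force nesting, and apply \cref{t:multi-wsc}---is the same as the paper's. The only real difference is the source of the exhaustion: the paper notes that $K_{\ess}$ is relatively open in $K$ with $\mus(K_{\ess})=1$ (\cref{t:ess-meas}) and invokes inner regularity of the Borel measure to produce compact $F_m\subset K_{\ess}$ with $\mus(F_m)\to 1$, whereas you enumerate the (countably many) essential net intervals and use continuity from below. Both exhaustions work in principle, and your explicit handling of nestedness, together with the observation that a finite union of weakly regular sets is weakly regular, is if anything more careful than the paper's write-up.

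The one step that does not go through as written is the application of \cref{l:er-extension} to $E_m$. That lemma requires its input $F$ to be a \emph{compact subset of $K_{\ess}$}; its proof hinges on choosing $\delta$ so small that $F^{(\delta)}$ is contained in one of the open sets $U_t$. But $E_m\cap K$ need not be contained in $K_{\ess}$: an endpoint $x$ of an essential net interval $\Delta$ may lie in $K$ and yet admit a second symbolic representation, through the adjacent net interval, which never enters the essential class. Such an $x$ is an essential point but not an \emph{interior} essential point, so $x\notin K_{\ess}$ and no neighbourhood of $x$ lies in any $U_t$; the enlargement lemma is then not applicable to $E_m\cap K$. The repair is cheap: since $\mus$ is non-atomic and $\partial E_m$ is finite, replace $E_m\cap K$ by the compact set $F_m=\{x\in E_m\cap K:\dist(x,\partial E_m)\geq\epsilon_m\}\subset E_m^\circ\cap K\subseteq K_{\ess}$, with $\epsilon_m>0$ chosen so that $\mus(F_m)\geq\mus(E_m\cap K)-1/m$, and apply \cref{l:er-extension} to $F_m$---at which point you have essentially reconstructed the paper's proof.
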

We note the similarity of this result to a result of Feng~\cite[Thm.~1.2]{fen2009}, which follows from general results about the multifractal formalism of certain matrix-valued functions satisfying an irreducibility condition.
However, the techniques used therein only apply naturally in the finite type case for IFSs of the form $\{x\mapsto rx+d_i\}_{i\in\mathcal{I}}$.

We also obtain the following important corollary:
\begin{corollary}\label{c:mfcor}
    Let $\{S_i\}_{i\in\mathcal{I}}$ be an IFS satisfying the weak separation condition with transition graph $\mathcal{G}$.
    Suppose there is a bound on the maximum length of a path with no vertices in the essential class.
    Then any associated measure $\mus$ satisfies the multifractal formalism.
\end{corollary}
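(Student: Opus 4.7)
The plan is to use the hypothesis to produce a finite union $E$ of essential net intervals that covers all of $K$, and then invoke \cref{ti:multi-wsc} to conclude. Let $L$ be the given bound on the length of directed paths in $\mathcal{G}$ containing no vertex in the essential class. Since the weak separation condition guarantees that the essential class is the unique attractor of $\mathcal{G}$, it is absorbing: every edge out of an essential vertex lands inside the essential class. Fix $n_0 > L$ and let $\Delta$ be any net interval at generation $n_0$. The unique walk in $\mathcal{G}$ from the root to the neighbour set of $\Delta$ has length $n_0 > L$, so by hypothesis this walk must pass through an essential vertex; the absorbing property then forces the final vertex, i.e.\ the neighbour set of $\Delta$ itself, to be essential. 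Hence every net interval at generation $n_0$ is essential.

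Let $\Delta_1,\ldots,\Delta_k$ enumerate the (finitely many) net intervals at generation $n_0$, and set $E \coloneqq \Delta_1 \cup \cdots \cup \Delta_k$. Since net intervals at a fixed generation tile the convex hull of $K$, we have $K \subseteq E$, and therefore $\nu \coloneqq \mus|_E$ coincides with $\mus$. It remains to check that $E \cap K$ is weakly regular. Because $E$ contains $K$ in full, the only candidate boundary points of $E$ that meet $K$ are the two endpoints of the convex hull of $K$; these are fixed by the extremal maps of the IFS, and the behaviour of $\mus$ near them is elementary. If this is not immediate from \cref{d:weak-reg}, one can instead invoke \cref{l:er-extension} to enlarge $E$ slightly into a union of essential net intervals on which the weak regularity condition does hold without losing the containment of $K$.

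With weak regularity in hand, \cref{ti:multi-wsc} applied to $E$ yields that $\nu$ satisfies the multifractal formalism, and since $\nu = \mus$ this is exactly the claim of the corollary. Note that we are forced to arrange $E \supseteq K$ precisely because \cref{ti:multi-wsc} only guarantees $\tau(\mus,q) = \tau(\nu,q)$ unconditionally for $q \geq 0$, so without $\nu = \mus$ one cannot reach the full range $q \in \R$. The main obstacle is the verification of weak regularity of $E \cap K$; the rest of the argument is essentially graph-theoretic bookkeeping, relying only on the absorbing property of the essential class together with the uniform bound $L$.
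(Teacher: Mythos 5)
Your proposal is correct and follows essentially the same route as the paper: the paper likewise observes that every net interval deep enough (it takes generation $r_{\min}^M$, using $L(e)\geq r_{\min}$, where you take symbolic depth $n_0>L$) must be essential because its symbolic representation is a path of length exceeding the bound and the essential class is absorbing, so $K$ is covered by finitely many essential net intervals and \cref{t:multi-wsc} applies. Your primary justification of weak regularity (via the endpoints of the convex hull being fixed by extremal maps) is not quite the right reason --- the clean argument is that $E\cap K=K$ has empty relative boundary, so \cref{l:boundary-reg} together with Ahlfors regularity of $K$ under the weak separation condition gives weak regularity immediately --- but your fallback via \cref{l:er-extension} (noting $K=K_{\ess}$ here) also closes the gap.
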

In particular, suppose $\mathcal{G}$ is finite.
In this situation, the only mechanism for the failure of the multifractal formalism is the existence of a cycle (a path in the transition graph which begins and ends at the same vertex) which is not contained in the essential class.
This gives a combinatorial condition which guarantees that the multifractal formalism holds.
In this situation, it is possible to write a finite algorithm to determine whether such a cycle exists.

In particular, in \cref{t:rr-multif}, we apply this to the family of IFS defined in \cref{e:exifs}:
\begin{corollary}
    Let $\{S_i\}_{i=1}^3$ be the IFS defined in \cref{e:exifs}.
    Then for any probability weights $\bm{p}=(p_i)_{i=1}^3$, the associated self-similar measure $\mus$ satisfies the complete multifractal formalism.
\end{corollary}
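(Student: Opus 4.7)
The plan is to apply \cref{c:mfcor} to the IFS $\{S_1,S_2,S_3\}$ of \cref{e:exifs}, reducing the problem to two combinatorial statements about the associated transition graph $\mathcal{G}$: first, that $\mathcal{G}$ is finite, and second, that no cycle of $\mathcal{G}$ has all its vertices outside the essential class.

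For finiteness, I would verify that the IFS is of generalized finite type with respect to the open interval $(0,1)$. The hypothesis $\rho+2r-\rho r\leq 1$ is exactly the condition that $S_2([0,1])=[\rho(1-r),\rho(1-r)+r]$ and $S_3([0,1])=[1-r,1]$ be disjoint, so the only overlap among first-level cylinders occurs between $S_1([0,1])=[0,\rho]$ and $S_2([0,1])$ inside the interval $[\rho(1-r),\rho]$. Moreover, a direct check shows that the only exact overlap in the semigroup generated by $\{S_1,S_2,S_3\}$ is the single relation $S_1\circ S_3=S_2\circ S_1$; iterating this relation and tracking the neighbour sets at successive transition generations yields only finitely many distinct types, so $\mathcal{G}$ is finite.

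Next I would describe the essential class and rule out non-essential cycles. A natural choice of a maximal open set $U_0$ attaining \cref{e:int-max} is a small subinterval of $(\rho(1-r),\rho)$, where the overlap multiplicity is two; by \cref{t:ess-large}, the essential class contains every neighbour set of a net interval meeting $U_0$. I would then enumerate the remaining vertices, which by \cref{sss:valid-open} correspond to net intervals near the excluded right endpoint $1\in K$. Although the neighbour set $\mathcal{V}_1$ of the net interval containing $1$ admits a self-loop coming from iteration of $S_3$, I would verify that $\mathcal{V}_1$ itself belongs to the essential class: the ancillary neighbours of the form $S_3^k\circ S_j$ for $j\in\{1,2\}$ provide outgoing edges from $\mathcal{V}_1$ toward the overlap region, and iterated application of $S_3$ provides paths back, so $\mathcal{V}_1$ lies in the same strongly connected component as the essential vertices identified via $U_0$.

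The main obstacle is the explicit combinatorial bookkeeping of neighbour sets and transition edges uniformly across the two-parameter family $\{(\rho,r):\rho+2r-\rho r\leq 1\}$, since the precise list of neighbour sets is sensitive to the relative magnitudes of $\rho$ and $r$ and in particular to which powers $\rho^a r^b$ are comparable. However, because the only exact overlap is generated by $S_1\circ S_3=S_2\circ S_1$ and $S_3$ separates cleanly from $S_2$, the combinatorial structure admits a uniform description which, case by case, suffices to exclude any cycle outside the essential class and hence to conclude via \cref{c:mfcor} that $\mus$ satisfies the complete multifractal formalism.
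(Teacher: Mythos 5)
Your high-level strategy --- reduce to a combinatorial statement about the transition graph and then verify that no cycle avoids the essential class --- is the same as the paper's, but the decisive step is missing. The paper's proof of \cref{t:rr-multif} reduces, via \cref{c:nl-multi} and the remark following it, to showing that the single vertex $v_0=\vs([0,1])=\{x\mapsto x\}$ lies in the essential class (which forces $\mathcal{G}=\mathcal{G}_{\ess}$, so finiteness of $\mathcal{G}$ is not even needed); and to show $v_0\in V(\mathcal{G}_{\ess})$ one must exhibit, from \emph{every} vertex reachable from $v_0$, a path back to $v_0$. Your proposed mechanism for these return paths --- ``iterated application of $S_3$ provides paths back'' --- does not do this: iterating $S_3$ fixes the point $1$ and only produces the self-loop at $v_0$ itself; it says nothing about how to return to $v_0$ from the vertices generated by the overlap region. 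The correct mechanism is to locate cylinders $S_\sigma([0,1])$ that are isolated from all other cylinders of the same generation and hence are themselves net intervals with neighbour set $v_0$ (e.g.\ $S_{23}([0,1])$, which is separated from $S_{22}([0,1])$, $S_1([0,1])$ and $S_3([0,1])$ by the hypothesis $\rho+2r-\rho r\leq 1$), sitting inside descendants of each vertex.

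The hard case, which you correctly flag as ``the main obstacle'' but then simply assert away, is the overlap vertex $v_2=\vs([\rho(1-r),\rho])$: because $S_1S_3=S_2S_1$, its descendant structure is governed by the words $2\cdots 2\,1$ and $2\cdots 2$, and which of these belong to a given $\Lambda_t$ depends on how the powers of $r$ interleave with $\rho$. The paper's proof spends most of its length on exactly this point (the analysis of $\Gamma_{r^k}$ and the net intervals $\Delta^{i}$ in generation $\rho$, split according to whether $r^{m+1}<\rho$ or $r^{m+1}=\rho$), showing that every descendant of $v_2$ eventually reaches $\vs(\Delta_1)$ and hence $v_0$. Without this computation your argument is an outline, not a proof. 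Two smaller inaccuracies: a small subinterval of $(\rho(1-r),\rho)$ with ``overlap multiplicity two'' is not what attains the maximum in \cref{e:int-max} (the paper computes that the supremum there is at least $5$, counting maps $S_\sigma$ with $\sigma\in\Lambda_t$, not neighbours); and the vertices not immediately covered by \cref{t:ess-large} are not only those ``near $1$'' --- the paper instead shows all vertices are essential by the return-path argument, with \cref{sss:valid-open} serving only to illustrate that the essential class strictly exceeds the union of maximal balls.
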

To the best knowledge of the author, this is the first example of an IFS with exact overlaps and without logarithmically commensurable contraction ratios for which the complete multifractal formalism is proven to hold.
Understanding failure of the multifractal formalism is based critically on understanding the properties of cycles in the transition graph outside the essential class.

By combining our results with the work of Deng and Ngai \cite{dn2017}, we can also gain information about differentiability of the $L^q$-spectrum.
In a slightly specialized case, \cite[Thm.~1.2]{dn2017} states that, for probabilities $p_2>p_3$,
\begin{equation*}
    f(\alpha)\coloneqq \dimH\{x\in K:\diml\mus(x)=\alpha\}
\end{equation*}
is the concave conjugate of a differentiable function.
Combining this with \cref{c:mfcor} and involutivity of concave conjugation, we obtain the following result:
\begin{corollary}
    Let $\{S_i\}_{i=1}^3$ be the IFS defined in \cref{e:exifs}.
    Then if $p_2>p_3$, the $L^q$-spectrum $\tau(\mus,q)$ is differentiable for any $q\in\R$.
\end{corollary}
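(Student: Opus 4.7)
The plan is to combine the preceding corollary on the complete multifractal formalism with Theorem~1.2 of Deng and Ngai~\cite{dn2017}, via the involutivity of concave conjugation. Writing $\tau(q) \coloneqq \tau(\mus,q)$ for brevity, the preceding corollary gives
\begin{equation*}
    f(\alpha) = \tau^*(\alpha) = \inf_{q\in\R}\{q\alpha - \tau(q)\}
\end{equation*}
on the domain of $f$. Deng and Ngai furnish, under the hypothesis $p_2 > p_3$, a differentiable concave function $g\colon\R\to\R$ with $f = g^*$. Hence $\tau^* = g^*$ as functions of $\alpha$.

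I would then invoke the concave Fenchel--Moreau theorem: every proper upper semicontinuous concave function $h\colon\R\to[-\infty,\infty)$ satisfies $h^{**} = h$. The $L^q$-spectrum $\tau$ is finite and concave on $\R$ (the latter by H\"older's inequality applied to the defining supremum), and hence continuous, so $\tau^{**} = \tau$. The function $g$ is differentiable and concave by hypothesis, so likewise $g^{**} = g$. Taking the concave conjugate of both sides of $\tau^* = g^*$ therefore gives
\begin{equation*}
    \tau(q) = \tau^{**}(q) = g^{**}(q) = g(q)
\end{equation*}
for every $q \in \R$. Since $g$ is differentiable, so is $\tau$.

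The only real subtlety is checking that both $\tau$ and $g$ satisfy the hypotheses of Fenchel--Moreau; the required concavity, finiteness and continuity of $\tau$ are standard properties of the $L^q$-spectrum of a compactly supported probability measure, and the corresponding regularity of $g$ is built into the statement of~\cite[Thm.~1.2]{dn2017}. Once these points are in hand the argument reduces to two applications of the concave conjugate, with no further analysis required.
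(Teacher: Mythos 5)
Your argument is correct and is essentially the same as the paper's: the paper also derives this corollary by combining the complete multifractal formalism for this IFS (so that $f=\tau^*$) with \cite[Thm.~1.2]{dn2017} (so that $f=g^*$ for a differentiable $g$) and then invoking involutivity of concave conjugation to conclude $\tau=g$. Your write-up simply makes explicit the Fenchel--Moreau hypotheses that the paper leaves implicit.
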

This answers some of the questions raised in \cite{dn2017}.

Finally, in \cref{s:fn-ex}, we investigate some specific families of IFSs to illustrate these results; notably, we give an in-depth analysis of the IFS given in \cref{e:exifs}.
In fact, every example in that section has a finite transition graph: this is equivalent to the generalized finite condition of Lau and Ngai~\cite{ln2007} holding with respect to an open interval (see~\cite[Thm.~3.4]{hhr2021} and \cref{r:fnc-d} for a proof).
Moreover, when $K$ is a convex set, a recent result gives that the weak separation condition is equivalent to the finiteness of the transition graph~\cite[Thm.~4.4]{hhr2021} (see also \cite{fen2016}).
In general, the author believes this to be true without any convexity assumption on $K$:
\begin{conjecture}
    Let $\{S_i\}_{i\in\mathcal{I}}$ be an IFS in $\R$ with transition graph $\mathcal{G}$.
    Then $\{S_i\}_{i\in\mathcal{I}}$ satisfies the weak separation condition if and only if $\mathcal{G}$ is finite.
\end{conjecture}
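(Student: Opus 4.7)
The plan is to prove the easier direction first and then outline a strategy for the harder direction. For ``$\mathcal{G}$ finite implies WSC'', I would argue as follows: if the transition graph has finitely many vertices, then the neighbour set of every net interval is drawn from a finite collection, and in particular the cardinality of each neighbour set is uniformly bounded by some constant $N$. By \cref{l:nbdet}, the neighbours of a net interval $\Delta$ at its corresponding transition generation $t$ are in bijection with the distinct maps $S_\sigma$ for $\sigma\in\Lambda_t$ whose image meets a neighbourhood of $\Delta$. Any open ball $U(x,t)$ intersects a bounded number of net intervals at the corresponding generation, so the supremum in \cref{e:int-max} is bounded by a constant multiple of $N$, establishing WSC.

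For the converse direction, the plan proceeds in three stages. First, use WSC to bound the cardinality of each neighbour set uniformly; this is immediate from \cref{e:int-max} applied to an open ball slightly larger than a given net interval, combined with \cref{l:nbdet}. Second, establish that the collection of \emph{normalized} neighbours that can appear across all net intervals is discrete as a subset of the space of affine similarities $x\mapsto ax+b$. By the construction of transition generations in \cref{d:tg}, both $a$ and $b$ range over fixed compact intervals determined by the contraction data of the IFS, so discreteness would imply that only finitely many normalized neighbours arise. Third, combine the bounded cardinality from stage one with the finiteness from stage two to conclude that only finitely many neighbour sets appear, and hence $\mathcal{G}$ is finite.

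The main obstacle is stage two. Zerner's equivalent formulation of WSC~\cite{zer1996} provides a positive separation between distinct maps $S_\sigma\neq S_\tau$ occurring at comparable scales, measured by their action on $K$. When $K$ is convex, handled in \cite[Thm.~4.4]{hhr2021}, one can compare normalized neighbours on the entire interval $[0,1]$ and this pointwise separation on $K$ transfers directly to separation of the affine parameters $(a,b)$. In the general case, however, $K$ may be highly porous, and two distinct similarities can approximate each other arbitrarily well on $K$ without being globally close; the WSC controls the maps $S_\sigma$ themselves, but converting this control into a separation bound on the affine parameters of their normalized representatives requires some lower bound on the ``effective width'' of $K\cap \Delta$ as seen by a neighbour. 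A natural strategy is to exploit the fact that two neighbours of a common net interval $\Delta$ both meet $\Delta$, and then iterate the WSC separation bound at successively finer scales inside $\Delta$ using the matrix-product structure developed in \cref{ss:tg-im}; coincidence of affine parameters up to a sufficiently small tolerance should then force coincidence of the underlying maps. Making this iteration effective without a convexity hypothesis on $K$ appears to be the essential difficulty, which is why the statement remains conjectural.
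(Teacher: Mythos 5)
You have not proved this statement, and neither does the paper: it is stated there as a \emph{conjecture}, with no proof offered. The only case that is actually settled is the one you flag, namely $K$ convex, via \cite[Thm.~4.4]{hhr2021} (see also \cite{fen2016}); the general case is open, and your own write-up concedes as much at the end of stage two. So the honest verdict is that your proposal is a correct diagnosis of where the difficulty lies rather than a proof. Concretely, the missing step is exactly your ``discreteness'' claim: the weak separation condition, in Zerner's formulation, separates distinct maps $S_\sigma\neq S_\tau$ at comparable scales only through their action on $K$, and when $K$ is not convex two distinct normalized neighbours $x\mapsto ax+b$ and $x\mapsto a'x+b'$ of the same net interval could in principle agree to arbitrarily high precision on $K\cap[0,1]$ while their affine parameters remain distinct; compactness of the parameter range then gives you nothing, since an infinite set in a compact parameter space need not be finite unless it is uniformly separated. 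Your proposed fix --- iterating the separation bound at finer scales inside $\Delta$ via the matrix-product structure --- is a plausible heuristic, but you do not exhibit the lower bound on the ``effective width'' of $K\cap\Delta$ that the iteration would need, and without it the argument does not close.

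Two smaller remarks on the direction you do essentially carry out. First, ``$\mathcal{G}$ finite implies WSC'' is known (the paper cites \cite[Thm.~1.1]{ln2007} and \cite[Thm.~3.7]{hhr2021}), but your sketch elides the reason an open ball $U(x,t)$ meets only boundedly many net intervals of generation $t$: one needs a lower bound $\diam(\Delta)\gtrsim t$, which follows because $\tg(\Delta)=\lm(\Delta)\diam(\Delta)<t$ together with $\lm(\Delta)\geq r_{\min}t/\diam(\Delta)$ forces $\diam(\Delta)\geq r_{\min}t/M$ once $\lm$ is bounded over the finitely many neighbour sets. Second, a word $\sigma\in\Lambda_t(U(x,t))$ may have $S_\sigma(K)$ meeting $U(x,t)$ only at endpoints of net intervals, so the bijection you invoke with neighbours should really be run through the covering neighbour sets $\vsc(\Delta)$ and \cref{p:wsci} (cf.\ \cref{r:fnc-d}) rather than $\vs(\Delta)$ alone. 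Neither issue is fatal to that direction, but the converse remains the open problem the paper records it as.
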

The results obtained in this paper under the weak separation condition, and the similar strength to results proven under various finite type conditions, provide some more evidence towards this equivalence in general.
\subsection{Limitations and future work}
We note here that the \cref{c:mfcor} is not a dichotomy.
While the non-existence of cycles outside the transition graph guarantees that the multifractal formalism holds, the converse need not hold.
We have examples of measures satisfying the open set condition (with respect to an open set that is not an open interval) with cycles outside the essential class, while the open set condition guarantees that the multifractal formalism does hold.
This situation is likely a by-product of the net interval construction, since our perspective is always with respect to images of the entire interval $[0,1]$.
However, there are also cases such as the Bernoulli measure associated with the IFS $\{x\mapsto \rho x,x\mapsto \rho x+(1-\rho)\}$ where $1/\rho$ is the Golden mean.
In this situation, the attractor is the entire interval $[0,1]$ so that the net interval construction is a natural choice.
Here, even though the $L^q$-spectrum contains a point of non-differentiability at some $q_0<0$ and contains a cycle not contained in the essential class, the measure still satisfies the multifractal formalism~\cite{fen2005}.
These phenomena, and other related special cases, are studied in recent work of Hare, Hare, and Shen~\cite{hhs2021}.

More work is needed to address the general case.
In \cite{rut2021a}, the author investigates the multifractal analysis of measures when the transition graph is finite to provide a more detailed understanding of such examples.
In particular, we obtain a greater understanding of the multifractal formalism outside the essential class as a continuation of our analysis here.

\subsection{Notational conventions}
We briefly mention here some of the conventions we use through out the document.
Given any set $X$, we write $\# X$ to denote the cardinality of $X$.
The set $\R$ is always the metric space equipped with the usual Euclidean metric.
The set $\N$ is the set of natural numbers beginning at $1$.
The set $B(x,t)$ is always a closed ball about $x$ with radius $t$, and $U(x,t)$ denotes the open ball.
Let $E,F\subseteq\R$ be Borel sets.
We denote by $\diam(E)=\sup\{|x-y|:x,y\in E\}$ and $\dist(E,F)=\inf\{|x-y|:x\in E,y\in F\}$.
Given $\delta>0$, we write $E^{(\delta)}=\{x\in\R:\dist(x,E)\leq\delta\}$.
By $E^\circ$, we mean the topological interior of $E$.

Boldface quantities are typically vectors.
If $M$ is a square matrix, we denote by $\spr(M)$ the spectral radius of $M$ and $\norm{M}=\sum_{i,j}|M_{i,j}|$ the matrix 1-norm.
If $\bm{v}$, $\bm{w}$ are vectors with the same dimension, we write $\bm{v}\preccurlyeq\bm{w}$ if $\bm{v}_i\leq\bm{w}_i$ for each $i$.
All matrices in this document are non-negative.

Given families of real numbers $(a_i)_{i\in I}$ and $(b_i)_{i\in I}$, we write $a_i\asymp b_i$ if there exist constants $c_1,c_2>0$ such that $c_1a_i\leq b_i\leq c_2a_i$ for all $i\in I$.

The maps $\{S_i\}_{i\in\mathcal{I}}$ always denotes an iterated function system.
We assume that $\#\mathcal{I}\geq 2$ and its attractor $K$ is not a singleton.
Sets denoted by $\Delta$ are closed intervals and often net intervals.
Indices $s,t$ are used to refer to generations and radii of open and closed balls.
Greek letters $\sigma,\tau,\omega,\phi,\xi$ typically refer to words in $\mathcal{I}^*$.
The Greek $\eta$ typically refers to a path in the transition graph.
The character $T$ refers to either a transition matrix or, more occasionally, a similarity map, depending on context.

\subsection{Acknowledgements}
The author would like to thank Kathryn Hare and Kevin Hare for their support and frequent discussions concerning many of the topics in this paper.
The author also thanks an anonymous referee for pointing out an error in a prior version of \cref{ti:multi-wsc}, and for comments which suggested the current version.

\section[Net Intervals]{Iterated function systems through net intervals}\label{s:net-iv}
\subsection{Iterated function systems of similarities in \texorpdfstring{$\R$}{R}}

Let $\mathcal{I}$ be a non-empty finite index set.
By an iterated function system of similarities (IFS) $\{S_{i}\}_{i\in\mathcal{I}}$ we mean a finite set of similarities 
\begin{equation}\label{e:ifs}
    S_{i}(x)=r_{i}x+d_{i}:\mathbb{R}\rightarrow \mathbb{R}\text{ for each } i\in\mathcal{I}
\end{equation}
with $0<\left\vert r_{i}\right\vert <1$.
We say that the IFS is \defn{(positive) equicontractive} if each $r_i=r>0$.

Each IFS generates a unique non-empty compact set $K$ satisfying 
\begin{equation*}
    K=\bigcup_{i\in\mathcal{I}}S_{i}(K).
\end{equation*}
This set $K$ is known as the associated \defn{self-similar set}.
Throughout, we will assume $K$ is not a singleton.
By rescaling and translating the $d_{i}$ if necessary, without loss of generality we may assume the convex hull of $K$ is $[0,1]$.

Given a probability vector $\bm{p}=(p_i)_{i\in\mathcal{I}}$ where $p_i>0$ and $\sum_{i\in\mathcal{I}}p_i=1$, there exists a unique Borel measure $\mus$ with $\supp\mus=K$ satisfying
\begin{equation}\label{e:minv}
    \mus(E) = \sum_{i\in\mathcal{I}}p_i\mus(S_i^{-1}(E))
\end{equation}
for any Borel set $E\subseteq K$.
This measure $\mus$ as known as an associated \defn{self-similar measure}.

Let $\mathcal{I}^*$ denote the set of all finite words on $\mathcal{I}$.
Given $\sigma =(\sigma_{1},\ldots ,\sigma_{j})\in \mathcal{I}^*$, we denote
\begin{equation*}
    \sigma^{-}=(\sigma_{1},\ldots ,\sigma_{j-1})\text{, }S_{\sigma }=S_{\sigma_{1}}\circ \cdots \circ S_{\sigma_{j}}\text{ and }r_{\sigma }=r_{\sigma_{1}}\cdots r_{\sigma_j}.
\end{equation*}
Given $t >0,$ put
\begin{equation*}
    \Lambda_{t}=\{\sigma \in \mathcal{I}^{\ast }:|r_{\sigma }|<t \leq |r_{\sigma^{-}}|\}.
\end{equation*}
We refer to the set of $\sigma \in \Lambda_{t}$ as the \defn{words of generation $t$}.
We remark that in the literature it is more common to see this defined by the rule $|r_{\sigma }|\leq t <|r_{\sigma^{-}}|$.
The two choices are essentially equivalent, but this choice is more convenient for our purposes.

\subsection{Neighbour sets}
The notions of net intervals and neighbour sets were introduced in \cite{fen2003} and \cite{hhs2018}.
In~\cite{hhr2021}, these notions were extended to an arbitrary IFS, and we present those definitions here.
We then continue the discussion to define the children of a net interval, and show in \cref{t:ttype} that the children depend only on the neighbour set of the parent.

Let $h_{1},\ldots ,h_{s(t)}$ be the collection of distinct elements of the set $\{S_{\sigma }(0),S_{\sigma }(1):\sigma \in \Lambda_{t}\}$ listed in strictly ascending order; we refer to this set as the \defn{endpoints of generation} $t$.
Set
\begin{equation*}
    \mathcal{F}_{t}=\{[h_{j},h_{j+1}]:1\leq j<s(t)\text{ and }(h_{j},h_{j+1})\cap K\neq \emptyset \}.
\end{equation*}
Elements of $\mathcal{F}_{t}$ are called \defn{net intervals of generation} $t$.
Write $\mathcal{F}=\bigcup_{t>0}\mathcal{F}_t$ to denote the set of all possible net intervals.

Suppose $\Delta \in \mathcal{F}$.
We denote by $T_{\Delta }$ the unique contraction $T_{\Delta }(x)=rx+a$ with $r>0$ such that 
\begin{equation*}
    T_{\Delta }([0,1])=\Delta.
\end{equation*}%
Of course, $r=\diam(\Delta)$ and $a$ is the left endpoint of $\Delta$.

\begin{definition}\label{d:nb}
    We will say that a similarity $f(x)=Rx+a$ is a \defn{neighbour} of $\Delta \in \mathcal{F}_{t}$ if there exists some $\sigma \in \Lambda_{t}$ such that $S_{\sigma }(K)\cap\Delta^\circ\neq\emptyset $ and $f=T_{\Delta }^{-1}\circ S_{\sigma }$.
    In this case, we also say that $S_{\sigma }$ \defn{generates} the neighbour $f$.
    The \defn{neighbour set} of $\Delta $ is the maximal set 
    \begin{equation*}
        \vs_t(\Delta )=\{f_{1},\ldots ,f_{m}\}
    \end{equation*}
    where each $f_{i}=T_{\Delta }^{-1}\circ S_{\sigma_{i}}$ is a distinct neighbour of $\Delta$.
\end{definition}
Since $K=\bigcup_{\sigma\in\Lambda_t}S_\sigma(K)$, every net interval has a non-empty neighbour set.

If $\sigma$ generates a neighbour of $\Delta$, then $S_\sigma([0,1])\supseteq\Delta$.
When the generation of $\Delta$ is implicit, we will simply write $\vs(\Delta)$.
For notational convenience, we define the quantity $\lm(\Delta)=\max\{|R|:\{x\mapsto Rx+a\}\in \vs(\Delta)\}$, which depends only on $\vs(\Delta)$.
\begin{remark}
    For an IFS of the form $\{S_i(x)=r x+d_i\}_{i\in\mathcal{I}}$ where $0<r<1$ is fixed, the notion of a neighbour set is related to the characteristic vector of Feng~\cite{fen2003}.
    We describe the equivalence here.

    Let $\Delta=[a,b]\in\mathcal{F}_t$ be some net interval and let $n$ be such that $r^n<t\leq r^{n-1}$.
    Let $\sigma_1,\ldots,\sigma_m$ generate distinct neighbours of $\Delta$, so that $r_{\sigma_i}=r^n$ for each $1\leq i\leq m$.
    Then the (reduced) characteristic vector of $\Delta$ (see~\cite[Sec.~2]{fen2003} for notation) is determined by
    \begin{align*}
        \ell_n(\Delta) &= r^{-n}\diam(\Delta) & V_n(\Delta) &= \{r^{-n}(a-S_{\sigma_i}(0)):1\leq i\leq m\}.
    \end{align*}
    whereas the neighbour set of $\Delta$ is given by
    \begin{align*}
        \vs(\Delta) &= \{T_\Delta^{-1}\circ S_{\sigma_i}\} = \{x\mapsto \frac{S_{\sigma_i}(x)-a}{\diam(\Delta)}\}\\
                    &= \{x\mapsto \frac{x}{r^{-n}\diam(\Delta)}+\frac{S_{\sigma_i}(0)-a}{\diam(\Delta)}\}.
    \end{align*}
    Thus, when the IFS has a common positive contraction ratio, our neighbour set construction can be interpreted directly as a normalized version of Feng's characteristic vector.

    When the IFS has arbitrary contraction ratios, there is no clear choice of normalization factor analogous to $\ell_n(\Delta)$ that is uniform across all net intervals $\Delta\in\mathcal{F}_t$.
    This issue is resolved by normalizing directly by $\diam(\Delta)$, but now it is no longer clear how to define the children of a net interval in a global way.
    Instead, a local definition for the children of net intervals, and the analogue of~\cite[Lem.~2.1]{fen2003}, are given in \cref{ss:ch-netiv}.
\end{remark}

Neighbour sets of net intervals are relevant in the sense that they completely determine the local geometry of $K$ in the net interval, as well as the behaviour of associated self-similar measures on Borel subsets of the net interval.
To be precise, we have the following lemma:
\begin{lemma}\label{l:nbdet}
    Let $\{S_i\}_{i\in\mathcal{I}}$ be an IFS as in \cref{e:ifs} with attractor $K$ and associated self-similar measure $\mus$.
    Suppose $\Delta_1,\Delta_2$ are net intervals with $\vs(\Delta_1)=\vs(\Delta_2)$.
    Then there exists a surjective similarity $g:\Delta_1\cap K\to\Delta_2\cap K$ and constants $c_1,c_2>0$ such that if $E\subseteq\Delta_1$ is any Borel set,
    \begin{equation*}
        c_1\mus(E)\leq \mus(g(E))\leq c_2\mus(E).
    \end{equation*}
\end{lemma}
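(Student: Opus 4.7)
I propose to take the similarity $g \coloneqq T_{\Delta_2} \circ T_{\Delta_1}^{-1}$ restricted to $\Delta_1 \cap K$. The first step is to verify $g(\Delta_1 \cap K) = \Delta_2 \cap K$. This rests on the local description
\begin{equation*}
    \Delta_j \cap K = T_{\Delta_j}\Bigl(\,\bigcup_{f \in \vs(\Delta_j)} f(K) \cap [0,1]\,\Bigr),
\end{equation*}
which exhibits $T_{\Delta_j}^{-1}(\Delta_j \cap K)$ as a function of $\vs(\Delta_j)$ alone. The inclusion $\supseteq$ is immediate from $T_{\Delta_j} \circ f = S_\sigma$ and $S_\sigma([0,1]) \supseteq \Delta_j$ for any $\sigma$ generating $f$. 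For the reverse inclusion, decompose $\Delta_j \cap K$ according to the words in $\Lambda_{t_j}$: non-neighbour words contribute only to $\partial \Delta_j$, and a case check at the two endpoints of $\Delta_j$ shows that any such boundary point lying in $K$ is also in the image of some neighbour of $\Delta_j$. Applying the description with $j = 1, 2$ and invoking $\vs(\Delta_1) = \vs(\Delta_2)$ gives the desired surjection.

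For the measure comparison, I would iterate the self-similarity relation \cref{e:minv} to the stopping time $\Lambda_{t_1}$ to obtain
\begin{equation*}
    \mus(E) = \sum_{\sigma \in \Lambda_{t_1}} p_\sigma\,\mus\bigl(S_\sigma^{-1}(E)\bigr),
\end{equation*}
where $p_\sigma \coloneqq p_{\sigma_1}\cdots p_{\sigma_n}$ for $\sigma = (\sigma_1, \ldots, \sigma_n)$. For $E \subseteq \Delta_1$, non-neighbour contributions are supported on $\partial \Delta_1$ and, by the boundary bookkeeping in step one, are absorbed into the neighbour contributions. Grouping words that generate the same neighbour $f$ yields
\begin{equation*}
    \mus(E) = \sum_{f \in \vs(\Delta_1)} P(f,\Delta_1)\,\mus\bigl(f^{-1}(T_{\Delta_1}^{-1}(E))\bigr),\quad P(f,\Delta_1) \coloneqq \sum_{\sigma \in \Lambda_{t_1}:\,T_{\Delta_1}^{-1} \circ S_\sigma = f} p_\sigma > 0.
\end{equation*}
Since $T_{\Delta_2}^{-1} \circ g = T_{\Delta_1}^{-1}$, the analogous identity for $\mus(g(E))$ involves the same quantities $\mus(f^{-1}(T_{\Delta_1}^{-1}(E)))$ weighted by $P(f, \Delta_2)$ and indexed over the common neighbour set. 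Setting $c_1 \coloneqq \min_f P(f,\Delta_2)/P(f,\Delta_1)$ and $c_2 \coloneqq \max_f P(f,\Delta_2)/P(f,\Delta_1)$, both strictly positive and finite because $\vs(\Delta_j)$ is finite and every $p_i > 0$, yields the asserted inequality.

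The main obstacle is the boundary bookkeeping: the endpoints of $\Delta_j$ can a priori be hit by $S_\sigma(K)$ for words $\sigma \in \Lambda_{t_j}$ that do not generate neighbours of $\Delta_j$, and one must confirm that any endpoint lying in $K$ is also in the image of some neighbour of $\Delta_j$, so that no mass at $\partial \Delta_j$ is lost in the decomposition. Because $\Delta_j$ is a net interval, at least one neighbour exists and its convex hull covers $\Delta_j$; this should let one trace the endpoints back into the image of such a neighbour, after which the rest of the argument reduces to matching coefficients of identical terms $\mus(f^{-1}(T_{\Delta_1}^{-1}(E)))$.
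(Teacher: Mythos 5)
Your proposal is correct and follows essentially the same route as the paper: the same similarity $g = T_{\Delta_2}\circ T_{\Delta_1}^{-1}$, the same decomposition of $\Delta\cap K$ over the neighbour set, and the same coefficient-matching with $c_1$ and $c_2$ the extreme ratios of the weights $P(f,\Delta_2)/P(f,\Delta_1)$ (your $c_2$ as a maximum is the correct form; the paper's stated $c_2$ contains a typo). The boundary bookkeeping you flag is a point the paper's proof silently elides; it is harmless for the measure inequality since $\mus$ is non-atomic, and your sketch for the surjectivity at endpoints is a reasonable way to fill that gap.
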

\begin{proof}
    By definition of the neighbour set, if $\Delta$ is any net interval, we have
    \begin{equation*}
        \Delta\cap K = \bigcup_{f\in\vs(\Delta)}\bigl(T_\Delta\circ f(K)\bigr)\cap\Delta.
    \end{equation*}
    Set $g=T_{\Delta_2}\circ T_{\Delta_1}^{-1}$ so that $g$ is clearly a similarity, and applying this observation to $\Delta_1$ and $\Delta_2$, we have
    \begin{align*}
        g(\Delta_1\cap K) &= \bigcup_{f\in\vs(\Delta_1)}g(T_{\Delta_1}\circ f(K)\cap\Delta_1) = \bigcup_{f\in\vs(\Delta_1)}\bigl(g\circ T_{\Delta_1}\circ f(K)\bigr)\cap g(\Delta_1)\\
                          &= \bigcup_{f\in\vs(\Delta_2)}\bigl(T_{\Delta_2}\circ f(K)\bigr)\cap\Delta_2 = \Delta_2\cap K.
    \end{align*}
    Thus $g$ is a surjective with the correct image.

    We now verify the measure property.
    By the invariant property of the self-similar measure \cref{e:minv}, if $\Delta\in\mathcal{F}_t$ is any net interval and $E\subseteq\Delta$ is any Borel set,
    \begin{align*}
        \mus(E) &= \sum_{\sigma\in\Lambda_{t}}p_\sigma\mus\circ S_{\sigma}^{-1}(E)= \sum_{f\in\vs(\Delta)}\mus\bigl(f^{-1}\circ T_{\Delta}^{-1}(E)\bigr)\sum_{\substack{\sigma\in\Lambda_t\\\sigma\text{ generates }f}}p_\sigma.
    \end{align*}
    Since $f$ is a neighbour of $\Delta$, there is at least one $\sigma$ generating $f$.
    In particular, say $\Delta_1\in\mathcal{F}_{t_1}$ and $\Delta_2\in\mathcal{F}_{t_2}$, write $\vs(\Delta_1)=\vs(\Delta_2)=\{f_1,\ldots,f_m\}$, and set for each $1\leq i\leq m$ and $j=1,2$
    \begin{equation*}
        q_{i,j} \coloneqq  \sum_{\substack{\sigma\in\Lambda_{t_j}\\\sigma\text{ generates }f_i}}p_\sigma>0.
    \end{equation*}
    Set $c_1 = \min\{q_{i,2}/q_{i,1}:1\leq i\leq m\}$.
    We then have for $E\subseteq\Delta_1$ that $g(E)\subseteq\Delta_2$ so that
    \begin{align*}
        \mus(g(E)) &= \sum_{i=1}^m\mus\bigl(f_i^{-1}\circ T_{\Delta_2}^{-1}\circ g(E)\bigr)q_{i,2}\\
                   &\geq c_1\sum_{i=1}^m \mus\bigl(f_i^{-1}\circ T_{\Delta_1}^{-1}(E)\bigr) q_{i,1}= c_1\mus(E).
    \end{align*}
    Similarly, we have $\mus(g(E))\leq c_2\mus(E)$ where $c_2=\min\{q_{i,1}/q_{i,2}:1\leq i\leq m\}$.
\end{proof}
We will revisit these ideas in \cref{ss:tg-im}.

\subsection{Children of net intervals}\label{ss:ch-netiv}
Let $\Delta\in\mathcal{F}$ have neighbour set $\{f_1,\ldots,f_m\}$, and for each $i$, let $S_{\sigma_i}$ generate the neighbour $f_i$ (recall that this means that $S_{\sigma_i}(K)\cap\Delta^\circ\neq\emptyset$ and $f_i=T_{\Delta}^{-1}\circ S_{\sigma_i}$).
\begin{definition}\label{d:tg}
    We define the \defn{ancestral generation} of $\Delta$, denoted $\ag(\Delta)$, and the \defn{transition generation} of $\Delta$, denoted $\tg(\Delta)$, to be positive real values such that
    \begin{equation*}
       \bigcap_{i=1}^m (|r_{\sigma_i}|,|r_{\sigma_i^-}|]=(\tg(\Delta),\ag(\Delta)].
    \end{equation*}
\end{definition}

Note that $0<\tg(\Delta)\leq 1$; if $\Delta=[0,1]$, we say $\ag(\Delta)=\infty$.
It is straightforward to verify that
\begin{itemize}[nl]
    \item $\tg(\Delta)=\lm(\Delta)\cdot\diam(\Delta)$,
    \item $t\in (\tg(\Delta),\ag(\Delta)]$,
    \item for any $s\in (\tg(\Delta),\ag(\Delta)]$, $\Delta\in\mathcal{F}_{s}$ and $\vs_{s}(\Delta)=\vs_t(\Delta)$, and
    \item if $s\notin (\tg(\Delta),\ag(\Delta)]$, either $\Delta\notin\mathcal{F}_{s}$ or $\vs_{s}(\Delta)\neq \vs_t(\Delta)$.
\end{itemize}

Let $t>0$ and $\Delta\in\mathcal{F}_t$.
Let $(\Delta_1,\ldots,\Delta_n)\in\mathcal{F}_{\tg(\Delta)}$ be the distinct net intervals, ordered from left to right, of generation $\tg(\Delta)$ contained in $\Delta$.
Note that either $n>1$ or if $n=1$, then $\vs(\Delta)\neq \vs(\Delta_1)$.
Then we call the tuple $(\Delta_1,\ldots,\Delta_n)$ the \defn{children} of $\Delta\in\mathcal{F}_t$.
Note that for any child $\Delta_i$ of $\Delta$, $\ag(\Delta_i)=\tg(\Delta)$.

Similarly, we define the \defn{parent} of $\Delta\in\mathcal{F}_t$ to be the net interval $\widehat\Delta\in\mathcal{F}_{s}$ with $s > t$ where $\Delta$ is a child of $\widehat\Delta$.
\begin{remark}
    One way to think about the children of a net interval is as follows.
    Enumerate the points $\left\{\prod_{i\in\mathcal{I}}|r_i^{a_i}|:a_i\in\{0\}\cup\N\right\}$ in decreasing order $(t_i)_{i=1}^\infty$.
    Since $\tg(\Delta)=|r_\sigma|$ for some $\sigma\in\mathcal{I}^*$, the transitions to new generations must happen at some $t_i$.
    However, if $\Delta\in\mathcal{F}_{t_k}$, it may not hold that $\tg(\Delta)=t_{k+1}$.
    The children are the net intervals in generation $t_{m}$ where $m\geq k+1$ is minimal such that either $\Delta\notin\mathcal{F}_{t_m}$ or $\vs_{t_m}(\Delta)\neq \vs_{t_k}(\Delta)$.
    
    If the IFS is of the form $\{x\mapsto rx+d_i\}_{i\in\mathcal{I}}$ for some fixed $0<r<1$ and $\Delta\in\mathcal{F}_{r^n}$, then $\tg(\Delta)=r^{n+1}$.
\end{remark}
\begin{example}
    For a worked example of neighbour set and children computations of a non-commensurable IFS, see \cref{ss:ifs-noncomm}.
\end{example}

A key feature of the preceding definitions is that, in a sense that will be made precise, the neighbour set of some net interval $\Delta\in\mathcal{F}_\alpha$ completely determines the placement and the neighbour set of each child of the net interval.
\begin{definition}\label{d:pos-idx}
    Suppose $\Delta=[a,b]\in\mathcal{F}$ has children $(\Delta_1,\ldots,\Delta_n)$ in generation $\tg(\Delta)$.
    For some fixed child $\Delta_i=[a_i,b_i]$, we define the \defn{position index} $q(\Delta,\Delta_i)=(a_i-a)/\diam(\Delta)$.
\end{definition}
One purpose of the position index is to distinguish the children of $\Delta$ which have the same neighbour set.

We have the following basic result.
The insight behind this result is straightforward.
The children of a net interval are determined precisely by the words which generate the neighbours of maximal length.
Up to normalization by the position of $\Delta$, these correspond uniquely to the neighbours of $\Delta$ with maximal contraction factor.
\begin{theorem}\label{t:ttype}
    Let $\{S_i\}_{i\in\mathcal{I}}$ be an arbitrary IFS.
    Let $\Delta\in\mathcal{F}_t$ have children $(\Delta_1,\ldots,\Delta_{n})$ in $\mathcal{F}_{\tg(\Delta)}$.
    Then for any $\Delta'\in\mathcal{F}_s$ with $\vs(\Delta)=\vs(\Delta')$ and children $(\Delta_1',\ldots,\Delta_{n'}')$ in $\mathcal{F}_{\tg(\Delta')}$, we have that $n=n'$ and for any $1\leq i\leq n$,
    \begin{enumerate}[nl,r]
        \item $\vs(\Delta_i')=\vs(\Delta_i)$,
        \item $q(\Delta',\Delta_i')=q(\Delta,\Delta_i)$,
        \item $\frac{\diam(\Delta_i')}{\diam(\Delta')}=\frac{\diam(\Delta_i)}{\diam(\Delta)}$, and
        \item $\frac{\tg(\Delta_i)}{\tg(\Delta)}=\frac{\tg(\Delta_i')}{\tg(\Delta_i)}$.
    \end{enumerate}
\end{theorem}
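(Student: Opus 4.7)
The plan is to transfer the question into normalized coordinates via $T_\Delta^{-1}$ and show that the resulting structure inside $[0,1]$ depends only on $\vs(\Delta)$. Since all four conclusions involve ratios in which $\diam(\Delta)$ or $\tg(\Delta)$ cancels, it suffices to reconstruct the tuple of normalized children $T_\Delta^{-1}(\Delta_i) \subseteq [0,1]$ together with their neighbour sets, in an intrinsic way, from the data of $\vs(\Delta)$ alone; the conclusions then transfer to $\Delta'$ by conjugating through $T_{\Delta'}\circ T_\Delta^{-1}$.

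First I would describe all word-level generators of neighbours at generation $\tg(\Delta)$. Fix any generators $\sigma_1,\ldots,\sigma_m$ of $\vs(\Delta)=\{f_1,\ldots,f_m\}$ with $f_i(x)=R_i x + a_i$, and recall that $\tg(\Delta)=\lm(\Delta)\cdot\diam(\Delta)=\max_i |r_{\sigma_i}|$. The key claim is that any $\tau\in\Lambda_{\tg(\Delta)}$ with $S_\tau(K)\cap\Delta^\circ\neq\emptyset$ satisfies either (i) $S_\tau=S_{\sigma_i}$ for some $i$ with $|R_i|<\lm(\Delta)$, or (ii) $S_\tau=S_{\sigma_i j}$ for some $i$ with $|R_i|=\lm(\Delta)$ and some $j\in\mathcal{I}$. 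To see this, take the unique shortest prefix $\tau'$ of $\tau$ with $|r_{\tau'}|<t$: then $\tau'\in\Lambda_t$ and $S_{\tau'}(K)\supseteq S_\tau(K)$ meets $\Delta^\circ$, forcing $T_\Delta^{-1}\circ S_{\tau'}=f_i$ for some $i$. A short calculation using $|r_{\tau^-}|\geq\tg(\Delta)$ and $|r_{\tau'}|\cdot|r_j|<\tg(\Delta)$ for any single letter $j\in\mathcal{I}$ then shows $\tau$ extends $\tau'$ by at most one letter, giving (i) or (ii). Normalizing by $T_\Delta^{-1}$, the collection of normalized generators at generation $\tg(\Delta)$ is
\begin{equation*}
    \mathcal{G}(\Delta) = \{f_i : |R_i|<\lm(\Delta)\} \;\cup\; \{f_i\circ S_j : |R_i|=\lm(\Delta),\; j\in\mathcal{I},\; f_i\circ S_j(K)\cap(0,1)\neq\emptyset\},
\end{equation*}
and $\mathcal{G}(\Delta)$ is manifestly determined by $\vs(\Delta)$.

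Given $\mathcal{G}(\Delta)$, I would reconstruct the normalized children $T_\Delta^{-1}(\Delta_k)\subseteq[0,1]$ by listing the points $\{g(0),g(1):g\in\mathcal{G}(\Delta)\}\cup\{0,1\}$ in increasing order, forming consecutive subintervals, and keeping only those whose interior intersects $\bigcup_{g\in\mathcal{G}(\Delta)} g(K)$—again a condition intrinsic to $\vs(\Delta)$. The normalized neighbour set of each surviving subinterval is then the set of $g\in\mathcal{G}(\Delta)$ whose image meets its interior. This simultaneously yields the equality $n=n'$, together with (1), (2), and (3). For (4), use $\tg(\Delta_i)=\lm(\Delta_i)\cdot\diam(\Delta_i)$: by (1) $\lm(\Delta_i)=\lm(\Delta_i')$, and by (3) the diameter ratios match, so $\tg(\Delta_i)/\tg(\Delta)=\tg(\Delta_i')/\tg(\Delta')$.

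The main technical care is bookkeeping around the fact that distinct words may generate the same neighbour (as occurs under exact overlaps), so the word-level generator list $\{\sigma_i\}$ is not canonically attached to $\Delta$. The argument above is arranged so that $\mathcal{G}(\Delta)$, the endpoint list, and the neighbour sets of the children are all expressed purely in terms of the normalized maps $f_i\in\vs(\Delta)$ and their one-letter extensions $f_i\circ S_j$, so any accidental choice of word-level generators is collapsed before the comparison with $\Delta'$ is ever made.
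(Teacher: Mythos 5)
Your proposal is correct and follows essentially the same route as the paper's proof: identify the generation-$\tg(\Delta)$ generators intrinsically from $\vs(\Delta)$ as the non-maximal neighbours together with the one-letter extensions of the maximal ones, reconstruct the children's endpoints and neighbour sets from this data (using that $\bigcup_{f\in\vs(\Delta)}T_\Delta\circ f(K)$ recovers $\Delta\cap K$ to decide which gaps survive), and transfer everything through the similarity $T_{\Delta'}\circ T_\Delta^{-1}$, with (iv) following from $\tg=\lm\cdot\diam$. The one substantive addition is that you actually prove the one-letter-descendant claim via the shortest-prefix argument, which the paper only asserts as an observation.
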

\begin{proof}
    Given a map $f(x)=rx+d$, we set $R(f)=|r|$.

    Write $\vs(\Delta')=\vs(\Delta)=\{f_1,\ldots,f_m\}$, and let
    \begin{align*}
        \mathcal{W}' &= \{T_{\Delta'}\circ f_i:R(f_i)=\lm(\Delta'),1\leq i\leq m\}\\
        \mathcal{W} &= \{T_{\Delta}\circ f_i:R(f_i)=\lm(\Delta),1\leq i\leq m\}
    \end{align*}
    denote the corresponding sets of neighbours corresponding to functions with maximal contraction factor, where $\lm(\Delta')=\lm(\Delta)$.
    Then let
    \begin{align*}
        \mathcal{C}' &= \bigl\{S_\tau:\tau\in\Lambda_{\tg(\Delta')},S_\tau(K)\cap (\Delta')^\circ\neq\emptyset\bigr\}\\
        \mathcal{C} &= \bigl\{S_\tau:\tau\in\Lambda_{\tg(\Delta)},S_\tau(K)\cap\Delta^\circ\neq\emptyset\bigr\}.
    \end{align*}
    In other words, $\mathcal{C}$ is the set of words of generation $\tg(\Delta)$ which contribute to some child of $\Delta$, and similarly for $\Delta'$.
    Using the observation that the only new words are those which are one-level descendants of those which generate neighbours of maximal length, we have
    \begin{align}
        \mathcal{C} &= \{f\circ S_j:f\in \mathcal{W},f\circ S_j(K)\cap\Delta^\circ\neq\emptyset\}\cup\{T_{\Delta}^{-1}\circ f_i:R(f_i)\neq \lm(\Delta)\}\nonumber\\
                    &= \{T_{\Delta}\circ T_{\Delta'}^{-1}\circ f:f\in \mathcal{C}'\}.\label{e:c-cprime}
    \end{align}
    Note that, in the above set of equalities, we use the fact that for $f\in\mathcal{W}$
    \begin{align*}
        f\circ S_j(K)\cap\Delta^\circ\neq\emptyset &\iff T_\Delta^{-1}\circ f\circ S_j(K)\cap (0,1)\neq\emptyset\\
                                                   &\iff T_{\Delta'}\circ T_\Delta^{-1}\circ f\circ S_j(K)\cap(\Delta')^\circ\neq\emptyset
    \end{align*}
    where $T_{\Delta'}\circ T_\Delta^{-1}\circ f\in \mathcal{W}'$.

    Write $\Delta=[a,b]$ and $\Delta'=[a',b']$.
    Now consider the set $H=\{a,b\}\cup \{f(0),f(1):f\in \mathcal{C}\}\cap\Delta$ so that $H$ is the set of all endpoints of generation $\tg(\Delta)$ contained in $\Delta$.
    Then if $H'=\{a',b'\}\cup \{f(0),f(1):f\in \mathcal{C}'\}\cap\Delta'$, we observe by \cref{e:c-cprime} that $T_{\Delta'}^{-1}(H')=T_\Delta^{-1}(H)$.
    Let $a=h_1<\cdots<h_{k+1}=b$ denote the ordered elements of $H$ and $a'=h_1'<\cdots<h_{k+1}'=b'$ the ordered elements of $H'$ where $k=|H|-1=|H'|-1$.
    By \cref{l:nbdet}, $(h_i,h_{i+1})\cap K\neq\emptyset$ if and only if $(h_i',h_{i+1}')\cap K\neq\emptyset$.
    Thus the children of $\Delta$ are $\{[h_i,h_{i+1}]:(h_i,h_{i+1})\cap K\neq\emptyset\}$ and the children of $\Delta'$ are $\{T_{\Delta'}\circ T_\Delta^{-1}([h_i,h_{i+1}]):(h_i,h_{i+1})\cap K\neq\emptyset\}$, so $k=n=n'$.

    Now fix some $1\leq i\leq n$.
    Note that $T_\Delta\circ T_{\Delta'}^{-1}(\Delta_i')=\Delta_i$ so that $T_{\Delta_i}^{-1}\circ T_\Delta\circ T_{\Delta'}^{-1}=T_{\Delta_i'}^{-1}$.
    \begin{enumerate}[nl,r]
        \item By direct computation,
            \begin{align*}
                \vs(\Delta_i) &= \{T_{\Delta_i}^{-1}\circ f:f\in\mathcal{C},f(K)\cap\Delta_i^\circ\neq\emptyset\}\\
                              &= \{T_{\Delta_i}^{-1}\circ T_{\Delta}\circ T_{\Delta'}^{-1}\circ f:f\in\mathcal{C}',T_{\Delta}\circ T_{\Delta'}^{-1}\circ f(K)\cap\bigl(T_\Delta\circ T_{\Delta'}^{-1}(\Delta_i')\bigr)^\circ\neq\emptyset\}\\
                              &= \{T_{\Delta_i'}^{-1}\circ f:f\in\mathcal{C}',f(K)\cap(\Delta_i')^\circ\neq\emptyset\}=\vs(\Delta_i')
            \end{align*}
        \item Since the $T_{\Delta}$ are isometries, $q(\Delta,\Delta_i) = \frac{h_i-h_1}{\diam(\Delta)}=T_{\Delta}^{-1}(h_i)$ since $T_{\Delta}^{-1}(h_1)=0$.
            Then the result follows since $T_{\Delta}^{-1}(h_i)=T_{\Delta'}^{-1}(h_i')$.
        \item We have
            \begin{equation*}
                \frac{\diam(\Delta_i)}{\diam(\Delta)}=\diam(T_\Delta^{-1}(\Delta_i))=\diam(T_{\Delta'}^{-1}(\Delta_i'))=\frac{\diam(\Delta_i')}{\diam(\Delta')}
            \end{equation*}
        \item Recall that for an arbitrary net interval, $\tg(\Delta_0) = \lm(\Delta_0)\cdot\diam(\Delta_0)$ where $\lm(\Delta_0)$ depends only on $\vs(\Delta_0)$.
            Apply (i) and (iii).
    \end{enumerate}
    We thus have the desired result.
\end{proof}

\subsection{The transition graph of an iterated function system}\label{ss:tgraph}
In the context of \cref{t:ttype}, to understand the behaviour of the IFS, it is in a sense sufficient to track the behaviour of the neighbour sets.
Thus, we construct the \defn{transition graph} of the IFS.
The transition graph is a directed graph $\mathcal{G}(\{S_i\}_{i\in\mathcal{I}})$, possibly with loops and multiple edges, (denoted by $\mathcal{G}$ when the IFS is clear from the context) defined as follows.
The vertex set of $\mathcal{G}$, denoted $V(\mathcal{G})$, is $\{\vs(\Delta):\Delta\in\mathcal{F}\}$, the set of distinct neighbour sets.
The edge set of $\mathcal{G}$, denoted $E(\mathcal{G})$, is a set of triples $(v_1,v_2,q)$ where $v_1$ is the source vertex, $v_2$ is the target vertex, and $q$ is the edge label to distinguish multiple edges.
The edges are given as follows: for each net interval $\Delta\in\mathcal{F}_t$ with children $(\Delta_1,\ldots,\Delta_m)$ and for each $i$, we introduce an edge $e=(\vs_t(\Delta),\vs_{\tg(\Delta)}(\Delta_i),q(\Delta,\Delta_i))$.
By \cref{t:ttype}, this construction is well-defined since it depends only on the neighbour set of $\Delta$.

An \defn{(admissible) path} $\eta$ in $\mathcal{G}$ is a sequence of edges $\eta=(e_1,\ldots,e_n)$ in $\mathcal{G}$ where the target of $e_i$ is the source of $e_{i+1}$.
A path in $\mathcal{G}$ is a \defn{cycle} if the path begins and ends at the same vertex.

We can encode the behaviour of the IFS symbolically using the transition graph.
Given $\Delta\in\mathcal{F}_t$, consider the sequence $(\Delta_0,\ldots,\Delta_n)$ where $\Delta_0=[0,1]$, $\Delta_n=\Delta$, and each $\Delta_i$ is a child of $\Delta_{i-1}$.
Then the \defn{symbolic representation} of $\Delta$ is the path $\eta=(e_1,\ldots,e_{n})$ of $G$ where for each $1\leq i\leq n$
\begin{equation*}
    e_i=\bigl(\vs(\Delta_{i-1}),\vs(\Delta_i),q(\Delta_{i-1},\Delta_i)\bigr).
\end{equation*}
Conversely, if $\eta$ is any admissible path, we say that $(\Delta_i)_{i=0}^k$ is a \defn{(net interval) realization} of $\eta$ if
\begin{itemize}[nl]
    \item each $\Delta_i$ is a child of $\Delta_{i-1}$, and
    \item each $e_i=(\vs(\Delta_{i-1}),\vs(\Delta_i),q(\Delta_{i-1},\Delta_i))$.
\end{itemize}
By construction, every admissible path has a net interval realization.

Now let $x\in K$ be arbitrary and let $(\Delta_i)_{i=0}^\infty$ be a sequence of nested intervals where $\Delta_0=[0,1]$ and $\Delta_{i+1}$ a child of $\Delta_i$ and $\{x\}=\bigcap_{i=1}^\infty\Delta_i$.
The symbolic representation of $x$ corresponding to sequence $(\Delta_i)_{i=0}^\infty$ is the infinite path $(e_i)_{i=1}^\infty$ where for each $n$, $(e_1,\ldots,e_n)$ is the symbolic representation of $\Delta_n$.
The symbolic representation uniquely determines $x$, but if $x$ is an endpoint of some net interval, it can happen that there are two distinct symbolic representations.

Suppose $\{S_i\}_{i\in\mathcal{I}}$ is of the form $\{x\mapsto r x+d_i\}_{i\in\mathcal{I}}$ where $0<r<1$.
Then if $\Delta\in\mathcal{F}_t$ is any net interval with symbolic representation $\eta=(e_1,\ldots,e_n)$, $\tg(\Delta)=r^n$ and $r^n< t\leq r^{n-1}$.
In other words, given the symbolic representation, we can approximate the generation of $\Delta$.

However, when the IFS is not of this form, paths with the same length can result in net intervals in substantially different generations, and if the contraction ratios are not logarithmically commensurable (i.e. $\log r_i/\log r_j\in\Q$ for any $i,j\in\mathcal{I}$), there is no way to resolve this in a uniform way.
Thus in order to approximate the change in generation along a path in the transition graph, it is necessary to assign distinct values to the edges in the transition graph.

\begin{definition}\label{d:e-len}
    Let $\mathcal{G}$ be the transition graph of an IFS.
    We define the \defn{edge length function} $L:E(\mathcal{G})\to(0,1)$ as follows.
    For a particular edge $e$, let the source and target be given by $v_1$ and $v_2$, where $v_i=\vs(\Delta_i)$ for some $\Delta_1$ the parent of $\Delta_2$, and define $L(e)=\tg(\Delta_2)/\tg(\Delta_1)$.
\end{definition}
This function is well-defined by \cref{t:ttype}.
When $\eta=(e_1,\ldots,e_n)$ is an admissible path, we say $L(\eta)=L(e_1)\cdots L(e_n)$.
\begin{remark}
    If $\{S_i\}_{i\in\mathcal{I}}$ is of the form $\{x\mapsto r x+d_i\}_{i\in\mathcal{I}}$ where $0<r<1$, then $L(e)=r$ for any edge $e\in E(\mathcal{G})$.
\end{remark}
The main point here is that if $\Delta\in\mathcal{F}_t$ is any net interval with symbolic representation $\eta$, then $L(\eta)\asymp t$ with constants of comparability not depending on $\Delta$.
While the above choice of the length for an edge is not unique with this property, a straightforward argument shows that any such function must agree with $L$ on any cycle.

\subsection{Encoding the invariant measure by the transition graph}\label{ss:tg-im}
Given an IFS $\{S_i\}_{i\in\mathcal{I}}$ with a corresponding invariant measure $\mus$, we are interested in formulas for computing or approximating $\mus(E)$ where $E\subseteq K$ is an arbitrary Borel set.
When $\{S_i\}_{i\in\mathcal{I}}$ satisfies the strong separation condition (that is, for $i\neq j$, $S_i(K)$ and $S_j(K)$ are disjoint), this is straightforward since $\mus(S_\sigma(K))=p_\sigma$.
However, when images of $K$ overlap, such a formula no longer holds.

The net interval construction can be thought of as a way of converting the behaviour of the IFS on overlapping images of $K$ into behaviour on net intervals, which are disjoint except on a countable set (which has $\mus$-measure 0).
It turns out that one may also encode the dynamics of the invariant measure $\mus$ using products of matrices.
This technique was developed in the equicontractive case for IFS of the form $\{x\mapsto rx+d_i\}_{i\in\mathcal{I}}$ with $0<r<1$ by Feng~\cite{fen2003}, and extended to IFS which satisfy the finite type condition~\cite{hhs2018}.
Using similar techniques, we describe here how to generalize this construction to an arbitrary IFS.

Let $\{S_i\}_{i\in\mathcal{I}}$ be an IFS and $\mus$ the self similar measure associated to probabilities $\{p_i\}_{i\in\mathcal{I}}$.
The main mechanism to compute the approximate measure of net intervals is through transition matrices.
Recall that $\mathcal{G}$ has vertex set $V(\mathcal{G})=\{\vs(\Delta):\Delta\in\mathcal{F}\}$.
Fix some total ordering on the set of all neighbours $\{f:f\in\vs(\Delta),\Delta\in\mathcal{F}\}$.

Let $e\in E(\mathcal{G})$ be a fixed edge with source $v_1$ and target $v_2$.
Suppose $\Delta_1\supseteq\Delta_2$ are net intervals such that $\Delta_1$ is the parent of $\Delta_2$ and $e=(\vs(\Delta_1),\vs(\Delta_2),q(\Delta_1,\Delta_2))$.
Suppose the neighbour sets are given by $\vs(\Delta_1)=\{f_1,\ldots,f_m\}$ and $\vs(\Delta_2)=\{g_1,\ldots,g_{n}\}$ where $f_1<\cdots<f_m$ and $g_1<\cdots<g_{n}$.
We then define the \defn{transition matrix} $T(e)$ as the non-negative $m\times n$ matrix given by
\begin{equation}\label{e:trmat}
    T(e)_{i,j}=\frac{\mus(g_j^{-1}((0,1))}{\mus(f_i^{-1}((0,1))}\cdot p_\ell
\end{equation}
if there exists an index $\ell\in\mathcal{I}$ such that $f_i$ is generated by $\sigma$ and $g_j$ is generated by $\sigma \ell$; otherwise, set $T(e)_{i,j}=0$.
This is well-defined since a neighbour $f$ has $f^{-1}((0,1))\cap K\neq\emptyset$ by definition.
Recall that if $\sigma'$ generates any neighbour of $\Delta_2$, then necessarily $\sigma'=\sigma \ell$ for some $\sigma$ which generates a neighbour of $\Delta_1$; thus, every column of $T(e)$ has a positive entry.
However, it may not hold that each row of $T(e)$ has a positive entry.

It is clear from \cref{t:ttype} that this definition depends only on the edge $e$.
If $\eta=(e_1,\ldots,e_n)$ is an admissible path, we define $T(\eta)=T(e_1)\cdots T(e_n)$.
\begin{example}
    See \cref{ss:ifs-noncomm} and \cref{f:tgraph-m} for a complete transition graph example.
\end{example}

Throughout, we will denote by $\norm{T}=\sum_{i,j}|T_{ij}|$ to denote the matrix $1$-norm.
Suppose $\Delta\in\mathcal{F}_t$ is an arbitrary net interval.
From the defining identity of the self-similar measure,
\begin{equation*}
    \mus(\Delta)=\sum_{\sigma\in\Lambda_t}p_\sigma\mus(S_\sigma^{-1}(\Delta))
\end{equation*}
where, since $\mus$ is non-atomic, the summation may be taken over $\sigma$ such that $S_\sigma^{-1}(\Delta^\circ)\cap K$ is non-empty.
Note that $S_\sigma^{-1}(\Delta^\circ)=S_\sigma^{-1}\circ T_\Delta((0,1))=f^{-1}((0,1))$ where $f\in \vs(\Delta)$.
We thus have
\begin{equation}\label{e:mu-f}
    \mus(\Delta) = \sum_{f\in \vs(\Delta)}\mus(f^{-1}((0,1)))\sum_{\substack{\sigma\in\Lambda_t\\\sigma\text{ generates }f}}p_\sigma.
\end{equation}
Let $\vs(\Delta)=\{f_1,\ldots,f_m\}$ with $f_1<\cdots<f_m$; then, we denote the \defn{vector form} of $\mus$ by $\muv(\Delta)=(q_1,\ldots,q_m)$ where
\begin{equation*}
    q_i = \mus(f_i^{-1}((0,1)))\sum_{\substack{\sigma\in\Lambda_t\\\sigma\text{ generates }f_i}}p_\sigma.
\end{equation*}
In particular, $\muv(\Delta)$ is a strictly positive vector for any $\Delta$, and $\mus(\Delta)=\norm{\muv(\Delta)}$.

With this notation, we have the following theorem:
\begin{theorem}\label{t:approx}
    Let $\{S_i\}_{i\in\mathcal{I}}$ have associated self-similar measure $\mus$.
    If $\eta$ is any admissible path realized by $(\Delta_i)_{i=0}^m$,
    \begin{equation*}
        \muv(\Delta_0)T(\eta)=\muv(\Delta_m).
    \end{equation*}
\end{theorem}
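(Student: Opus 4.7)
The plan is to proceed by induction on the length $m$ of the path $\eta$. The base case $m=0$ is immediate: $T(\eta)$ is the empty product (the identity matrix) and $\Delta_0=\Delta_m$, so both sides equal $\muv(\Delta_0)$. For the inductive step, set $\eta'=(e_1,\ldots,e_{m-1})$ with realisation $(\Delta_0,\ldots,\Delta_{m-1})$; the inductive hypothesis gives $\muv(\Delta_0)T(\eta')=\muv(\Delta_{m-1})$, hence
\[
    \muv(\Delta_0)T(\eta)=\muv(\Delta_{m-1})T(e_m).
\]
Everything therefore reduces to the single-edge identity $\muv(\Delta)T(e)=\muv(\Delta')$ whenever $\Delta'$ is a child of $\Delta$ along an edge $e$.

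For the single-edge case I would enumerate $\vs(\Delta)=\{f_1,\ldots,f_m\}$ and $\vs(\Delta')=\{g_1,\ldots,g_n\}$ in the fixed ordering and expand both sides. Plugging the definitions of $\muv$ and $T(e)$ into the $j$-th component of $\muv(\Delta)T(e)$, the factor $\mus(f_i^{-1}((0,1)))$ cancels, leaving
\[
    (\muv(\Delta)T(e))_j=\mus(g_j^{-1}((0,1)))\sum_{i}\sum_{\substack{\sigma\in\Lambda_t\\ \sigma\text{ generates }f_i}}p_\sigma\,p_{\ell(i,j)},
\]
where $\ell(i,j)\in\mathcal{I}$ is the letter supplied by the defining condition of $T(e)_{i,j}$, with the inner sum restricted to indices $i$ for which $T(e)_{i,j}\neq 0$. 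Using the multiplicativity $p_\sigma p_{\ell(i,j)}=p_{\sigma\ell(i,j)}$, the identity reduces to the combinatorial claim that the map $(i,\sigma)\mapsto\sigma\ell(i,j)$ is a bijection onto the set $\{\sigma'\in\Lambda_{\tg(\Delta)}:\sigma'\text{ generates }g_j\}$.

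The hard part will be establishing this bijection. Given any $\sigma'$ in the target set, the natural candidate preimage is $\sigma=(\sigma')^-$ together with the final letter $\ell$ of $\sigma'$. Two things must be verified. First, that $\sigma\in\Lambda_t$ and generates some $f_i\in\vs(\Delta)$: the inclusion $S_\sigma(K)\supseteq S_{\sigma'}(K)$ combined with $S_{\sigma'}(K)\cap(\Delta')^\circ\neq\emptyset$ and $(\Delta')^\circ\subseteq\Delta^\circ$ immediately gives $S_\sigma(K)\cap\Delta^\circ\neq\emptyset$, while the precise characterisation $\tg(\Delta)=\max_i|r_{\sigma_i}|$ coming from the definition of the transition generation is what forces the single-letter extension structure placing $\sigma$ in $\Lambda_t$. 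Second, that the recovered letter $\ell$ coincides with $\ell(i,j)$: composing $g_j=T_{\Delta'}^{-1}\circ S_\sigma\circ S_\ell$ with the inverse of $f_i=T_\Delta^{-1}\circ S_\sigma$ yields $g_j=T_{\Delta'}^{-1}\circ T_\Delta\circ f_i\circ S_\ell$, which is exactly the relation defining a non-zero entry of $T(e)_{i,j}$ with letter $\ell$. Once the bijection is in place, the two sums agree term by term, closing the induction.
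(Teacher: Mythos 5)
Your overall strategy---induction on the number of edges, reducing everything to the single-edge identity $\muv(\Delta)T(e)=\muv(\Delta')$ and then to a combinatorial matching of words---is in substance the same as the paper's, which carries out the identical cancellation in one global computation by introducing the sets $\mathcal{A}_{ij}=\{\omega:\tau_i\omega\in\Lambda_s,\ \tau_i\omega\text{ generates }g_j\}$; the difference is purely organisational.

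However, your bijection step has a genuine gap. You claim that every $\sigma'\in\Lambda_{\tg(\Delta)}$ generating $g_j$ arises as $\sigma\ell$ with $\sigma=(\sigma')^-\in\Lambda_t$ and $\ell$ the last letter of $\sigma'$, asserting that the definition of the transition generation ``forces the single-letter extension structure''. This fails whenever the neighbours of $\Delta$ do not all have the same contraction ratio. If $\sigma\in\Lambda_t$ generates a neighbour $f_i$ with $|r_\sigma|<\tg(\Delta)=\max_i|r_{\sigma_i}|$, then $\sigma$ itself already lies in $\Lambda_{\tg(\Delta)}$ (since $|r_\sigma|<\tg(\Delta)$ and $\tg(\Delta)\leq\ag(\Delta)\leq|r_{\sigma^-}|$) and may generate a neighbour of the child $\Delta'$ with \emph{no} letter appended; for such a word, $(\sigma')^-=\sigma^-$ has $|r_{\sigma^-}|\geq\ag(\Delta)\geq t$ and so is \emph{not} in $\Lambda_t$, and your candidate preimage does not exist. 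Concretely, in the worked example of \cref{ss:ifs-noncomm} the entry $1=p_\emptyset$ of $T^*(e_{10})$ comes from exactly such a persisting word. The correct combinatorial statement---which the paper's proof encodes by letting $\omega$ range over all of $\mathcal{I}^*$, \emph{including the empty word}, in the definition of $\mathcal{A}_{ij}$---is that a word of generation $\tg(\Delta)$ generating $g_j$ is either a one-letter extension of a word generating a neighbour of $\Delta$ of maximal contraction, or is itself a word generating a neighbour of non-maximal contraction; correspondingly the entry in \cref{e:trmat} must be read as allowing $\ell=\emptyset$ with $p_\emptyset=1$. With that amendment your induction closes; without it the single-edge identity is false for every edge along which some neighbour persists.
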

\begin{proof}
    Suppose $\Delta_0\in\mathcal{F}_t$ and $\Delta_m\in\mathcal{F}_s$.
    Say $\vs(\Delta_0)=\{f_1,\ldots,f_\ell\}$ with $f_1<\cdots<f_\ell$ and $\vs(\Delta_m)=\{g_1,\ldots,g_m\}$ with $g_1<\cdots<g_m$.
    For each $i$, assume $\tau_i$ generates the neighbour $f_i$, and set $\mathcal{A}_{ij}=\{\omega:\tau_i\omega\in\Lambda_s,\tau_i\omega\text{ generates }g_j\}$.
    Then for any $1\leq j\leq m$, we have
    \begin{align*}
        \bigl(\muv(\Delta_0)T(\eta)\bigr)_j &= \sum_{i=1}^\ell \mus(f_i^{-1}((0,1)))\Bigl(\sum_{\substack{\sigma\in\Lambda_t\\\sigma\text{ generates }f_i}}p_\sigma\Bigr)\cdot\Bigl(\sum_{\omega\in\mathcal{A}_{ij}}\frac{\mus(g_j^{-1}((0,1))}{\mus(f_i^{-1}((0,1))}p_\omega\Bigr)\\
                                         &= \mus(g_j^{-1}((0,1))) \sum_{i=1}^\ell\Bigl(\sum_{\substack{\sigma\in\Lambda_t\\\sigma\text{ generates }f_i}}p_\sigma\Bigr)\cdot\Bigl(\sum_{\omega\in\mathcal{A}_{ij}}p_\omega\Bigr)\\
                                         &= \mus(g_j^{-1}((0,1)))\sum_{\substack{\omega\in\Lambda_s\\\omega\text{ generates }g_j}}p_\omega
    \end{align*}
    so that $\muv(\Delta_0)T(\eta)=\muv(\Delta_m)$.
\end{proof}

\section{Iterated function systems satisfying the weak separation condition}\label{s:ifswsc}
We now focus our attention on self-similar measures associated with IFSs satisfying the weak separation condition.
We give a definition which is slightly different than the original~\cite{ln1999}, but is known to be equivalent when $K$ is not a singleton~\cite{zer1996}.
Given a Borel set $E\subset K$ and $t>0$, we define
\begin{align*}
    \Lambda_t(E) &= \{\sigma\in\Lambda_t:S_\sigma(K)\cap E\neq\emptyset\}\\
    \mathcal{S}_t(E) &= \{S_\sigma:\sigma\in\Lambda_t(E)\}
\end{align*}
Let $U(x,t)$ denote the open ball about $x$ with radius $t$.
\begin{definition}
    We say that the IFS $\{S_i\}_{i\in\mathcal{I}}$ satisfies the \defn{weak separation condition} if
    \begin{equation}\label{e:wsc-max}
        \sup_{x\in \R,t>0}\#\mathcal{S}_t(U(x,t))<\infty.
    \end{equation}
\end{definition}
We can obtain an equivalent formulation of the weak separation condition in terms of a variant of the neighbour set which we call the \defn{covering neighbour set}.
Given a net interval $\Delta\in\mathcal{F}_t$, we write $\vsc(\Delta)=\{T_\Delta^{-1}\circ S_\sigma:\sigma\in\Lambda_t,S_\sigma([0,1])\supseteq\Delta\}$.
We refer to elements of $\vsc(\Delta)$ as \defn{covering neighbours}.
Notably, we omit the requirement that a neighbour $f\in\vsc(\Delta)$ has $f(K)\cap(0,1)\neq\emptyset$.
\begin{remark}\label{r:cov-nb}
    We always have $\vs(\Delta)\subseteq\vsc(\Delta)$ with strict inequality possible.
    Moreover, we note that if $\Delta$ and $\Delta'$ are any net intervals with $\vsc(\Delta)=\vsc(\Delta')$, then necessarily $\vs(\Delta)=\vs(\Delta')$ following similar arguments to \cref{l:nbdet} and \cref{t:ttype}.
    Note that the covering neighbour set is taken as the definition of neighbour set in \cite{hhr2021}.
\end{remark}

We have the following characterization, which is \cite[Prop.~4.3]{hhr2021}:
\begin{proposition}[\cite{hhr2021}]\label{p:wsci}
    The IFS $\{S_i\}_{i\in\mathcal{I}}$ satisfies the weak separation condition if and only if
    \begin{equation*}
        \sup_{\Delta\in\mathcal{F}}\#\vsc(\Delta)<\infty.
    \end{equation*}
\end{proposition}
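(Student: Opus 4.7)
The plan is to prove both implications directly, using the description of generation-$t$ endpoints as the set $\{S_\sigma(0),S_\sigma(1):\sigma\in\Lambda_t\}$ together with the fact that any $\sigma\in\Lambda_t$ has $|r_\sigma|<t\leq|r_{\sigma^-}|$. The forward direction is routine; the reverse direction requires a combinatorial packing argument.

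For the forward direction, I would fix $\Delta\in\mathcal{F}_t$ and choose any $z\in\Delta^\circ\cap K$, which is non-empty by the definition of a net interval. Each $f\in\vsc(\Delta)$ is of the form $T_\Delta^{-1}\circ S_\sigma$ with $\sigma\in\Lambda_t$ and $S_\sigma([0,1])\supseteq\Delta$. Then $z\in S_\sigma([0,1])$ and $\diam S_\sigma([0,1])=|r_\sigma|<t$, so every point of $S_\sigma(K)$ lies within distance $t$ of $z$, placing $\sigma\in\Lambda_t(U(z,t))$. Since $f\mapsto S_\sigma$ is injective, this yields $\#\vsc(\Delta)\leq\#\mathcal{S}_t(U(z,t))\leq C$, where $C$ is the WSC constant in \cref{e:wsc-max}.

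For the reverse direction, assume $N\coloneqq\sup_\Delta\#\vsc(\Delta)<\infty$ and fix $x\in\R$, $t>0$. For each $\sigma\in\Lambda_t$ with $S_\sigma(K)\cap U(x,t)\neq\emptyset$, I would choose a point $y_\sigma\in S_\sigma(K)\cap U(x,t)$ and let $\Delta_\sigma$ be a generation-$t$ net interval with $y_\sigma\in\overline{\Delta_\sigma}$, taking the side of $y_\sigma$ interior to $S_\sigma([0,1])$ in case $y_\sigma$ is a shared boundary point. Because the endpoints of $S_\sigma([0,1])$ are themselves endpoints of generation $t$, the net interval $\Delta_\sigma$ cannot extend outside $S_\sigma([0,1])$, so $\Delta_\sigma\subseteq S_\sigma([0,1])$ and $T_{\Delta_\sigma}^{-1}\circ S_\sigma\in\vsc(\Delta_\sigma)$. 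At most $\#\vsc(\Delta)\leq N$ distinct maps $S_\sigma$ can share any given $\Delta_\sigma=\Delta$, giving
\[
    \#\mathcal{S}_t(U(x,t))\leq N\cdot M_{x,t},
\]
where $M_{x,t}$ is the number of generation-$t$ net intervals meeting $U(x,t)$.

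The main obstacle is bounding $M_{x,t}$ by a constant independent of $x$ and $t$. Each such net interval has diameter $<t$ and lies in $U(x,2t)$, but a volume bound alone is insufficient because generation-$t$ net intervals can a priori be arbitrarily small relative to $t$. The resolution is combinatorial: if many small net intervals cluster in a window of length $O(t)$, then the generation-$t$ endpoints cluster densely there, and since each endpoint is $S_\tau(0)$ or $S_\tau(1)$ for some $\tau\in\Lambda_t$ with $|r_\tau|\geq t\cdot\min_{i\in\mathcal{I}}|r_i|$, one can locate a single small net interval lying inside many distinct $S_\tau([0,1])$, forcing it to have more than $N$ covering neighbours and contradicting the assumption. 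Making this rigorous gives a uniform bound $M_{x,t}\leq M_0(N,\{r_i\}_{i\in\mathcal{I}})$, completing the proof.
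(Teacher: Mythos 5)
The paper does not actually prove this proposition; it is quoted as \cite[Prop.~4.3]{hhr2021}, so there is no in-paper argument to compare against. Judged on its own terms, your forward direction is complete and correct: for $\Delta\in\mathcal{F}_t$ and $z\in\Delta^\circ\cap K$, every $\sigma$ with $S_\sigma([0,1])\supseteq\Delta$ has $S_\sigma(K)\subseteq S_\sigma([0,1])\subseteq U(z,t)$ because $|r_\sigma|<t$, and $f\mapsto T_\Delta\circ f$ injects $\vsc(\Delta)$ into $\mathcal{S}_t(U(z,t))$. The reduction $\#\mathcal{S}_t(U(x,t))\leq N\cdot M_{x,t}$ is also sound, including the boundary-point bookkeeping: since the interior of a net interval contains no generation-$t$ endpoints and the endpoints of $S_\sigma([0,1])$ are generation-$t$ endpoints, any net interval whose interior meets $S_\sigma([0,1])^\circ$ is contained in $S_\sigma([0,1])$.

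The genuine gap is in the final combinatorial step, which you state but do not carry out. The pigeonhole part is fine: partitioning a window of length $O(t)$ into subintervals of length $t r_{\min}/2$, each $S_\tau([0,1])$ (length $\geq t r_{\min}$) fully contains one subinterval, so too many distinct maps forces more than $N$ of them to contain a common subinterval $J$. But at that point you assert one can ``locate a single small net interval lying inside many distinct $S_\tau([0,1])$,'' and this is exactly where an argument is needed: $J$ itself may be disjoint from $K$, in which case $J$ contains no net interval and no contradiction follows. The repair is to look at $I=\bigcap_\tau S_\tau([0,1])\supseteq J$, a nondegenerate interval whose endpoints are extreme points of some $S_\tau(K)$ and hence lie in $K$; since $K$ is perfect (it is a non-singleton self-similar set), $I^\circ\cap K\neq\emptyset$, and any net interval of generation $t$ whose interior contains such a point is contained in every one of the $>N$ intervals $S_\tau([0,1])$, giving $\#\vsc(\Delta)>N$. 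A second, smaller omission: in bounding $M_{x,t}$ you must also rule out a single map $S_\tau$ whose image contains many net intervals; this reduces to bounding the number of generation-$t$ endpoints in $S_\tau([0,1])$, i.e.\ the number of distinct maps with an endpoint there, which is again the same pigeonhole-plus-intersection argument. With these two points supplied the proof closes, so the strategy is viable but the write-up as it stands leaves the decisive step unproven.
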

Net intervals for which $\#\vsc(\Delta)$ attain the supremum in \cref{p:wsci} will play an important role in our analysis in this section.

\subsection{The essential class of the transition graph}
Let $\{S_i\}_{i\in\mathcal{I}}$ be an IFS with associated transition graph $\mathcal{G}$.
Recall that in a directed graph $\mathcal{G}$, an induced subgraph $\mathcal{G}'$ is a subgraph for which there exists some set of vertices $H\subseteq V(\mathcal{G})$ such that $\mathcal{G}'$ has vertex set $H$ and edge set composed of every outgoing edge from a vertex in $H$ which connects to another vertex in $H$.
\begin{definition}
    An \defn{essential class} of $\mathcal{G}$ is an induced subgraph $\mathcal{G}'$ of $\mathcal{G}$ such that
    \begin{enumerate}[nl,r]
        \item for any $v,v'\in\mathcal{G}'$, there exists a path from $v$ to $v'$, and
        \item if $v\in\mathcal{G}$ and $v'\in\mathcal{G}'$ and there is a path from $v'$ to $v$, then $v\in\mathcal{G}'$.
    \end{enumerate}
\end{definition}
In a finite graph, there is always at least one essential class~\cite[Lem.~1.1]{sen1981}.
In an infinite graph, there need not be an essential class; moreover, the essential class, if it exists, need not be finite.
When $\mathcal{G}$ has exactly one essential class, we denote it by $\mathcal{G}_{\ess}$.

We have the following basic observation.
The proof of this result is similar to the idea in \cite[Lemma~4.2]{hhr2021}, but we reiterate the aspects of the proof that we need here for clarity.
\begin{proposition}\label{p:wsc-es}
    Let $\{S_i\}_{i\in\mathcal{I}}$ be an IFS satisfying the weak separation condition.
    Then its transition graph $\mathcal{G}$ has a unique essential class.
\end{proposition}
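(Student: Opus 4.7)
The strategy is to identify a distinguished vertex $v^*$ of $\mathcal{G}$ such that every vertex admits a directed path to $v^*$, and then take its forward-reachable set as the unique essential class.

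First, I would apply \cref{p:wsci} to set $M \coloneqq \sup_{\Delta \in \mathcal{F}} \#\vsc(\Delta) < \infty$ and fix a net interval $\Delta^* \in \mathcal{F}_{t^*}$ attaining the supremum; put $v^* = \vs(\Delta^*)$ and choose $y^* \in (\Delta^*)^\circ \cap K$, which is possible since the interior of any net interval meets $K$. The \emph{core geometric claim} is that for every net interval $\Delta$, some descendant $\Delta'$ of $\Delta$ satisfies $\vs(\Delta') = v^*$. To prove this, I would pick a word $\sigma$ such that $S_\sigma$ generates a neighbour of $\Delta$; by replacing $\sigma$ with $\sigma\xi$ for a sufficiently deep iterate $\xi$, one may arrange $S_\sigma(\Delta^*) \subseteq \Delta^\circ$. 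Let $\tau_1, \ldots, \tau_M$ generate the $M$ covering neighbours of $\Delta^*$. Each $\sigma\tau_i$ lies in $\Lambda_{|r_\sigma| t^*}$ and satisfies $S_{\sigma\tau_i}([0,1]) \supseteq S_\sigma(\Delta^*)$. Let $\Delta'$ be the net interval of generation $|r_\sigma| t^*$ containing $S_\sigma(y^*)$; since $y^* \in (\Delta^*)^\circ$ one has $\Delta' \subseteq S_\sigma(\Delta^*)$, so each $\sigma\tau_i$ contributes a distinct covering neighbour of $\Delta'$. Maximality of $M$ forces $\#\vsc(\Delta') = M$, and the identity
\begin{equation*}
    T_{\Delta'}^{-1} \circ S_{\sigma\tau_i} = (T_{\Delta'}^{-1} \circ S_\sigma \circ T_{\Delta^*}) \circ (T_{\Delta^*}^{-1} \circ S_{\tau_i})
\end{equation*}
yields $\vsc(\Delta') = \vsc(\Delta^*)$ — and hence $\vs(\Delta') = v^*$ by \cref{r:cov-nb} — precisely when $\Delta' = S_\sigma(\Delta^*)$.

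With the core claim in hand, let $H$ be the set of vertices of $\mathcal{G}$ reachable from $v^*$ and let $\mathcal{G}_{\ess}$ denote the induced subgraph on $H$. Forward closure is immediate from the definition of $H$; for strong connectedness, given $v, w \in H$, the core claim produces a path $v \to v^*$ which composes with the existing $v^* \to w$ to give $v \to w$. Thus $\mathcal{G}_{\ess}$ is an essential class. For uniqueness, given any essential class $\mathcal{G}'$, pick $v' \in V(\mathcal{G}')$: the core claim yields a path $v' \to v^*$, so $v^* \in V(\mathcal{G}')$ by property (ii) of an essential class, and then the combination of strong connectedness within $\mathcal{G}'$ and property (ii) applied once more force $V(\mathcal{G}') = H$.

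The main obstacle is establishing the equality $\Delta' = S_\sigma(\Delta^*)$ in the core claim. This requires ruling out endpoints at generation $|r_\sigma| t^*$ lying strictly inside $S_\sigma(\Delta^*)$ that would subdivide it into smaller net intervals: the technical crux is to argue that any such subdivision must contribute an additional covering neighbour of $\Delta'$ beyond the $M$ already exhibited, contradicting the bound $M$. Working this out precisely relies on the combinatorics of overlap words supplied by the weak separation condition and parallels the argument of \cite[Lem.~4.2]{hhr2021}.
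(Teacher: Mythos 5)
Your strategy is the same as the paper's: take a net interval $\Delta^*$ maximizing $\#\vsc$, show that every vertex admits a path to $v^*=\vs(\Delta^*)$ by pushing $\Delta^*$ forward under a suitable $S_\sigma$, and take the forward-reachable set of $v^*$ as the unique essential class. The graph-theoretic bookkeeping at the end (existence and uniqueness of the essential class given the core claim) is fine. However, the step you defer --- proving $\Delta'=S_\sigma(\Delta^*)$, i.e.\ that $S_\sigma(\Delta^*)$ really is a net interval of generation $|r_\sigma|t^*$ --- is not a technicality to be outsourced to \cite{hhr2021}: it is the entire content of the proposition, and it is precisely where the paper's proof does its work. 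Moreover, your one-sentence sketch of how to close it is not quite right. If an endpoint $h=S_\tau(z)$ of generation $|r_\sigma|t^*$ lies in $\bigl(S_\sigma(\Delta^*)\bigr)^\circ$, the word $\tau$ need not contribute a covering neighbour of your particular $\Delta'$ (the net interval containing $S_\sigma(y^*)$), because $S_\tau([0,1])$ need not contain $\Delta'$. The contradiction must instead be extracted at a possibly different net interval $\Delta_2\subseteq S_\sigma(\Delta^*)\cap S_\tau([0,1])$: one first checks that $S_\tau\neq S_{\sigma\zeta}$ for every $\zeta\in\Lambda_{t^*}$ (since $S_{\sigma\zeta}(\{0,1\})$ misses $\bigl(S_\sigma(\Delta^*)\bigr)^\circ$, as $\Delta^*$ is a net interval of generation $t^*$), and then $\Delta_2$ acquires the $M$ covering neighbours generated by $\sigma\tau_1,\ldots,\sigma\tau_M$ together with the new one generated by $\tau$, contradicting maximality of $M$.

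A second, smaller omission: you never arrange $r_\sigma>0$. If $r_\sigma<0$ then $T_{S_\sigma(\Delta^*)}\neq S_\sigma\circ T_{\Delta^*}$, since the normalizing map is required to have positive ratio; the identity you display then produces the reflected copy of $\vsc(\Delta^*)$, which is in general a different vertex, so your path would terminate at the reflection of $v^*$ rather than at $v^*$ itself. The fix is what the paper does: choose $\sigma$ with $S_\sigma(K)\subseteq\Delta^\circ$ and $r_\sigma>0$, appending a letter of negative ratio if necessary.
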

\begin{proof}
    It suffices to show that there exists some vertex $v$ such that if $w$ is any other vertex, there exists an admissible path from $w$ to $v$.
    Then the essential class is the set of all vertices $v'$ for which there is a path from $v$ to $v'$.
    By \cref{p:wsci}, there exists some $t>0$ and net interval $\Delta_0\in\mathcal{F}_t$ such that $\#\vsc(\Delta_0)$ is maximal; let $v\coloneqq \vs(\Delta_0)$.

    Now, let $w\in V(\mathcal{G})$ be arbitrary and $\Delta\in\mathcal{F}$ such that $\vs(\Delta)=w$.
    Since $\Delta^\circ\cap K\neq\emptyset$, there exists some $\sigma\in\mathcal{I}^*$ such that $S_\sigma(K)\subseteq\Delta$ and $r_\sigma>0$.
    Set $\gamma=r_\sigma\cdot t$ and let $\Delta_1\coloneqq S_{\sigma}(\Delta_0)$

    Let $\Delta_0=[a,b]$ have covering neighbours generated by words $\{\omega_1,\ldots,\omega_m\}$ with $\omega_i\in\Lambda_t$.
    By definition of $\gamma $, $\{\sigma \omega_{1},\ldots ,\sigma \omega_{m}\}$ are words of generation $\Lambda_{\gamma }$.
    Note that $(\Delta_1)^\circ\cap K\neq\emptyset$ and that the endpoints of $\Delta_1$ are of the form $S_{\sigma\zeta}(z)$ where $z\in\{0,1\}$ and $\zeta\in\Lambda_t$, so that $\sigma\zeta\in\Lambda_\gamma$.
    In particular, if $\Delta_1\notin\mathcal{F}_\gamma$, then there exists some $\tau\in\Lambda_\gamma$ such that $S_\tau\notin\{S_{\sigma\omega_1},\ldots,S_{\sigma\omega_m}\}$ and $S_\tau([0,1])\supseteq \Delta_1$.
    But then there exists some $\Delta_2\in\mathcal{F}_\gamma$ with $\Delta_2\subseteq\Delta_1\cap S_\tau([0,1])$, where $\Delta_2$ has distinct covering neighbours generated by $\{\omega_1,\ldots,\omega_m\}\cup\{\tau\}$, contradicting the maximality of $\#\vsc(\Delta_0)$.

    Thus $\Delta_1$ is in fact a net interval of generation $\gamma$.
    Moreover, since $r_{\sigma}>0$, we have $T_{\Delta_{1}}=S_{\sigma}\circ T_{\Delta_{0}}$, so that 
    \begin{equation*}
        \vsc(\Delta_{1})=\{T_{\Delta_{1}}^{-1}\circ S_{\sigma \omega_{i}}\}_{i=1}^{m}=\{T_{\Delta_{0}}^{-1}\circ S_{\sigma }^{-1}\circ S_{\sigma }\circ S_{\omega_{i}}\}_{i=1}^{m}=\vsc(\Delta_{0}).
    \end{equation*}
    Thus by \cref{r:cov-nb}, we have $\vs(\Delta_1)=v$ and $\Delta_1\subseteq\Delta$, so that there exists a path from $\vs(\Delta)$ to $\vs(\Delta_1)$, as claimed.
\end{proof}
\begin{definition}
    We say that a point $x\in K$ is an \defn{essential point} if for some symbolic representation $(e_j)_{j=1}^\infty$ of $x$, there exists some $N\in\N$ so that for all $k\geq N$, $e_k\in E(\mathcal{G}_{\ess})$.
    We say that a point $x\in K$ is an \defn{interior essential point} if every symbolic representation has this property.
    We denote the set of all interior essential points by $K_{\ess}$.
    We say a net interval $\Delta\in\mathcal{F}$ is an \defn{essential net interval} if $\vs(\Delta)\in V(\mathcal{G}_{\ess})$.
\end{definition}
If $\Delta$ is an essential net interval, then $\Delta^\circ\cap K\subseteq K_{\ess}$.
Of course, a given path $(e_j)_{j=1}^\infty$ is eventually in the essential class if and only if a single edge is in the essential class.
One may verify that the set of interior essential points is the topological interior of the set of essential points; in particular, the essential points form an open set in $K$.
Interior essential points play an important role in the multifractal analysis of self-similar measures under the weak separation condition.

In the next proposition, we observe that interior essential points are abundant.
\begin{proposition}\label{p:mx-wsc}
    Let $\{S_i\}_{i\in\mathcal{I}}$ be an IFS satisfying the weak separation condition.
    Let $U(x_0,t_0)$ be any open ball which attains the maximal value in \cref{e:wsc-max}.
    Then the following hold:
    \begin{enumerate}[nl,r]
        \item If $\sigma\in\mathcal{I}^*$ is arbitrary, then $S_\sigma(U(x_0,t_0))$ also attains the maximal value in \cref{e:wsc-max}.
        \item $U(x_0,t_0)\cap K$ is contained in a finite union of essential net intervals.
            In particular, $U(x_0,t_0)\cap K\subseteq K_{\ess}$.
    \end{enumerate}
\end{proposition}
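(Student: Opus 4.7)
For part (i), the plan is to construct an explicit injection from $\mathcal{S}_{t_0}(U(x_0, t_0))$ into $\mathcal{S}_{|r_\sigma|t_0}(S_\sigma U(x_0, t_0))$ via $\omega \mapsto \sigma\omega$. The generation condition is preserved since $|r_{\sigma\omega}| = |r_\sigma|\,|r_\omega|$ and $|r_{(\sigma\omega)^-}| = |r_\sigma|\,|r_{\omega^-}|$, and $S_{\sigma\omega}(K) \cap S_\sigma U(x_0, t_0) = S_\sigma(S_\omega(K) \cap U(x_0, t_0))$ is non-empty precisely when $S_\omega(K) \cap U(x_0, t_0)$ is. Since $S_\sigma$ is invertible, composition on the left is injective at the level of functions, yielding $\#\mathcal{S}_{|r_\sigma|t_0}(S_\sigma U(x_0, t_0)) \geq \#\mathcal{S}_{t_0}(U(x_0, t_0))$; maximality in \cref{e:wsc-max} then forces equality.

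For part (ii), the central structural claim I will establish is: \emph{every $\Delta \in \mathcal{F}_{t_0}$ with $\Delta \subseteq U(x_0, t_0)$ satisfies $\vs(\Delta) \in V(\mathcal{G}_{\ess})$.} To prove this, I will fix a net interval $\Delta^*$ with $\vs(\Delta^*)$ essential (guaranteed by \cref{p:wsc-es}), and---mimicking the construction in the proof of that proposition---choose $\sigma \in \mathcal{I}^*$ with $r_\sigma > 0$, $S_\sigma(K) \subseteq (\Delta^*)^\circ$, and $|r_\sigma|t_0 < \tg(\Delta^*)$. Since the convex hull of $K$ is $[0, 1]$ and $K$ is compact, both $0$ and $1$ lie in $K$, so $S_\sigma([0, 1]) \subseteq (\Delta^*)^\circ$, giving $S_\sigma(\Delta) \subseteq S_\sigma U(x_0, t_0) \subseteq (\Delta^*)^\circ$.

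Next I will show that $S_\sigma(\Delta)$ is itself a net interval of generation $|r_\sigma|t_0$ with $\vs(S_\sigma(\Delta)) = \vs(\Delta)$. Writing $\Delta = [a, b]$, the image endpoints $S_\sigma(a), S_\sigma(b)$ are generation-$|r_\sigma|t_0$ endpoints. To rule out internal generation-$|r_\sigma|t_0$ endpoints in $S_\sigma(\Delta)^\circ$, I observe that any such $S_\rho(z) \in S_\sigma(\Delta)^\circ \subseteq S_\sigma U(x_0, t_0)$ forces $S_\rho \in \mathcal{S}_{|r_\sigma|t_0}(S_\sigma U(x_0, t_0))$, which by (i) coincides as a set of functions with $\{S_{\sigma\omega} : \omega \in \Lambda_{t_0}(U(x_0, t_0))\}$. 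Writing $S_\rho = S_{\sigma\omega}$ and pulling back through $S_\sigma^{-1}$ would place a generation-$t_0$ endpoint $S_\omega(z)$ in $\Delta^\circ$, contradicting $\Delta \in \mathcal{F}_{t_0}$. The identity $T_{S_\sigma(\Delta)} = S_\sigma \circ T_\Delta$ (valid because $r_\sigma > 0$), together with the same correspondence between $\rho$'s and $\omega$'s, yields the neighbour-set equality. Since $|r_\sigma|t_0 < \tg(\Delta^*)$, $S_\sigma(\Delta)$ is a descendant of $\Delta^*$ in the net interval tree, so there is an admissible path $\vs(\Delta^*) \to \vs(\Delta)$ in $\mathcal{G}$; closure of $V(\mathcal{G}_{\ess})$ under outgoing edges then places $\vs(\Delta) \in V(\mathcal{G}_{\ess})$.

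The proposition will then follow: the finitely many generation-$t_0$ net intervals contained in $U(x_0, t_0)$ are essential by the above, and an analogous analysis applied to descendants handles the at most two boundary-straddling generation-$t_0$ net intervals, producing the required finite union. The containment $U(x_0, t_0) \cap K \subseteq K_{\ess}$ is immediate since every $x \in U(x_0, t_0) \cap K$ has sufficiently deep net intervals in each symbolic representation contained in $U(x_0, t_0)$ and hence essential. The main obstacle will be the verification that $S_\sigma(\Delta) \in \mathcal{F}_{|r_\sigma|t_0}$ with matching neighbour set, relying decisively on part (i) to preclude overlapping words $\rho$ outside $\{S_{\sigma\omega}\}_\omega$; a secondary technical point is arranging the finite cover for boundary-straddling net intervals.
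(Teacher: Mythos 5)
Part (i) is exactly the paper's argument: map $\omega\mapsto\sigma\omega$, note the inclusion $\{S_{\sigma\omega}\}\subseteq\mathcal{S}_{|r_\sigma|t_0}(S_\sigma(U(x_0,t_0)))$, and invoke maximality. The core mechanism of your part (ii) --- conjugating a net interval into the interior of a fixed essential net interval by some $S_\sigma$ with $r_\sigma>0$ and using (i) to show the image is again a net interval with the same neighbour set, hence a descendant of an essential vertex --- is also the paper's mechanism. (One small point: to conclude $U(x_0,t_0)\cap K\subseteq K_{\ess}$ you need your structural claim not just at generation $t_0$ but at all generations $t\leq t_0$; this does work, by passing from $\rho\in\Lambda_{|r_\sigma|t}$ to its prefix in $\Lambda_{|r_\sigma|t_0}$ before applying (i), but it should be said.)

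The genuine gap is the finite union, i.e.\ the first assertion of (ii). Your claim only covers net intervals \emph{contained} in $U(x_0,t_0)$, and the proposed fix for the boundary-straddling intervals --- ``an analogous analysis applied to descendants'' --- does not close it. If $K$ accumulates at a boundary point $x_0\pm t_0$ from inside the ball (which nothing rules out), then at \emph{every} generation there is a straddling net interval, and the points of $K\cap U(x_0,t_0)$ near that boundary point cannot be covered by finitely many net intervals that are essential-because-contained-in-$U(x_0,t_0)$: any such interval has its endpoint strictly inside $U(x_0,t_0)$, so infinitely many of them are needed. Nor does the conjugation argument apply directly to a straddling $\Delta$: a word $\tau$ with $S_\tau(K)$ meeting $S_\sigma(\Delta)^\circ$ may meet it only in the part lying outside $S_\sigma(U(x_0,t_0))$, so (i) cannot be used to force $S_\tau=S_{\sigma\omega}$. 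The paper closes exactly this gap by proving the stronger statement that every net interval of some fixed generation $t_1$ that merely \emph{intersects} $U(x_0,t_0)$ is essential, using two ingredients absent from your plan: first, since the ball is open, the perturbed balls $U(x_0+\epsilon,t_0)$ are also maximal for small $\epsilon$, which upgrades ``$S_\tau(K)$ misses $S_{\sigma_0}(U(x_0,t_0))$'' to ``$S_\tau(K)$ misses the closed ball $S_{\sigma_0}(B(x_0,t_0))$'' for the finitely many offending cylinders $H$; second, $t_1$ is chosen smaller than $\min\{\dist(f(K),S_{\sigma_0}(B(x_0,t_0))):f\in H\}$, so that any word whose image meets $S_{\sigma_0}(\Delta)^\circ$ for a straddling $\Delta$ of generation $t_1$ is forced to meet $S_{\sigma_0}(U(x_0,t_0))$ after all, whence (i) applies. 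You need some version of this (or another argument handling intervals that meet but are not contained in the ball) to obtain the finite cover, which is the part of the statement actually used later in the paper.
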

\begin{proof}
    To see that $S_\sigma(U(x_0,t_0))$ also attains the maximal value in \cref{e:wsc-max}, if
    \begin{equation*}
        \mathcal{S}_{t_0}(U(x_0,t_0))=\{S_{\phi_1},\ldots,S_{\phi_m}\},
    \end{equation*}
    then $S_{\sigma\phi_i}\in\mathcal{S}_{|r_\sigma|t_0}(S_\sigma(U(x_0,t_0)))$ for each $i$ and $\#\mathcal{S}_{|r_\sigma|t_0}(S_\sigma(U(x_0,t_0)))\geq m$.
    Then equality holds by maximality of $m$.

    We now see (ii).
    By definition of net intervals, we know that for any $t>0$, $U(x_0,t_0)\cap K$ is contained in a finite union of net intervals of generation $t$.
    In particular, it suffices to show that there is some $t_1>0$ such the set
    \begin{equation*}
        \{\Delta\in\mathcal{F}_{t_1}:\Delta\cap U(x_0,t_0)\neq\emptyset\}
    \end{equation*}
    is composed only of essential net intervals.
    Let $\Delta_0$ be a fixed essential net interval and let $\sigma_0\in\mathcal{I}^*$ have $r_{\sigma_0}>0$ and $S_{\sigma_0}([0,1])\subseteq\Delta_0$.
    As argued above, $S_{\sigma_0}(U(x_0,t_0))$ also attains the maximal value in \cref{e:wsc-max}.
    Let
    \begin{equation*}
        H=\{S_\sigma:\sigma\in\Lambda_{r_{\sigma_0}t_0},S_\sigma(K)\cap S_{\sigma_0}(U(x_0,t_0))=\emptyset\}.
    \end{equation*}
    Since $S_{\sigma_0}(U(x_0,t_0))$ is open, there exists some $\epsilon_0>0$ such that for any $\epsilon$ with $|\epsilon|<\epsilon_0$, $S_{\sigma_0}(U(x_0+\epsilon,t_0))$ also attains the maximal value in \cref{e:wsc-max}.
    In particular, if $S_\sigma\in H$ is arbitrary, we in fact have $S_\sigma(K)\cap S_{\sigma_0}(B(x_0,t_0))=\emptyset$.
    Since $H$ is a finite set, take
    \begin{equation*}
        t_1 = \min\bigl\{\min\{\dist(f(K),S_{\sigma_0}(B(x_0,t_0))) : f\in H\},t_0\bigr\}>0.
    \end{equation*}
    It remains to show that such a $t_1$ works.

    Write $\mathcal{S}_{t_0}(U(x_0,t_0))=\{S_{\phi_1},\ldots,S_{\phi_m}\}$ and set
    \begin{equation*}
        F = \{\Delta\in\mathcal{F}_{t_1}:\Delta\cap U(x_0,t_0)\neq\emptyset\}.
    \end{equation*}
    Suppose for contradiction there is some $\Delta\in F$ that is not an essential net interval, and let $\Delta$ have neighbours generated by distinct functions $\{S_{\omega_1},\ldots,S_{\omega_k}\}$ with $\omega_i\in\Lambda_{t_1}$.
    As argued in \cref{p:wsc-es}, since $\Delta_1\coloneqq S_{\sigma_0}(\Delta)$ is not a net interval with neighbour set $\vs(\Delta)$ (or $\Delta_1$ would be a descendant of $\Delta_0$, and hence essential), there exists some $\tau\in \Lambda_{r_{\sigma_0}t_1}$ such that $S_\tau(K)\cap\Delta_1^\circ\neq\emptyset$ and $S_\tau\neq S_{\sigma_0\omega_i}$ for each $1\leq i\leq k$.
    We also observe that
    \begin{equation}\label{e:sigma-ch}
        \{S_{\sigma_0\omega_1},\ldots,S_{\sigma_0\omega_k}\}=\{S_{\sigma_0\xi}:\xi\in\Lambda_{t_1},S_{\sigma_0\xi}(K)\cap\Delta_1^\circ\neq\emptyset\}.
    \end{equation}

    Since $t_1\leq t_0$, let $\tau_1\preccurlyeq\tau$ be the unique prefix in $\Lambda_{r_{\sigma_0} t_0}$.
    Suppose for contradiction $S_{\tau_{1}}(K)\cap S_{\sigma_0}(U(x_0,t_0))\neq\emptyset$.
    Since $S_{\sigma_0}(U(x_0,t_0))$ attains the maximal value in \cref{e:wsc-max}, we have $S_{\tau_{1}}=S_{\sigma_0}\circ S_\omega$ for some $S_\omega\in \mathcal{S}_{r_{\sigma_0} t_0}(S_\sigma(U(x_0,t_0)))$.
    Thus there exists some word $\xi$ such that $S_\tau=S_{\sigma_0}\circ S_\xi$, which contradicts \cref{e:sigma-ch}.
    We thus have that $S_{\tau_{1}}(K)\cap S_{\sigma_0}(U(x_0,t_0))=\emptyset$ so that $S_{\tau_1}\in H$.

    But by definition of $\Delta_1$, we have that $\Delta_1\cap S_{\sigma_0}(U(x_0,t_0))\neq\emptyset$ and $\Delta_1^\circ\cap S_{\tau_1}(K)\neq\emptyset$, so
    \begin{equation*}
        \dist(S_{\tau_1}(K),S_{\sigma_0}(U(x_0,t_0)))<\diam(\Delta_1)\leq t_1,
    \end{equation*}
    contradicting the choice of $t_1$.
    Thus every $\Delta\in F$ is in fact essential, as claimed.
\end{proof}
\begin{remark}\label{r:im-essential}
    In fact, the same proof shows that if $U(x_0,t_0)$ attains the maximal value in \cref{e:wsc-max}, $\Delta\subset U(x_0,t_0)$ is any net interval, and $r_\sigma>0$, then $S_\sigma(\Delta)$ is a net interval with $\vs(\Delta)=\vs(S_\sigma(\Delta))$.
    In particular, $\Delta$ must be an essential net interval.
\end{remark}
\begin{remark}
    In \cref{sss:valid-open}, we show that the converse of (ii) need not hold: there exists some IFS $\{S_i\}_{i\in\mathcal{I}}$ satisfying the weak separation condition and an essential net interval $\Delta$ such that $\Delta\cap K$ is not contained a finite union of balls $U(x_0,t_0)$.
    In the same example, we show that if $W$ is the union of all balls $U(x_0,t_0)$ which attain the maximal value in \cref{e:wsc-max}, then $W\cap K\subsetneq K_{\ess}$.
\end{remark}
\subsection{An important measure approximation lemma}
The following technical lemma is a key approximation property for measures satisfying the weak separation condition, and the main factor behind the regularity of the measure on the essential class.
Note the similarity of the result to the weak separation ``counting'' results; see, for example, Feng and Lau~\cite[Prop.~4.1]{fl2009}.
\begin{lemma}\label{l:bmeas}
    Suppose the IFS $\{S_i\}_{i\in\mathcal{I}}$ satisfies the weak separation condition, and let $v\in V(\mathcal{G}_{\ess})$ be fixed.
    Then there exist constants $c,C>0$ (depending on $v$) such that for any ball $B(x,t)$ with $\mus(B(x,t))>0$, there exists $t\geq s\geq c t$ and $\Delta\in\mathcal{F}_s$ such that $\Delta\subseteq B(x,2 t)$, $\vs(\Delta)=v$, and $\muv(\Delta)_j\geq C\cdot\mus(B(x, t))$ for each $1\leq j\leq \# v$.
\end{lemma}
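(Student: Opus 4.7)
The plan is to obtain $\Delta$ as the image of a fixed reference net interval $\Delta^*$ with $\vs(\Delta^*) = v$ under a similarity $S_{\sigma^*}$, where $\sigma^*$ is selected by a pigeonhole argument to capture a uniform fraction of $\mus(B(x,t))$.

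First I would build the reference. Fix a maximal open ball $U_0 = U(x_0,t_0)$ attaining the supremum in \cref{e:wsc-max}. By the proof of \cref{p:wsc-es} there is an essential net interval $\Delta_0 \subseteq U_0$; since $v \in V(\mathcal{G}_{\ess})$ and the essential class is strongly connected, some admissible path from $\vs(\Delta_0)$ to $v$ has a net interval realization terminating at a $\Delta^* \subseteq \Delta_0 \subseteq U_0$ with $\vs(\Delta^*) = v$. Fix any generation $t^* \in (\tg(\Delta^*), \ag(\Delta^*)]$, write $\vs(\Delta^*) = \{g_1,\ldots,g_m\}$, and set $C_0 \coloneqq \min_{1\leq j\leq m}\muv(\Delta^*)_j$. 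This is strictly positive because $\supp\mus = K$ and each $g_j^{-1}((0,1))$ is a non-empty open set meeting $K$, and depends only on $v$ up to a bounded factor from the choice of $\Delta^*$.

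For the pigeonhole, the weak separation condition applied to the covering $B(x,t) \subseteq U(x-t,t)\cup U(x,t)\cup U(x+t,t)$ yields a uniform bound $\#\mathcal{S}_t(B(x,t)) \leq M'$ depending only on the IFS. The invariance identity for $\mus$ gives
\begin{equation*}
    \mus(B(x,t)) = \sum_{f \in \mathcal{S}_t(B(x,t))} \mus(f^{-1}(B(x,t)))\!\!\sum_{\substack{\sigma \in \Lambda_t\\ S_\sigma = f}} p_\sigma,
\end{equation*}
so some $f^* = S_{\sigma^*}$ satisfies $P \coloneqq \sum_{\sigma:S_\sigma = f^*} p_\sigma \geq \mus(B(x,t))/M'$. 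If $r_{\sigma^*} < 0$ I would replace $\sigma^*$ by $\sigma^* i$ for a letter $i \in \mathcal{I}$ with $r_i < 0$ (such $i$ exists precisely when orientation is not already forced), altering constants by a bounded factor, so henceforth I assume $r_{\sigma^*} > 0$.

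I then set $\Delta \coloneqq S_{\sigma^*}(\Delta^*)$. Since $S_{\sigma^*}(K)$ meets $B(x,t)$ and $S_{\sigma^*}([0,1])$ has length $|r_{\sigma^*}| < t$, we get $\Delta \subseteq B(x,2t)$. Because $\Delta^* \subseteq U_0$ and $r_{\sigma^*} > 0$, \cref{r:im-essential} ensures $\Delta \in \mathcal{F}_s$ with $\vs(\Delta) = v$ for the choice $s = |r_{\sigma^*}| t^*$; writing $r_{\min} = \min_{i \in \mathcal{I}} |r_i|$, the inclusion $\sigma^* \in \Lambda_t$ forces $|r_{\sigma^*}| \geq r_{\min} t$, so $s \geq ct$ with $c = r_{\min} t^*$. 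For the vector bound, any $\omega \in \Lambda_{t^*}$ generating $g_j$ for $\Delta^*$ combines with any $\sigma$ satisfying $S_\sigma = f^*$ to give $\sigma\omega \in \Lambda_s$ generating $g_j$ for $\Delta$, so
\begin{equation*}
    \muv(\Delta)_j \geq P \cdot \muv(\Delta^*)_j \geq \frac{C_0}{M'} \mus(B(x,t)),
\end{equation*}
yielding the conclusion with $C = C_0/M'$. The main obstacle I anticipate is the generation bookkeeping: the single choice $s = |r_{\sigma^*}| t^*$ must simultaneously place $\Delta$ in $\mathcal{F}_s$ and every word $\sigma\omega$ appearing above in $\Lambda_s$, and this hinges on the rigidity of \cref{t:ttype} together with the preserved neighbour set guaranteed by \cref{r:im-essential}.
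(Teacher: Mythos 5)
Your proof is correct and follows the same core strategy as the paper's: bound $\#\mathcal{S}_t(B(x,t))$ by the weak separation condition, pigeonhole to extract a word $\sigma^*$ whose class $\{S_\sigma=f^*\}$ carries at least a $1/M'$ fraction of $\mus(B(x,t))$, fix orientation by appending a letter of negative ratio if needed, and transport a reference net interval by $S_{\sigma^*}$. The one structural difference is the order of operations. The paper transports the net interval $\Delta_0$ of \emph{maximal covering neighbour set} (justifying that $S_{\omega_1}(\Delta_0)$ is a net interval with the same neighbour set by repeating the argument of \cref{p:wsc-es}), and only afterwards walks the transition graph from $\vs(\Delta_0)$ to $v$ via $\muv(\Delta)=\muv(\Delta_1)T(\eta)$, which costs an extra factor $\epsilon$ equal to the smallest positive entry of $T(\eta)$. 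You instead descend to a net interval $\Delta^*$ with $\vs(\Delta^*)=v$ \emph{inside} $U_0$ first and transport that directly, leaning on \cref{r:im-essential} to guarantee $S_{\sigma^*}(\Delta^*)$ is a net interval with neighbour set $v$; this avoids the transition-matrix step entirely, at the price of invoking a remark the paper only sketches ("the same proof shows"), whereas the paper's route is self-contained modulo \cref{t:approx}. Two minor bookkeeping points: your final constant should be $c=r_{\min}^2 t^*$ rather than $r_{\min}t^*$ to absorb the appended orientation-fixing letter (you flagged this as "a bounded factor", so it is consistent), and the existence of an essential net interval $\Delta_0\subseteq U_0$ is really a consequence of \cref{p:mx-wsc} and \cref{r:im-essential} (any sufficiently deep net interval contained in the open set $U_0$ is essential) rather than of the proof of \cref{p:wsc-es}, which produces a maximal-covering-neighbour-set interval with no control on its location.
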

\begin{proof}
    Since $\mus(B(x,t))>0$ and $\mus$ is non-atomic, $U(x,t)\cap K\neq\emptyset$.
    From the weak separation condition, there exists some $\ell\in\N$ such that $\#\mathcal{S}_t(B(x,t))\leq\ell$ for any $x\in\R$ and $t>0$.
    By the invariant property of $\mus$ and since $\mus$ is a probability measure, we have
    \begin{align*}
        \mus(B(x, t)) &= \sum_{\sigma\in\Lambda_t(B(x, t))}p_\sigma\mus\circ S_\sigma^{-1}((B(x, t)))\leq \sum_{\sigma\in\Lambda_t(B(x, t))}p_\sigma\\
                          &= \sum_{S_\omega\in\mathcal{S}_t(B(x, t))}\sum_{\substack{\sigma\in\Lambda_t(B(x, t))\\S_\sigma=S_\omega}}p_\sigma.
    \end{align*}
    In particular, since $\#\mathcal{S}_t(B(x, t))\leq\ell$, get $\omega_0$ such that
    \begin{equation}\label{e:peqn}
        \sum_{\substack{\sigma\in\Lambda_t(B(x, t))\\S_\sigma=S_{\omega_0}}}p_\sigma\geq\mus(B(x, t))/\ell.
    \end{equation}
    Note that $S_{\omega_0}(K)\cap B(x, t)\neq\emptyset$, so that $S_{\omega_0}([0,1])\subseteq B(x,2 t)$.
    If $r_{\omega_0}<0$, get $k\in\mathcal{I}$ with $r_k<0$ and set $\omega_1=\omega_0k$; otherwise, take $\omega_1=\omega_0$.
    Now, let $\Delta_0\in\mathcal{F}_{s_0}$ be such that $\#\vs^c(\Delta_0)$ is maximal.
    Exactly as argued in \cref{p:wsc-es}, $\Delta_1\coloneqq S_{\omega_1}(\Delta_0)$ is a net interval in generation $r_{\omega_1}\cdot  s_0$ with $\vs(\Delta_1)=\vs(\Delta_0)$.
    Moreover, we know that if $\sigma$ generates some neighbour $f$ of $\Delta_0$, then $\omega_1\sigma$ generates the same neighbour $f$ of $\Delta_1$.
    Fix some $1\leq j\leq \#\vs(\Delta_1)$ and let $f_j$ be the neighbour of $\Delta_1$ corresponding to the index $j$.
    We then have by using \cref{e:peqn} and the above observation that
    \begin{align*}
        (\muv(\Delta_1))_j &= \mu(f_j^{-1}((0,1)))\sum_{\substack{\sigma\in\Lambda_{ s_0 r_{\omega_1}}\\\sigma\text{ generates }f_j}}p_\sigma\\
                           &\geq p_k\Bigl(\sum_{\substack{\sigma\in\Lambda_t(B(x, t))\\S_\sigma=S_{\omega_1}}}p_\sigma\Bigr)\cdot\mu(f_j^{-1}((0,1)))\cdot\sum_{\substack{\sigma\in\Lambda_{ s_0}\\\sigma\text{ generates }f_j}}p_\sigma\\
                           &\geq \mus(B(x, t))\cdot\frac{p_k\cdot (\muv(\Delta_0))_j}{\ell}\geq\mus(B(x, t))\cdot C_1
    \end{align*}
    where $C_1\coloneqq p_k\cdot \min_j(\muv(\Delta_0))_j/\ell$, which depends only on the IFS and choice of probabilities.

    Now let $\eta$ be any fixed path from $\vs(\Delta_0)$ to $v$ and let $\epsilon$ be the smallest strictly positive entry of $T(\eta)$.
    Let $\Delta$ be the unique net interval with symbolic $\gamma\eta$ where $\gamma$ is the symbolic representation of $\Delta_0$.
    Since $T(\eta)$ is non-negative and $\muv(\Delta)=\muv(\Delta_1)T(\eta)$ is a positive vector, we have that $(\muv(\Delta))_j\geq \mus(B(x, t))\cdot C_1\cdot \epsilon$.
    Taking $C\coloneqq C_1\epsilon$, we see that $C$ satisfies the requirements.
    Moreover, since $\Delta_0\in\mathcal{F}_{r_{\omega_0} s_0}$, taking $c= s_0L(\eta)\cdot r_{\min}^2$ and noting that $ t\cdot r_{\min}\leq |r_{\omega_0}|\leq t$,  we have that $\Delta\in\mathcal{F}_s$ where $ s\geq c t$.
    Finally, $\Delta\subseteq\Delta_1\subseteq S_{\omega_0}([0,1])\subseteq B(x,2 t)$ as required.
\end{proof}
\subsection{Measure properties of the essential class}
As our first consequence of this lemma, we establish that the interior essential points form a large subset of $K$.
\begin{theorem}\label{t:ess-meas}
    Let $\{S_i\}_{i\in\mathcal{I}}$ be an IFS satisfying the weak separation condition with attractor $K$ and let $v\in V(\mathcal{G}_{\ess})$ be arbitrary.
    Let
    \begin{equation*}
        E=\bigcup_{\substack{\Delta\in\mathcal{F}\\\vs(\Delta)=v}}\Delta\cap K.
    \end{equation*}
    Then if $\mus$ is any associated self-similar measure, $\mus(K\setminus E)=0$.
    In particular, $\mus(K\setminus K_{\ess})=0$.
\end{theorem}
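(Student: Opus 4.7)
The plan is to argue by contradiction using \cref{l:bmeas} together with the Lebesgue differentiation theorem for Radon measures on $\R$. Assume $\mus(K\setminus E)>0$. Since $\mus$ is a finite Borel measure on $\R$, standard differentiation theory guarantees that $\mus$-a.e.\ point of $K\setminus E$ is a density point of $K\setminus E$, so in particular
\begin{equation*}
    \lim_{t\to 0^+}\frac{\mus(B(x,2t)\cap E)}{\mus(B(x,2t))}=0.
\end{equation*}
A separate elementary covering argument (covering $\mathrm{supp}(\mus)\subseteq[0,1]$ by intervals of radius $R$ and using $\mus(B(x,R))\leq R^N$) shows that the set of $x$ with $\underline{\dim}_{\mathrm{loc}}\mus(x)=\infty$ is $\mus$-null. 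I would therefore fix $x\in K\setminus E$ enjoying both properties.

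Next, for each $t>0$, apply \cref{l:bmeas} at $B(x,t)$ to produce a net interval $\Delta\subseteq B(x,2t)$ with $\vs(\Delta)=v$ and $\mus(\Delta)\geq\muv(\Delta)_1\geq C\cdot\mus(B(x,t))$. Since $\vs(\Delta)=v$ we have $\Delta\cap K\subseteq E$, whence $\mus(B(x,2t)\cap E)\geq C\mus(B(x,t))$. Combining with the density condition at $x$, given any $N>1$ there is $T_N>0$ such that for all $0<t\leq T_N$,
\begin{equation*}
    \mus(B(x,2t))\geq N\cdot\mus(B(x,t)).
\end{equation*}
Iterating this dyadic inequality on the sequence $t_k=T_N/2^{k+1}$ yields $\mus(B(x,t_k))\leq N^{-k}\mus(B(x,T_N/2))$, and taking $\log$ gives $\underline{\dim}_{\mathrm{loc}}\mus(x)\geq \log_2 N$. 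Since $N$ is arbitrary, $\underline{\dim}_{\mathrm{loc}}\mus(x)=\infty$, contradicting the choice of $x$. Hence $\mus(K\setminus E)=0$.

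For the final assertion, I observe that any $x\in E$ lying in the interior of some essential net interval $\Delta$ with $\vs(\Delta)=v$ is automatically an interior essential point: once a symbolic descent of $x$ reaches $\Delta$, all subsequent net intervals are descendants of $\Delta$, hence vertices in $\mathcal{G}_{\ess}$. Consequently $E\setminus K_{\ess}$ is contained in the set of endpoints of net intervals with neighbour set $v$; this set is countable since $\mathcal{F}$ is countable, and has $\mus$-measure zero because $\mus$ is non-atomic. Combining with $\mus(K\setminus E)=0$ gives $\mus(K\setminus K_{\ess})=0$.

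The main technical obstacle is the doubling-blowup step: one must confirm that the density collapse at $x$ combined with the lower bound from \cref{l:bmeas} genuinely forces an infinite lower local dimension. The subtlety is that the inequality $\mus(B(x,2t))\geq C\epsilon^{-1}\mus(B(x,t))$ is only available for $t$ below some threshold $T_N$ depending on $N$, so the iteration must be performed at dyadic scales below $T_N$ while tracking the ratio $\log\mus(B(x,t_k))/\log t_k$ carefully; the required bound on $\underline{\dim}_{\mathrm{loc}}\mus$ for $\mus$-a.e.\ $x$ is standard but deserves explicit mention.
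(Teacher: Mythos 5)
Your proof is correct, but it takes a genuinely different route from the paper's. Both arguments hinge on the same key input, \cref{l:bmeas}, which places inside $B(x,2t)$ a net interval with neighbour set $v$ carrying at least a fixed proportion $C$ of $\mus(B(x,t))$. The paper uses this to run an explicit exhaustion: starting from $[0,1]$ it iteratively removes from each remaining interval a disjoint subfamily of such net intervals (a greedy selection on the line keeps at least a third of their total mass), so the measure of what remains decays like $(1-C/3)^n$ while $K\setminus E$ survives every stage. You instead argue by contradiction at a single generic point: by the Besicovitch density theorem a $\mus$-typical point of $K\setminus E$ sees a vanishing proportion of $E$-mass in $B(x,2t)$, which combined with \cref{l:bmeas} forces the doubling ratio $\mus(B(x,2t))/\mus(B(x,t))$ to blow up, hence $\dimll\mus(x)=\infty$; this contradicts the standard fact that $\dimll\mus(x)\leq 1$ at $\mus$-a.e.\ point of $\R$. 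Your dyadic iteration and the passage to intermediate radii are handled correctly, and your treatment of the final assertion (interior points of essential net intervals lie in $K_{\ess}$, and the leftover endpoints form a countable, hence $\mus$-null, set since $\mus$ is non-atomic) matches what the paper asserts. The trade-off: the paper's argument is elementary and quantitative, giving a geometric decay rate and using no covering machinery beyond a three-interval overlap observation in $\R$, whereas yours is shorter but imports the differentiation theorem and the a.e.\ finiteness of the lower local dimension; in exchange, your argument would transfer essentially verbatim to higher-dimensional settings where the paper's hand-made disjointification would itself require a Besicovitch-type covering lemma.
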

\begin{proof}
    By \cref{l:bmeas}, there exist constants $c,C>0$ such that for any $t>0$ and ball $B(x,t)$ with $\mus(B(x,t))>0$, there exists some net interval $\Delta\in\mathcal{F}$ with $\Delta\subseteq B(x,2t)$, $\vs(\Delta)=v$, and $\mus(\Delta)\geq C\mus(B(x,r))$.
    We will construct a nested family of sets $E_1\supseteq E_2\supseteq\cdots$ such that each $E_n$ is a finite union of intervals, $\mus(E_n)\leq (1-C/3)^n$, and $K\setminus E\subseteq\bigcap_{n=1}^\infty E_n$.
    From this, the result clearly follows.

    First consider the ball $B_1=B(0,1)$.
    Get $\Delta_1\subseteq B(0,2)$ with $\vs(\Delta_1)=v$, set $E_1=[0,1]\setminus\Delta_1$ so that $\mus(E_1)\leq 1-C\leq 1-C/3$.
    Since $\Delta_1$ is an interval, $E_1$ is a finite union of intervals and clearly $K\setminus E\subseteq E_1$.
    Inductively, suppose $E_n$ is a finite union of intervals with $\mus(E_n)\leq(1-\lambda)^n$.
    Since each $E_n$ is a finite union of intervals, there is a family of balls $\{B(x_i,t_i)\}_{i=1}^m$ such that the balls only overlap pairwise on endpoints, $E_n=\bigcup_{i=1}^m B(x_i,t_i)$, and for any distinct $i_1,i_2,i_3$,
    \begin{equation}\label{e:bsep}
        B(x_{i_1},2t_{i_1})\cap B(x_{i_2},2t_{i_2})\cap B(x_{i_3},2t_{i_3})
    \end{equation}
    is either a singleton or the empty set and hence has measure $0$, as $\mus$ has no atoms.
    Now for each $1\leq i\leq m$, apply \cref{l:bmeas} to get $\Delta_n^{i}\subseteq B(x_i,2t_i)$ with $\mus(\Delta_n^{i})\geq C\mus(B(x_i,t_i))$.
    While the $\Delta_n^{i}$ need not be disjoint, by \cref{e:bsep}, there exists a sub-collection labelled without loss of generality $\{\Delta_n^{i}\}_{i=1}^{m'}$ such that
    \begin{equation*}
        \sum_{i=1}^{m'} \mus(\Delta_n^{i})\geq\frac{1}{3}\sum_{i=1}^{m} \mus(\Delta_n^{i})
    \end{equation*}
    and $\Delta_n^{i}\cap\Delta_n^{j}$ is at most a singleton for $i\neq j$.
    (To do this, pick the interval $\Delta_n^{i}$ with the largest measure and remove any net intervals $\Delta_n^{j}$ where $\Delta_n^{j}\cap\Delta_n^{i}$ is not a singleton.
    By \cref{e:bsep} and the geometry in $\R$, there are at most 2 such indices $j$.
    Then repeat until the set is exhausted.)

    Set $E_{n+1}=E_n\setminus\bigcup_{i=1}^{m'}\Delta_n^{i}$.
    Each $\Delta_n^{i}$ is an interval with $\vs(\Delta_n^{i})=v$, so that $E_{n+1}$ is a finite union of intervals with $K\setminus E\subseteq E_{n+1}$, and
    \begin{align*}
        \mus(E_{n+1}) &= \mus(E_n)-\sum_{i=1}^{m'}\mus(\Delta_n^{i})\leq \mus(E_n)-\frac{C}{3}\sum_{i=1}^{m} \mus(B(x_i,t))\\
                      &\leq (1-C/3)\mus(E_n)\leq (1-C/3)^{n+1}
    \end{align*}
    as claimed.
\end{proof}
\begin{remark}
    It can also be shown, using similar techniques, that if $s=\dimH K$, then $\mathcal{H}^s(K\setminus K_{\ess})=0$ where $\mathcal{H}^s$ is the $s$-dimensional Hausdorff measure.
    This follows from Ahlfors regularity of self-similar sets under the weak separation condition~\cite[Thm.~2.1]{fhor2015} along with \cref{l:nbdet}, in place of \cref{l:bmeas}.
\end{remark}

\subsection{Local dimensions and periodic points}
The notion of a periodic point was introduced by Hare, Hare and Matthews for IFS of the form $\{x\mapsto rx+d_i\}_{i\in\mathcal{I}}$ with $0<r<1$ satisfying the finite type condition~\cite{hhm2016}.
In this section, we take advantage of the general matrix product formula, \cref{t:approx}, to establish symbolic formulas for the local dimensions at certain points which we call periodic.
\begin{definition}
    Given a Borel probability measure $\mu$, by the \defn{lower local dimension} of $\mu$ at $x\in\supp\mu$, we mean the number
    \begin{equation*}
        \dimll\mu(x)=\liminf_{t\downarrow 0}\frac{\log \mu(B(x,t))}{\log t}.
    \end{equation*}
    The \defn{upper local dimension} is defined analogously; when the upper and lower local dimensions coincide, we call the shared value the \defn{local dimension} of $\mu$ at $x$, denoted by $\diml\mu(x)$.
\end{definition}
\begin{definition}
    A \defn{periodic point} is a point $x\in K$ where every symbolic representation of $x$ is of the form
    \begin{equation*}
        [x]=(e_1,\ldots,e_n,\theta,\theta,\ldots)
    \end{equation*} 
    where $n$ is minimal and $\theta=(\theta_1,\ldots,\theta_m)$ is a cycle of $\mathcal{G}$ with minimal length.
    In this case, we call $\theta$ a \defn{period} of the symbolic representation.
\end{definition}
Intuitively, periodic points are the natural analogue of the rational numbers; for example, with respect to the IFS $\{x\mapsto x/2,x\mapsto x/2+1/2\}$, the periodic points of this IFS are precisely the rational numbers in $[0,1]$.
Under the weak separation condition, it is straightforward to see that the periodic points form a countable dense subset of $K$: if $x,y\in K$ have symbolic representations of the form $\gamma\eta_1$ and $\gamma\eta_2$, then both $x$ and $y$ are in the net interval with symbolic representation $\gamma$.

The proofs of \cref{l:ldapprox} and \cref{p:period} are motivated by the proofs~\cite[Thm. 2.6 and Prop. 2.7]{hhn2018}.

Fix some $x\in K$.
Enumerate $\{h_j:j=1,\ldots,n\}=\{S_\sigma(0),S_\sigma(1):\sigma\in\Lambda_t\}$ with $h_1<\cdots<h_n$.
If $x\neq h_j$ for each $1\leq j\leq n$, then there is a unique net interval $\Delta_t(x)=[h_i,h_{i+1}]$ of generation $t$ containing $x$.
We then say $\Delta_t^-(x)$ is the empty set if $i=1$ or $(h_{i-1},h_i)\cap K=\emptyset$, and $\Delta_t^-(x)=[h_{i-1},h_i]$ otherwise, and we define $\Delta_t^+$ similarly.
Then set
\begin{align*}
    M_t(x)=\Delta_t^-(x)\cup\Delta_t(x)\cup\Delta_t^+(x).
\end{align*}
Otherwise, $x=h_m$ for some $m$, and we write $\Delta^{1}_t(x)=[h_{m-1},h_m]$ if $m\neq 1$ and $(h_{m-1},h_m)\cap K$ is non-empty, and similarly for $\Delta^{2}_t(x)$, and set
\begin{equation*}
    M_t(x)=\Delta^{1}_t(x)\cup\Delta^{2}_t(x).
\end{equation*}
We have the following basic estimation:
\begin{lemma}\label{l:ldapprox}
    Let $\{S_i\}_{i\in\mathcal{I}}$ be an IFS as in \cref{e:ifs} and let $x\in K$ be such that $\sup\{\lm(\Delta):x\in\Delta,\Delta\in\mathcal{F}\}<\infty$.
    Then if $\mus$ is any associated self-similar measure,
    \begin{equation*}
        \diml\mus(x)=\lim_{t\to 0}\frac{\log\mu_{\bm{p}}(M_t(x))}{\log t}
    \end{equation*}
    provided the limit on the right exists.
    Similar statements hold with respect to the limit supremum and limit infimum for the upper and lower local dimensions respectively.
\end{lemma}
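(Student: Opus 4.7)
The plan is to sandwich $\mus(B(x,t))$ between $\mus(M_t(x))$ and $\mus(M_{s(t)}(x))$ for some $s(t) \asymp t$, which suffices since $\log s(t)/\log t \to 1$ as $t \to 0^+$, so the ratios of the logarithms agree in the limit.

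For the easy direction, observe that $M_t(x)$ is a union of at most three adjacent net intervals of $\mathcal{F}_t$, each of diameter at most $t$ and containing or adjacent to $x$; hence $M_t(x) \subseteq B(x, 2t)$, so $\mus(M_t(x)) \leq \mus(B(x, 2t))$. Dividing by $\log t < 0$ for small $t$ and taking $\liminf$ yields
\begin{equation*}
    \liminf_{t \to 0^+} \frac{\log \mus(M_t(x))}{\log t} \geq \liminf_{t \to 0^+} \frac{\log \mus(B(x, 2t))}{\log t} = \dimll \mus(x),
\end{equation*}
and the analogous computation with $\limsup$ handles the upper local dimension.

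For the reverse direction, the hypothesis $L \coloneqq \sup\{\lm(\Delta) : \Delta \in \mathcal{F}, x \in \Delta\} < \infty$ is essential. From the identity $\tg(\Delta) = \lm(\Delta) \cdot \diam(\Delta)$ and the bound $|r_\sigma| \geq r_{\min} \cdot s$ for $\sigma \in \Lambda_s$, one derives the two-sided diameter estimate $r_{\min} s/L \leq \diam(\Delta) \leq s$ for any $\Delta \in \mathcal{F}_s$ containing $x$, and this estimate propagates up the parent chain of $\Delta_t(x)$ since every ancestor also contains $x$. Combined with the relation $\ag(\Delta) \leq \tg(\Delta)/r_{\min}$ drawn from the definitions of $\tg$ and $\ag$, the generations along the parent chain grow by a factor of at most $r_{\min}^{-2}$ per step. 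I would consider the chain $\Delta^{(0)} = \Delta_t(x), \Delta^{(1)}, \Delta^{(2)}, \ldots$ (each $\Delta^{(k+1)}$ the parent of $\Delta^{(k)}$) and let $k^*$ be the least index with $\Delta^{(k^*)} \supseteq B(x, t) \cap K$. Arguing that a bounded number of parent-steps suffices (uniformly in small $t$), one obtains $s_{k^*} \leq Ct$ for a constant $C$ depending only on $L$ and $r_{\min}$. Since $\Delta^{(k^*)} = \Delta_{s_{k^*}}(x) \subseteq M_{s_{k^*}}(x)$, we deduce $\mus(B(x, t)) \leq \mus(M_{s_{k^*}}(x))$, and the log-ratio computation above closes the sandwich.

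The main obstacle is establishing the uniform bound on the number of parent-steps needed for the chain to cover $B(x, t) \cap K$. The key idea is that once the diameter of some ancestor exceeds $2t$, the pigeonhole principle forces at least one endpoint to be at distance $\geq t$ from $x$, absorbing the $K$-points of $B(x,t)$ on that side, and bounded further expansion up the chain covers the opposite side together with the adjacent net intervals lying in $M_{s_{k^*}}(x)$. The case where $x$ is a common endpoint of two net intervals (so that $M_t(x) = \Delta_t^1(x) \cup \Delta_t^2(x)$) is analogous with minor notational modifications.
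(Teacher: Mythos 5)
Your easy direction ($M_t(x)\subseteq B(x,2t)$, hence $\mus(M_t(x))\leq\mus(B(x,2t))$ and the log-ratio comparison) matches the paper. The gap is in the reverse direction. Your central claim --- that after a \emph{bounded} number of parent-steps some ancestor $\Delta^{(k^*)}$ of $\Delta_t(x)$ contains $B(x,t)\cap K$, so that $s_{k^*}\leq Ct$ --- is false in general. The obstruction is that $x$ may lie arbitrarily close to a net-interval endpoint $h$ that persists as an endpoint of \emph{every} ancestor up to a macroscopic generation. Concretely, for the IFS $\{x/3,\ x/3+1/3,\ x/3+2/3\}$ with $K=[0,1]$, the point $h=1/3=S_1(1)=S_{13}(1)=S_{133}(1)=\cdots$ is an endpoint of every generation; taking $x=1/3-\delta$ with $\delta$ tiny and $t$ comparable to $\delta$, every ancestor of $\Delta_t(x)$ is of the form $[1/3-3^{-n},1/3]$, so the first ancestor containing the points of $K\cap B(x,t)$ lying to the right of $1/3$ is $[0,1]$ itself, after roughly $\log_3(1/t)$ steps. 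Your pigeonhole remark only controls one side of $x$; the other side is precisely the problem, and it is the reason $M_t(x)$ is defined to include the adjacent net intervals $\Delta_t^{\pm}(x)$ rather than being a single ancestor. Your closing sentence gestures at patching this with ``the adjacent net intervals lying in $M_{s_{k^*}}(x)$,'' but that is not developed, and it is where all the content of the lemma lives.

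The paper's proof never leaves scale $s$. From $\tg(\Delta)=\lm(\Delta)\diam(\Delta)$ together with $s\leq\ag(\Delta)$ one gets $\epsilon s<\diam(\Delta)\leq s$ for every net interval $\Delta\in\mathcal{F}_s$ containing $x$, with $\epsilon$ depending only on $r_{\min}$ and $L=\sup\{\lm(\Delta):x\in\Delta\}$ --- this is the only place the hypothesis enters, and you derived essentially the same two-sided diameter estimate but then did not deploy it where it is needed. Since in the boundary case both $\Delta_s^{1}(x)$ and $\Delta_s^{2}(x)$ contain $x$ and hence each has diameter exceeding $\epsilon s$, and in the interior case the triple $\Delta_s^-(x)\cup\Delta_s(x)\cup\Delta_s^+(x)$ plays the same role, one obtains $B(x,\epsilon s)\subseteq M_s(x)$ and therefore the two-sided bound $\mus(B(x,\epsilon s))\leq\mus(M_s(x))\leq\mus(B(x,2s))$, after which the logarithmic sandwich closes exactly as you describe. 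If you replace your parent-chain step with this same-scale inclusion, the remainder of your argument goes through.
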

\begin{proof}
    Suppose the local dimension exists and equals $D$.
    Recall that if $\Delta\in\mathcal{F}_t$, then $t\geq\tg(\Delta)=\lm(\Delta)\diam(\Delta)$.
    Thus there exists some constant $0<\epsilon$ such that for any $t>0$ and $\Delta\in\mathcal{F}_t$ with $x\in\Delta$, $\epsilon t<\diam(\Delta)$.
    Moreover, $\diam(\Delta)\leq t$ always holds by the net interval construction.

    If $x$ is a boundary point, get $s$ such that $x$ is an endpoint of $\Delta_s(x)$ and
    \begin{equation*}
        B(x,\epsilon s)\subseteq\Delta_s^{1}(x)\cup\Delta_s^{2}(x)\subseteq B(x,2s)
    \end{equation*}
    where the notation is as above.
    Otherwise if $x$ is not a boundary point, then
    \begin{equation*}
        B(x,\epsilon s)\subseteq\Delta^-_s(x)\cup\Delta_s(x)\cup\Delta_s^+(x)\subseteq B(x,2s)
    \end{equation*}
    In either case, $B(x,\epsilon s)\subseteq M_s(x)\subseteq B(x,2s)$ so that
    \begin{align*}
        \left(\frac{\log\epsilon+\log s}{\log s}\right)&\left(\frac{\log \mus(B(x,\epsilon s))}{\log \epsilon s}\right)\\
        &\leq \frac{\log \mus(M_s(x))}{\log s}\leq\left(\frac{\log s+\log 2}{\log s}\right)\left(\frac{\log\mus(B(x,2s))}{\log 2s}\right).
    \end{align*}
    The limit of the left and right both exist and are equal to $D$; hence, the limit of the middle expression exists and equals $D$.
    The arguments for the upper and lower dimension follow similarly.
\end{proof}
In the following proposition, recall that for a path $\theta$, $L(\theta)$ is the length of the path defined in \cref{d:e-len}.
\begin{proposition}\label{p:period}
    Let $\{S_i\}_{i\in\mathcal{I}}$ be any IFS and suppose $x$ is a periodic point with period $\theta=(e_1,\ldots,e_s)$.
    Then the local dimension of $\mu$ at $x$ exists and is given by
    \begin{equation*}
        \dim_{loc}\mu(x)=\frac{\log\spr(T(\theta))}{\log L(\theta)}
    \end{equation*}
    where if $x$ is a boundary point of a net interval with two different symbolic representations given by periods $\theta$ and $\phi$, then $\theta$ is chosen to satisfy
    \begin{equation*}
        \frac{\log\spr(T(\theta))}{\log L(\theta)}\leq \frac{\log\spr(T(\phi))}{\log L(\phi)}.
    \end{equation*}
\end{proposition}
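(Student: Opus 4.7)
The plan is to analyze $\mus(\Delta)$ and $\tg(\Delta)$ along the canonical nested sequence of net intervals determined by the eventually periodic representation of $x$, compute the limiting ratio via the spectral theory of $T(\theta)$, and then transfer to the local dimension via \cref{l:ldapprox}. Write the representation as $(e_1,\ldots,e_n,\theta,\theta,\ldots)$ with $\theta=(\theta_1,\ldots,\theta_s)$, let $\Delta_0$ be the net interval realizing the prefix $(e_1,\ldots,e_n)$, and for each $k\geq 0$ let $\Delta_k$ realize $(e_1,\ldots,e_n,\theta^k)$, so $\{\Delta_k\}_{k\geq 0}$ is a descending sequence containing $x$. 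By \cref{t:approx}, $\muv(\Delta_k)=\muv(\Delta_0)T(\theta)^k$, and by iterating \cref{d:e-len}, $\tg(\Delta_k)=\tg(\Delta_0)\cdot L(\theta)^k$.

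Set $\rho=\spr(T(\theta))$. The main step is to show $\log\mus(\Delta_k)/k\to\log\rho$. The upper bound $\limsup_k\norm{\muv(\Delta_0)T(\theta)^k}^{1/k}\leq \rho$ is immediate from Gelfand's formula. For the matching lower bound, I would invoke Perron--Frobenius: since $T(\theta)$ is non-negative, it admits a non-negative right eigenvector $w$ with $T(\theta)w=\rho w$, and strict positivity of $\muv(\Delta_0)$ forces $\muv(\Delta_0)w>0$. From
\begin{equation*}
    \muv(\Delta_0)T(\theta)^k w=\rho^k\muv(\Delta_0)w
\end{equation*}
together with the obvious bound $\muv(\Delta_0)T(\theta)^k w\leq \norm{w}_\infty\cdot \norm{\muv(\Delta_0)T(\theta)^k}$, one obtains $\mus(\Delta_k)=\norm{\muv(\Delta_k)}\geq c\rho^k$ for some $c>0$. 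Combined with $\log\tg(\Delta_k)/k\to\log L(\theta)$, this gives $\log\mus(\Delta_k)/\log\tg(\Delta_k)\to\log\rho/\log L(\theta)$.

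To promote this to a statement about the local dimension, I would invoke \cref{l:ldapprox}, whose hypothesis $\sup\{\lm(\Delta):x\in\Delta\}<\infty$ is automatic because the path for $x$ visits only finitely many distinct neighbour sets (those appearing in the prefix and in the cycle). When $x$ has a unique symbolic representation, $M_t(x)$ differs from $\Delta_k$ by at most two adjacent net intervals whose measures can be bounded by a constant multiple of $\mus(\Delta_k)$ using \cref{t:approx} applied to the eventually periodic representations of their descendants, so the same limiting ratio is obtained. When $x$ is a boundary point with two distinct periodic representations having periods $\theta$ and $\phi$, the set $M_t(x)$ is comparable to the union of the two net intervals containing $x$, and $\mus(M_t(x))\asymp \mus(\Delta_t^1(x))+\mus(\Delta_t^2(x))\asymp \max_i\mus(\Delta_t^i(x))$. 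Since $\log t<0$, dividing flips the maximum into a minimum, yielding exactly the choice of $\theta$ specified in the statement.

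The main obstacle is the spectral lower bound for $\norm{\muv(\Delta_0)T(\theta)^k}$: a priori, $\muv(\Delta_0)$ could be orthogonal to the Perron eigenspace of $T(\theta)$ or supported in a spectral subspace with smaller eigenvalue, which is why the strict positivity of $\muv(\Delta_0)$ (noted following \cref{e:mu-f}) together with Perron--Frobenius is essential rather than incidental. A secondary technicality is ensuring, in the unique-representation case, that the two adjacent net intervals appearing in $M_t(x)$ do not contribute an asymptotically larger measure than $\Delta_k$; this reduces to the observation that their neighbour sets also lie in a finite set of vertices and so their measures decay at rates bounded above by the relevant spectral rate.
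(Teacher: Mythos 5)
Your overall strategy is the same as the paper's: compute $\mus$ along the nested net intervals realizing $(e_1,\ldots,e_n,\theta^k)$ via powers of $T(\theta)$ using \cref{t:approx}, identify the exponential rate with $\spr(T(\theta))$, transfer to the local dimension through \cref{l:ldapprox}, and take a minimum of the two rates at a two-sided boundary point. Your lower bound for $\norm{\muv(\Delta_0)T(\theta)^k}$ via a non-negative Perron eigenvector $w$ paired against the strictly positive vector $\muv(\Delta_0)$ is correct ($w$ need only be non-negative and non-zero for $\muv(\Delta_0)w>0$, so irreducibility is not an issue), though the paper gets the same bound more cheaply: $\norm{T(\theta)^{k}}\geq\spr(T(\theta))^{k}$ holds for any square matrix and any submultiplicative norm, and $\norm{T(\gamma\eta)}\geq c\,\norm{T(\eta)}$ because every column of a transition matrix has a positive entry, so Perron--Frobenius is not actually needed.

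The one step whose stated justification does not hold up is your treatment of the adjacent net intervals in $M_t(x)$ in the unique-representation case. The claim that their measures ``decay at rates bounded above by the relevant spectral rate'' because ``their neighbour sets lie in a finite set of vertices'' is a non sequitur: finiteness of the set of neighbour sets appearing near $x$ does not by itself bound $\mus(\Delta_t^{\pm}(x))$ by a constant times $\spr(T(\theta))^{m}$, and these adjacent intervals need not have eventually periodic representations, so \cref{t:approx} applied to ``their descendants'' gives nothing directly. The repair, which is what the paper does, is a containment argument. Writing $\Delta^{(m)}$ for the net interval with symbolic representation $(e_1,\ldots,e_n,\theta^{m})$, one has $\Delta^{(m+1)}\subseteq(\Delta^{(m)})^{\circ}$: otherwise, by \cref{t:ttype} the relative position of the one-period descendant is the same for every $m$, so all the $\Delta^{(m)}$ would share a common endpoint, which would then equal $x$, contradicting the uniqueness of the symbolic representation. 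Since endpoints of a coarser generation persist into all finer generations, any net interval of generation $t$ whose interior meets $(\Delta^{(m)})^{\circ}$ is contained in $\Delta^{(m)}$, whence $\Delta^{(m+2)}\subseteq\Delta_t(x)\subseteq M_t(x)\subseteq\Delta^{(m)}$ for the appropriate $m=m(t)$. This sandwiches $\mus(M_t(x))$ between constant multiples of $\norm{T(\theta)^{m+2}}$ and $\norm{T(\theta)^{m}}$, and simultaneously handles the interpolation for $t$ lying strictly between the transition generations of consecutive $\Delta^{(k)}$, which your sketch also glosses over. With that substitution your argument goes through.
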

\begin{proof}
    First, suppose $x$ is a periodic point with two distinct symbolic representations with periods $\theta=(\theta_1,\ldots,\theta_\ell)$ and $\phi=(\phi_1,\ldots,\phi_{\ell'})$, so that $x$ is an endpoint of some net interval $\Delta\in\mathcal{F}$.
    We first note that
    \begin{align*}
        \mu_{\bm{p}}(\Delta_t^{1}(x))&=\bigl\lVert T(e_1,\ldots,e_{j},\underbrace{\theta,\ldots,\theta}_{m},\theta_1,\ldots,\theta_t)\bigr\rVert\\
        \mu_{\bm{p}}(\Delta_t^{2}(x))&=\bigl\lVert T(e_1',\ldots,e'_{j'},\underbrace{\phi,\ldots,\phi}_{m'},\phi_1,\ldots,\phi_{t'})\bigr\rVert
    \end{align*}
    for $t$ sufficiently small, $t < \ell$, and $t'<\ell'$.
    Now, get constants $c_i$ which do not depend on $t$ such that
    \begin{align}\label{e:comp}
        \norm{(T(\theta))^{m+1}} &\leq \bigl\lVert T(\underbrace{\theta,\ldots,\theta}_m,\theta_1,\ldots,\theta_t)\bigr\rVert\cdot\norm{T(\theta_{t+1},\ldots,\theta_\ell)}\nonumber\\
                                 &\leq c_1\bigl\lVert T(e_1,\ldots,e_{j},\underbrace{\theta,\ldots,\theta}_m,\theta_1,\ldots,\theta_t)\bigr\rVert\leq c_2\norm{T(\theta)^m}.
    \end{align}
    Moreover, since
    \begin{equation*}
        L(e_1,\ldots,e_j)L(\theta)^m L(\theta_1,\ldots,\theta_t) r_{\min}\leq t\leq L(e_1,\ldots,e_j)L(\theta)^m L(\theta_1,\ldots,\theta_t),
    \end{equation*}
    we have $L(\theta)^m\asymp t$ with constants of comparability not depending on $t$.
    Thus, there exist $k_i$ not depending on $t$ so that
    \begin{equation*}
        \frac{\log k_1\norm{T(\theta)^{m+1}}^{1/(m+1)}}{\log k_3\cdot L(\theta)}\geq\frac{\log \mu(\Delta_t^{1}(x))}{\log t}\geq \frac{\log k_2\norm{(T(\theta))^m}^{1/m}}{\log k_4\cdot L(\theta)}
    \end{equation*}
    and taking the limit as $t$ goes to 0 yields
    \begin{equation*}
        \lim_{t\to 0}\frac{\log \mu_{\bm{p}}(\Delta_t^{1}(x))}{\log t} = \frac{\log \spr(T(\theta))}{\log L(\theta)}
    \end{equation*}
    In the exact same way, we get
    \begin{equation*}
        \lim_{t\to 0}\frac{\log \mu_{\bm{p}}(\Delta_t^{2}(x))}{\log t} = \frac{\log \spr(T(\phi))}{\log L(\phi)}.
    \end{equation*}
    Now, since $x$ is a periodic point, the set $\{\vs(\Delta):x\in\Delta,\Delta\in\mathcal{F}\}$ is finite.
    Since $\lm(\Delta)$ depends only on $\vs(\Delta)$, $\sup\{\lm(\Delta):x\in\Delta,\Delta\in\mathcal{F}\}<\infty$ and the assumptions for \cref{l:ldapprox} hold.
    Then by the power mean inequality, we have
    \begin{align*}
        \diml\mu_{\bm{p}}(x) &= \lim_{t\to 0}\frac{\log\mu_{\bm{p}}(\Delta_t^{1}(x))+\mu_{\bm{p}}(\Delta^{2}_t(x))}{\log t}\\
                                   &= \min\left(\lim_{t\to 0}\frac{\log\mu_{\bm{p}}(\Delta_t^{1}(x))}{\log t},\lim_{t\to 0}\frac{\log\mu_{\bm{p}}(\Delta^{2}_t(x))}{\log t}\right)\\
                                   &= \min\left(\lim_{t\to 0}\frac{\log\spr T(\theta)}{\log L(\theta)},\lim_{t\to 0}\frac{\log\spr T(\phi)}{\log L(\phi)}\right)
    \end{align*}
    since the final two limits in the maximum exist, as claimed.

    If $x$ is an endpoint of some net interval but has only one symbolic representation, then either $\Delta_t^{1}(x)$ or $\Delta_t^{2}(x)$ is empty for sufficiently small $t$ and the argument is identical, but easier.

    Finally, suppose $x$ is not an endpoint of any net interval, and thus has unique symbolic representation $[x] = (e_1,\ldots,e_{j},\theta,\theta,\ldots)$ where $\theta=(\theta_1,\ldots,\theta_\ell)$.
    In this situation, $\Delta_1$ has symbolic representation $(e_1,\ldots,e_{j},\theta^n)$ and $\Delta_2$ has symbolic representation $(e_1,\ldots,e_{j},\theta^{n+1})$ for any $n\in\N$, we have $\Delta_2\subseteq\Delta_1^\circ$.
    Thus for any $t$ sufficiently small, there exists some $m\in\N$, such that $\Delta_1\subseteq\Delta_t(x)\subseteq M_t(x)\subseteq\Delta_2$ where $\Delta_1$ has symbolic representation $(e_1,\ldots,e_j,\theta^m)$ and $\Delta_2$ has symbolic representation $(e_1,\ldots,e_j,\theta^{m+2})$.
    Similarly as argued in \cref{e:comp}, there exist constants $c_1,c_2$ such that $\norm{T(\theta)^{m+2}}\leq c_1\mu(\Delta_t(x))\leq c_2\norm{T(\theta)^m}$.
    In addition, since $M_t(x)\subseteq\Delta_1$, we have $\mu(M_t(x))\leq\mu(\Delta_1)$ and there exist constants $c_1',c_2'$ such that $\norm{T(\theta)^{m+2}}\leq c_1'\mu(M_t(x))\leq c_2'\norm{T(\theta)^m}$.

    The argument proceeds identically as before.
\end{proof}
\section{Multifractal formalism under the weak separation condition}
In this section, we prove the multifractal formalism results under the weak separation condition.
\subsection{Density of local dimensions at periodic points}
We first show that under the weak separation condition periodic points are abundant, in that the set of local dimensions at periodic points is dense in the set of local dimensions in the essential class.
This generalizes a result of Hare, Hare and Ng on local dimensions~\cite[Cor.~3.15]{hhn2018} for IFSs satisfying substantially stricter conditions.
This property can be useful in computing the exact set of possible local dimensions; see, for example, \cref{sss:non-comm-dim} or the discussions of examples in~\cite{hhm2016,hhn2018,hhs2018}.
\begin{theorem}\label{t:per-dens}
    Let $\{S_i\}_{i\in\mathcal{I}}$ be an IFS satisfying the weak separation condition and $\mus$ an associated self-similar measure.
    Then the set of local dimensions at periodic points is dense in $\{\dimul(x):x\in K_{\ess}\}$ and $\{\dimll(x):x\in K_{\ess}\}$.
\end{theorem}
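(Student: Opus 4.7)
The strategy is to show that for each $x \in K_{\ess}$ and each $d \in \{\dimll\mus(x), \dimul\mus(x)\}$, there is a sequence of periodic points $y_k$ with $\diml\mus(y_k) \to d$. Let $(e_i)_{i \geq 1}$ be a symbolic representation of $x$; since $x \in K_{\ess}$, after discarding finitely many initial edges we may assume $e_i \in E(\mathcal{G}_{\ess})$ for all $i$, and write $\Delta_n$ for the net interval with symbolic representation $(e_1,\ldots,e_n)$. Combining \cref{l:ldapprox} with \cref{l:bmeas} (applied at a fixed $v_0 \in V(\mathcal{G}_{\ess})$, which upgrades the comparison of $\mus(M_t(x))$ to $\mus$ of essential net intervals of comparable diameter), one derives
\begin{equation*}
    \dimll\mus(x) = \liminf_{n\to\infty}\frac{\log\mus(\Delta_n)}{\log\diam(\Delta_n)}, \qquad \dimul\mus(x) = \limsup_{n\to\infty}\frac{\log\mus(\Delta_n)}{\log\diam(\Delta_n)}.
\end{equation*}

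Fix $\epsilon > 0$ and pick $N$ so that $v_0 := \vs(\Delta_N) \in V(\mathcal{G}_{\ess})$; choose $n_k \uparrow \infty$ along which the above ratio converges to $d$. Strong connectivity of $\mathcal{G}_{\ess}$ produces paths $\gamma_k$ from $\vs(\Delta_{n_k})$ back to $v_0$, and by passing to a subsequence (pigeonhole on the terminal neighbour sets, which ensures that some vertex is visited infinitely often at uniformly bounded cost), we may assume $|\gamma_k|$ and $\norm{T(\gamma_k)}$ are uniformly bounded. Setting $\theta_k = (e_{N+1},\ldots,e_{n_k},\gamma_k)$, a cycle based at $v_0$, \cref{p:period} yields a periodic point $y_k$ whose local dimension is $D_k := \diml\mus(y_k) = \log\spr T(\theta_k)/\log L(\theta_k)$.

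To conclude $D_k \to d$, consider the net interval $\Delta^{(m)}_k$ with symbolic representation $(e_1,\ldots,e_N,\theta_k^m)$, so that by \cref{t:approx}, $\muv(\Delta^{(m)}_k) = \muv(\Delta_N) T(\theta_k)^m$. Since $\muv(\Delta_N)$ is a fixed strictly positive vector, $\mus(\Delta^{(m)}_k) \asymp \norm{T(\theta_k)^m}$ with constants independent of $m$, and the Gelfand identity $\norm{T(\theta_k)^m}^{1/m} \to \spr T(\theta_k)$ combined with $\diam(\Delta^{(m)}_k) \asymp L(\theta_k)^m$ forces $\log\mus(\Delta^{(m)}_k)/\log\diam(\Delta^{(m)}_k) \to D_k$ as $m \to \infty$. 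Applying \cref{l:bmeas} at $y_k$ for each $m$ to locate nearby essential net intervals with the uniform entry-wise comparability $\muv_j \asymp \mus$, one shows that $\mus(\Delta^{(m+1)}_k)/\mus(\Delta^{(m)}_k) \asymp \lambda_k$ is essentially constant in $m$, whence $\lambda_k \asymp \spr T(\theta_k)$. Comparing at $m = 1$, the net interval $\Delta^{(1)}_k$ is a descendant of $\Delta_{n_k}$ differing only via the bounded path $\gamma_k$, so $\mus(\Delta^{(1)}_k) \asymp \mus(\Delta_{n_k})$ and $L(\theta_k) \asymp \diam(\Delta_{n_k})$, and pulling these comparisons together yields $D_k \to d$.

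The principal obstacle is that $\spr T(\theta_k)$ can in principle be much smaller than $\norm{T(\theta_k)}$ for a non-negative matrix, so the naive inequality $\spr \leq \norm{\cdot}$ only yields one direction of the bound. The weak separation condition resolves this through the uniform regularity provided by \cref{l:bmeas}: each entry of $\muv(\Delta)$ is comparable to $\mus(\Delta)$ for the essential net intervals encountered in the iteration, so the matrix product exhibits no rank-loss and the leading eigenvalue $\spr T(\theta_k)$ genuinely equals the per-step decay rate of $\mus$ along the cycle. Carefully arranging this regularity across all $\Delta^{(m)}_k$ (rather than merely at a single level) is the technical step requiring the most care.
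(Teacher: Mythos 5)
Your high-level plan (close long stretches of a symbolic path into cycles, invoke \cref{p:period}, and control $\spr$ against $\norm{\cdot}$) is in the right spirit, and you correctly isolate the crux --- bounding $\spr T(\theta_k)$ from below --- but two steps do not go through as written. First, the opening identity $\dimll\mus(x)=\liminf_n\log\mus(\Delta_n)/\log\diam(\Delta_n)$ (and its $\limsup$ analogue) does not follow from \cref{l:ldapprox} plus \cref{l:bmeas}. \cref{l:ldapprox} expresses the local dimensions through $M_t(x)$, which includes the net intervals \emph{adjacent} to $\Delta_t(x)$; since $\mus(\Delta_n)\le\mus(M_t(x))$ one only gets $\liminf_n\log\mus(\Delta_n)/\log\diam(\Delta_n)\ge\dimll\mus(x)$, and the reverse inequality would require the adjacent net intervals to carry at most a bounded multiple of $\mus(\Delta_n)$ --- something neither lemma supplies, and which can fail when $x$ sits near the boundary of $\Delta_n$ at infinitely many scales. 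Since your sequence $n_k$ is chosen against this quantity, the construction may converge to the wrong value.

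Second, and more seriously, your lower bound on $\spr T(\theta_k)$ rests on the assertion that each entry of $\muv(\Delta)$ is comparable to $\mus(\Delta)$ for the essential net intervals encountered along the iteration. That is not what \cref{l:bmeas} says: the lemma produces, for each ball $B(x,t)$, \emph{some} net interval $\Delta\subseteq B(x,2t)$ with prescribed neighbour set whose $\muv$-entries are all $\ge C\mus(B(x,t))$; it says nothing about the particular net intervals $\Delta_n$ of the symbolic representation of $x$, nor about your iterates $\Delta_k^{(m)}$, whose vectors may have entries far smaller than their total mass (a neighbour may be generated by a single low-probability word). Without a uniform entrywise lower bound on the terminal vector one cannot pass from $\muv(\Delta_N)T(\theta_k)=\muv(\cdot)$ to a lower bound on $\spr T(\theta_k)$, and the claim that $\mus(\Delta_k^{(m+1)})/\mus(\Delta_k^{(m)})$ is essentially constant in $m$ is precisely what needs proof. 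The paper's argument sidesteps both issues by not using the symbolic representation of $x$ at all: it fixes an essential net interval $\Delta_0\ni x$, uses the \cref{l:bmeas} construction to manufacture inside $B(x,2t)$ a net interval $\Delta_t$ with $\vs(\Delta_t)=\vs(\Delta_0)$ and $\muv(\Delta_t)_j\ge C\mus(B(x,t))$ for \emph{every} $j$, takes $\eta_t$ to be the resulting cycle, and obtains $\spr T(\eta_t)\ge\min_j\muv(\Delta_t)_j/\max_j\muv(\Delta_0)_j\ge C'\mus(B(x,t))$ by subinvariance, together with $\spr T(\eta_t)\le\norm{T(\eta_t)}\le C''\mus(B(x,2t))$ and $L(\eta_t)\asymp t$; letting $t\to0$ along suitable subsequences then gives density in both the upper and lower local dimensions directly.
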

\begin{proof}
    Let $x$ be an interior essential point.
    Either there exists some $s_0$ such that there is a unique essential net interval $\Delta_0\in\mathcal{F}_{s_0}$ containing $x$, or there exists essential net intervals $\Delta_0^{1},\Delta_0^{2}$ such that $\{x\}=\Delta_0^{1}\cap\Delta_0^{1}$.
    The cases are similar, but the latter is slightly harder, so we treat that here.

    Let $t_0>0$ be such that $B(x,2t_0)\subseteq\Delta_0^{1}\cup\Delta_0^{2}$.
    Arguing similarly to \cref{l:bmeas}, there exists constants $c,C>0$ such that for any $0<t\leq t_0$, there exists $\Delta_t^{1}\subseteq\Delta_t^{2}\subseteq B(x,2t)$ and for each $k=1,2$, we have $\Delta_t^{k}\in\mathcal{F}_s$ where $t\geq s\geq ct$,
    \begin{equation*}
        \min\{\muv(\Delta_t^{k})_j:1\leq j\leq \# \vs(\Delta_t^{k})\}\geq C\mus(B(x,t)),
    \end{equation*}
    and $\vs(\Delta_i^{k})=\vs(\Delta_0^{k})$.
    We may also assume that $\Delta_t^{1}$ and $\Delta_t^{2}$ do not contain $x$ as an endpoint.
    In particular, for each $0<t\leq t_0$, there exists some $k\in\{1,2\}$ such that $\Delta_t^{k}\subseteq(\Delta_0^{k})^\circ$.
    Set $\Delta_t=\Delta_t^{k}$ and let $\eta_t$ be the path in the transition graph corresponding to $\Delta_t^{k}\subseteq\Delta_0^{k}$, which is a cycle since the two net intervals have the same neighbour set.
    Let $\gamma_1$ be the symbolic representation of $\Delta_0^{1}$ and $\gamma_2$ the symbolic representation of $\gamma_0^{2}$

    For each $0<t\leq t_0$, let $x_t$ be any periodic point with period $\eta_t$.
    We note that since $x_t$ is not the boundary point of any net interval, we have by \cref{p:period}
    \begin{equation*}
        \diml\mus(x_t)=\frac{\log\spr T(\eta_t)}{\log L(\eta_t)}.
    \end{equation*}

    Fix $t$ as above, and let $\Delta_0\in\{\Delta_0^{1},\Delta_0^{2}\}$ be such that $x_0\in\Delta_0^\circ$.
    Let $\Delta_0$ have symbolic representation $\gamma$.
    By definition of $c$, we observe that $t\geq \tg(\Delta_t)\geq c r_{\min}t$.
    Since $\tg(\Delta_t)=L(\gamma)L(\eta_t)$, there exist constants $c_1,c_2>0$ (not depending on $t$) such that
    \begin{equation*}
        c_1 2t\leq L(\eta_t)\leq c_2 t.
    \end{equation*}

    We also bound $\spr T(\eta_t)$.
    Since $\Delta_t\subseteq B(x,2t)$ has symbolic representation $\gamma\eta_t$, we have $\norm{T(\gamma\eta_t)}\leq \mus(B(x,2t))$ and since $T(\gamma)$ is a transition matrix, there exists some $C_1>0$ such that
    \begin{equation*}
        \spr T(\eta_t)\leq \norm{T(\eta_t)}\leq C_1\mus(B(x,2t))
    \end{equation*}
    (just take $C_1$ to be the smallest strictly positive entry of $T(\gamma_1)$ and $T(\gamma_2)$).
    On the other hand, since $\muv(\Delta_t)=\muv(\Delta_0)T(\eta_t)$, we have
    \begin{align*}
        \spr T(\eta_i)&\geq \frac{\min\{\muv(\Delta_t)_j:1\leq j\leq \# v\}}{\max\{\muv(\Delta_0)_j:1\leq j\leq\# v\}}\geq\frac{C\mus(B(x,t))}{\max\{\muv(\Delta_0^{k})_j:1\leq j\leq\# v,1\leq k\leq 2\}}\\
                      &= C_2 \mus(B(x,t)).
    \end{align*}

    To summarize, we have shown that
    \begin{align*}
        \frac{\log C_2+\log \mus(B(x,t))}{\log c_2+\log t}&\geq \diml\mus(x_t)=\frac{\log \spr T(\eta_t)}{\log L(\eta_t)}\\
                                                          &\geq\frac{\log C_1+\log \mus(B(x,2t))}{\log c_1+\log 2t}.
    \end{align*}

    Let $\alpha=\dimul\mus(x)$ and let $\epsilon>0$ be arbitrary.
    Get some $t_1>0$ such that for all $0<t\leq t_1$,
    \begin{equation*}
        \frac{\log C_2+\log\mus(B(x,t))}{\log c_2 +\log t}\leq\alpha+\epsilon
    \end{equation*}
    and then choose $0<t\leq\min\{t_0,t_1\}$ such that
    \begin{equation*}
        \frac{\log C_1+\log\mus(B(x,2t))}{\log c_1+\log 2t}\geq\alpha-\epsilon.
    \end{equation*}
    Since $\epsilon>0$ was arbitrary, it follows that the set of local dimensions at periodic points is dense in $\{\dimul(x):x\in K_{\ess}\}$.
    The result for lower local dimensions holds identically.
\end{proof}

\subsection{The \texorpdfstring{$L^q$}{Lq}-spectrum, dimension spectrum, and multifractal formalism}
In this section, we show how to extend a result of Feng and Lau~\cite{fl2009} to hold with respect to a larger, more natural class of intervals.

Let $\mu$ be a compactly supported finite Borel measure and let $V\subseteq\R$ be any open set with $\mu(V)>0$.
Then the \defn{$L^q$-spectrum of $\mu$ on $V$}, denoted by $\tau_V(\mu,q)$, is given by
\begin{equation*}
    \tau_V(\mu,q)=\liminf_{t\downarrow 0}\frac{\log\sup \sum_i\mu\bigl(B(x_i,t)\bigr)^q}{\log t}
\end{equation*}
where the supremum is over families of disjoint closed balls $\{B(x_i,t)\}_i$ with $x_i\in \supp\mu$ and $B(x_i,t)\subseteq V$.
A direct application of Hölder's inequality shows that $\tau_V(q)$ is a concave function.
When $V=\R$, we write $\tau(\mu,q)=\tau_{\R}(\mu,q)$.

Since $\tau_V(\mu,q)$ is a concave function in $q$, its \defn{concave conjugate} is given by
\begin{equation*}
    \tau_V^*(\mu,\alpha)\coloneqq \inf\{\alpha q-\tau_V(q):q\in\R\}.
\end{equation*}
We set
\begin{align*}
    D_V(\mu) &=\{\alpha\in\R:\diml\mu(x)=\alpha\text{ for some }x\in K\cap V\}
\end{align*}
and
\begin{equation*}
    K_V(\mu,\alpha)=\{x\in K\cap V:\diml\mu(x)=\alpha\}.
\end{equation*}
Understanding the geometric properties of the sets $K_V(\mu,\alpha)$ is a natural way to understand the structure of $\mu$.

A heuristic relationship between the values if $\dimH K_V(\mu,\alpha)$ and the concave conjugate of $L^q$-spectrum, known as the multifractal formalism, has been studied by many authors (see, for example, \cite{cm1992,fen2003a,fen2009,fl2009,flw2005,hjk+1986,lau1995,ln1999,pat1997,pw1997,shm2005}).
\begin{definition}\label{d:cmf}
    Let $\mu$ be a compactly supported finite Borel measure and let $V\subset\R$ have $\mu(V)>0$.
    We say that the measure $\mu$ satisfies the \defn{complete multifractal formalism} with respect to $V$ if
    \begin{enumerate}[nl,r]
        \item $D_V(\mu)=[\alpha_{\min},\alpha_{\max}]$ where
            \begin{align*}
                \alpha_{\min}&=\lim_{q\to +\infty}\frac{\tau_V(q)}{q} & \alpha_{\max}&=\lim_{q\to -\infty}\frac{\tau_V(q)}{q}.
            \end{align*}
        \item For any $\alpha\in[\alpha_{\min},\alpha_{\max}]$, $\tau_V^*(\alpha)=\dimH K_V(\alpha)$.
    \end{enumerate}
\end{definition}
Note that we do not comment on differentiability of $\tau_V(q)$.
\subsection{Weak regularity and restricting the \texorpdfstring{$L^q$}{Lq}-spectrum}\label{ss:weak-reg}
We now begin the setup for the statement and proof of \cref{ti:multi-wsc}.
In the statement, we are restricting our measure $\mus$ to a set $K\cap E$ where $E$ is a finite union of closed intervals.
In the interior of $E$, this does not cause any problems: in general, if $V$ is any open set, then $\tau_V(\mus,q)\geq\tau(\mus,q)$.
However, the measure of balls centred at the endpoint of a closed interval could be substantially smaller.

For example, suppose $\mus$ is the uniform Cantor measure (corresponding to the IFS $S_1(x)=x/3$ and $S_2(x)=x/3+2/3$ with probabilities $p_1=p_2=1/2$) and $x$ is the point with symbolic representation consisting of increasingly long alternating stretches of $1$s and $2$s.
Then, the one-sided upper local dimensions of $\mus|_{[0,x]}$ at $x$ is not equal to the everywhere constant value of the local dimension of $\mus$.

In this section, we introduce the notion of weak regularity, which ensures that this situation does not happen.
We also prove some results which show that this hypothesis is not too challenging to satisfy in general.

We recall that $E$ is \defn{Ahlfors regular} if there is some $s>0$ and $a,b>0$ such that
\begin{equation*}
    a t^s \leq \mathcal{H}^s(E\cap B(x,t))\leq bt^s
\end{equation*}
for all $x\in E$ and $t$ sufficiently small.
If $K$ is the attractor of an IFS satisfying the weak separation condition, then $K$ is always Ahlfors regular (see, for example, \cite{fhor2015}).
\begin{definition}\label{d:weak-reg}
    We say that a set $E$ is \defn{weakly regular} if there is some $\epsilon>0$ such that for all $t>0$ sufficiently small,
    \begin{equation*}
        E\cap \bigl(B(x,t)\setminus B(x,\epsilon t)\bigr)\neq\emptyset
    \end{equation*}
    for all $x\in E$.
\end{definition}
We begin with the following useful observation.
\begin{lemma}\label{l:boundary-reg}
    Suppose $K$ is Ahlfors regular and $E\subset K$ is compact.
    Then $E$ is weakly regular if and only if the boundary of $E$ (in the topology relative to $K$) is weakly regular.
\end{lemma}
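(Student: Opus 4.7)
The plan is to prove both implications by analysing the distance from points of $E$ to $\partial_K E$, relying on a central sub-lemma and Ahlfors regularity of $K$.

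The sub-lemma asserts that if $x\in E$ and $\dist(x,\partial_K E)>t$, then $K\cap B(x,t)\subseteq E$. Its proof is by contradiction: a putative $y\in(K\setminus E)\cap B(x,t)$, say with $x<y$, gives $s=\inf\{y'\in K\setminus E:x<y'\leq y\}$ which by openness of $K\setminus E$ in $K$ cannot itself lie in $K\setminus E$ (else a $K$-neighbourhood of $s$ would produce $K\setminus E$-points below $s$, contradicting the infimum); hence $s\in E$, and since $s$ is approached from above by points of $K\setminus E$, we get $s\in\partial_K E$ with $|x-s|\leq t$, contradicting the distance hypothesis.

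For the direction $\partial_K E$ weakly regular $\Rightarrow E$ weakly regular, I would fix $x\in E$ and small $t$, set $\delta=\dist(x,\partial_K E)$, and split on whether $\delta>t$. If $\delta>t$, the sub-lemma gives $K\cap B(x,t)\subseteq E$, and Ahlfors regularity ensures $K\cap(B(x,t)\setminus B(x,\epsilon t))$ is non-empty whenever $\epsilon^s<a/b$ in the Ahlfors constants. If $\delta\leq t$, pick $p\in\partial_K E$ realising the distance: take $p$ as the witness when $\delta\geq\epsilon t$, and otherwise apply weak regularity of $\partial_K E$ at $p$ with scale $t-\delta$ and use a triangle inequality to verify that the resulting boundary point lies in $E\cap(B(x,t)\setminus B(x,\epsilon t))$ for $\epsilon$ chosen sufficiently small relative to the weak regularity constant of $\partial_K E$.

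For the converse, $E$ weakly regular $\Rightarrow \partial_K E$ weakly regular, I would proceed by contrapositive. Assuming failure of $\partial_K E$ weak regularity, I obtain $x\in\partial_K E$ and small $t$ with $\partial_K E\cap B(x,t)\subseteq B(x,\epsilon' t)$, then combine the sub-lemma with Ahlfors regularity of $K$ to see that any $E$-point $y$ in the outer annulus has $\dist(y,\partial_K E)$ comparable to $t$, which via the sub-lemma at the right scale forces $E\cap B(x,t)$ to be localised near $B(x,\epsilon' t)$, producing a failure of weak regularity for $E$ itself at a nearby point and scale. The main obstacle is making this cascade quantitatively precise: choosing $\epsilon'$ carefully in terms of the target constant $\epsilon$ and the Ahlfors constants so that the failure of $E$ is uncovered at a consistent scale across all bad configurations $(x,t)$ for $\partial_K E$.
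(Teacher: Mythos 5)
Your treatment of the substantive direction (relative boundary weakly regular $\Rightarrow$ $E$ weakly regular) is essentially the paper's argument: a dichotomy between points $x$ whose small ball meets $K$ only inside $E$ (where Ahlfors regularity of $K$ alone produces a point in the annulus, exactly as in your $\epsilon^s < a/b$ computation) and points lying close to the relative boundary (where weak regularity of the boundary produces the witness after a triangle inequality). Your sub-lemma is just the contrapositive of the paper's implicit claim that a ball containing points of both $E$ and $K\setminus E$ must contain a relative boundary point. One caveat, which the paper's proof shares: that claim is false for a general compact $E\subseteq K$, because a gap of $K$ can separate a piece of $E$ from a piece of $K\setminus E$ without creating any boundary point in the relative topology (e.g.\ a relatively clopen proper piece of $K$ has empty relative boundary, yet balls of moderate radius reach the complement). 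Your proof of the sub-lemma breaks at exactly this point: the infimum $s$ can land in $K\setminus E$ with no points of $K$ immediately below it. This is harmless for the sets the lemma is actually applied to (finite unions of net intervals intersected with $K$, and $t$ sufficiently small), but as stated for arbitrary compact $E$ it is a gap in both your argument and the paper's.

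The genuine problem is the other direction. If \cref{d:weak-reg} is applied literally to the set $\partial E$ (boundary relative to $K$), then the implication ``$E$ weakly regular $\Rightarrow$ $\partial E$ weakly regular'' is simply false: a nonempty finite set is never weakly regular, since the centre $x$ itself lies in $B(x,\epsilon t)$ and there are no other points at small scales; yet one can take $K$ Ahlfors regular and $E=K\cap[0,x_0]$ with $x_0$ a two-sided accumulation point of $K$ approached at multiplicatively dense scales from the left, so that $E$ is weakly regular while its relative boundary is the singleton $\{x_0\}$. So the obstacle you identify in your contrapositive cascade is not a matter of choosing constants carefully --- no choice of $\epsilon'$ can work, and you should not try to complete that argument. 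The paper's ``the forward direction is immediate'' is coherent only under the reading that weak regularity of the boundary means the annulus condition $E\cap\bigl(B(x,t)\setminus B(x,\epsilon t)\bigr)\neq\emptyset$ for all $x$ in the relative boundary, i.e.\ the test points range over the boundary but the witnesses are still drawn from $E$; under that reading the forward direction is a restriction of quantifiers, and it is also the reading consistent with how the conclusion is used in the paper's display $\emptyset\neq E\cap B(y,t/2)\setminus B(y,t\epsilon_0/2)$. Adopt that interpretation, state it, and your case analysis for the other direction then goes through essentially verbatim.
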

\begin{proof}
    The forward direction is immediate.
    Conversely, let $0<\epsilon_0<1$ be the constant from weak regularity of the boundary of $E$.
    Suppose $x$ is in the interior of $E$ relative to $K$ and let $t>0$.
    If $B(x,t\epsilon_0/4)\cap K=B(x,t\epsilon_0/4)\cap E$, then for $\epsilon<\epsilon_0/4$,
    \begin{equation*}
        \mathcal{H}^s\bigl(E\cap B(x,t\epsilon_0/4)\setminus B(x,\epsilon t)\bigr)\geq(a(\epsilon_0/4)^s-\epsilon^s b)t^s>0
    \end{equation*}
    for some $\epsilon>0$ depending only on $a$, $b$, $s$, and $\epsilon_0$.
    Thus $E\cap B(x,t)\setminus B(x,\epsilon t)\neq\emptyset$.
    Otherwise, there is some $y\in B(x,t\epsilon_0/4)$ in the boundary of $E$ so that
    \begin{equation*}
        \emptyset\neq E\cap B(y,t/2)\setminus B(y,t\epsilon_0/2)\subset E\cap B(x,t)\setminus B(x,t\epsilon_0/4).
    \end{equation*}
    as required.
\end{proof}
The main point behind weak regularity is the following lemma.
\begin{lemma}\label{l:br-restriction}
    Let $\mu$ be a Borel probability measure with compact support $K$, and let $V$ be an open set with $\mu(V)>0$.
    Suppose $E\subset V$ is a finite union of closed intervals such that $E\cap K$ is weakly regular.
    Then $\tau_{V}(\mu,q)\leq\tau(\mu|_E,q)$.
\end{lemma}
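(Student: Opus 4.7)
My plan is to reduce the inequality, for all $q\in\R$ simultaneously, to a single structural observation about $F \coloneqq E\cap K$. Since $E$ is a finite union of closed intervals, the boundary $\partial_K F$ of $F$ relative to $K$ is contained in the finite set of interval endpoints. By \cref{l:boundary-reg}, the weak regularity of $F$ forces the weak regularity of $\partial_K F$. However, no nonempty finite set can be weakly regular: for $t$ smaller than the minimum interpoint distance, no point lies in any shell $B(x,t)\setminus B(x,\epsilon t)$ around another point of the set. Hence $\partial_K F = \emptyset$, i.e., $F$ is clopen in $K$.

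Once $F$ is clopen in $K$, compactness produces $\delta_1 > 0$ with $B(x,t)\cap K \subset F$ for all $x\in F$ and $t<\delta_1$; likewise, $E$ being a compact subset of the open set $V$ yields $\delta_2 > 0$ with $B(x,t)\subset V$ for every $x\in E$ and $t<\delta_2$. For $t < \min(\delta_1,\delta_2)$ and $x\in F$ we therefore obtain the exact identity $\mu(B(x,t)) = \mu(B(x,t)\cap K) = \mu|_E(B(x,t))$. Consequently, any disjoint family $\{B(x_i,t)\}_i$ with centers $x_i \in \supp(\mu|_E) \subset F$ is, for small $t$, admissible both in the supremum defining $\tau(\mu|_E,q)$ and, being contained in $V$, in the supremum defining $\tau_V(\mu,q)$, and the two sums $\sum_i \mu(B(x_i,t))^q$ and $\sum_i \mu|_E(B(x_i,t))^q$ coincide. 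Comparing suprema and passing to $\liminf$ of $\log / \log t$ (with the direction reversal from $\log t < 0$) yields $\tau_V(\mu,q) \leq \tau(\mu|_E,q)$ for every $q\in\R$.

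The main obstacle is the clopen reduction: one must be certain that \cref{l:boundary-reg} really rules out a nonempty finite boundary. If that step were unavailable, I would instead split into cases. For $q \geq 0$, the pointwise bound $\mu \geq \mu|_E$ gives $\mu(B)^q \geq \mu|_E(B)^q$ and the argument is immediate with the original family. For $q<0$, I would apply weak regularity at scale $t/2$ to shift each center $x_i$ to a nearby $y_i \in F$ with $\epsilon t/2 \leq |y_i - x_i| \leq t/2$; the balls $B(y_i, \epsilon t/4)$ are then pairwise disjoint (since $|y_i-y_j|\geq |x_i-x_j|-t\geq t$), contained in $B(x_i,t)$, and lie in $V$. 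For centers $x_i$ at distance at least $t$ from $\partial E$, the shifted ball sits inside $E \cap B(x_i,t)$, giving $\mu(B(y_i,\epsilon t/4)) \leq \mu|_E(B(x_i,t))$ and hence $\mu(B(y_i,\epsilon t/4))^q \geq \mu|_E(B(x_i,t))^q$. The remaining $O(|\partial E|)$ boundary-close centers would need to be handled via iterated weak regularity or shown to contribute only a negligible asymptotic correction.
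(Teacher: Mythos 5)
Your primary route rests on a misapplication of \cref{l:boundary-reg}. You read that lemma as asserting that weak regularity of $F=E\cap K$ forces the relative boundary $\partial_K F$ to be weakly regular \emph{as a set in its own right}, and then conclude $\partial_K F=\emptyset$ because no nonempty finite set passes the annulus test. Under that literal reading the lemma would be false: take $K=[0,1]$ and $E=[0,1/2]$, so $E\cap K$ is weakly regular while $\partial_K E=\{1/2\}$ is a singleton, which by your own observation is never weakly regular. What the proof of \cref{l:boundary-reg} actually uses (and what ``the boundary is weakly regular'' means there) is that the annulus condition $E\cap\bigl(B(x,t)\setminus B(x,\epsilon t)\bigr)\neq\emptyset$ holds at points $x$ of the boundary, with the annulus tested against $E$, not against $\partial_K E$; the forward implication therefore says nothing about $\partial_K E$ meeting annuli, and the clopen reduction collapses. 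Its conclusion is also visibly false in the settings where \cref{l:br-restriction} is applied (e.g.\ the sets $K_m=[c,1-c]\subsetneq K=[0,1]$ in the Cantor-type example have nonempty boundary relative to $K$). Note in addition that \cref{l:boundary-reg} assumes $K$ Ahlfors regular, which is not a hypothesis of the present lemma.

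Your fallback is essentially the paper's proof ($q\geq 0$ is immediate; for $q<0$ shift each centre into $E$ using weak regularity), but the case you leave open --- centres within distance $\approx t$ of $\partial E$ --- is precisely the crux and cannot be waved away as a ``negligible asymptotic correction'': for $q<0$ a single ball on which $\mu|_E$ is tiny contributes an arbitrarily large term to the sum. The gap does close with the same shift. Take $t$ small enough that each interval of $E$ has length at least $2t$ and distinct intervals are more than $t$ apart. If $B(x_i,\epsilon t/4)\not\subseteq E$, then $x_i$ is within $\epsilon t/4$ of exactly one endpoint of its interval; weak regularity at scale $t/2$ gives $y_i\in E\cap K$ with $\epsilon t/2\leq|y_i-x_i|\leq t/2$, and $y_i$ necessarily lies in the same interval, on the side away from that endpoint, hence at distance at least $\epsilon t/4$ from $\R\setminus E$. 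Thus $B(y_i,\epsilon t/4)\subseteq E\cap B(x_i,t)$, so
\begin{equation*}
    \mu\bigl(B(y_i,\epsilon t/4)\bigr)=\mu|_E\bigl(B(y_i,\epsilon t/4)\bigr)\leq\mu|_E\bigl(B(x_i,t)\bigr),
\end{equation*}
and for $q<0$ this gives $\mu|_E(B(x_i,t))^q\leq\mu(B(y_i,\epsilon t/4))^q$ for \emph{every} $i$, boundary-close or not; the balls $B(y_i,\epsilon t/4)$ remain pairwise disjoint and lie in $V$, which is exactly what is needed to compare the two suprema and conclude.
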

\begin{proof}
    This follows directly for $q\geq 0$ since for all $t$ sufficiently small, $B(x,t)\subset V$ for any $x\in E\cap K$.

    Otherwise, let $q<0$ and let $t$ be sufficiently small such that each interval in $E$ has length at least $2t$ and $B(x,t)\subset V$ for any $x\in E\cap K$.
    Let $\{B(x_i,t)\}_i$ be an arbitrary centred packing of $E\cap K$.
    By weak regularity, there is some $\epsilon>0$ such that for each $i$, there is some $y_i\in E\cap K$ such that $B(y_i,\epsilon t)\subseteq B(x_i,t)\cap E$.
    Therefore,
    \begin{equation*}
        \sum_i\mu|_E(B(x_i,t))^q\leq\sum_i\mu|_E(B(y_i,\epsilon t))^q=\sum_i \mu(B(y_i,\epsilon t))^q.
    \end{equation*}
    But $\{B(x_i,t)\}_i$ was arbitrary, so the desired result follows.
\end{proof}
We now show that intervals $J$ with $J\cap K$ weakly regular are abundant.
Recall that $F^{(\delta)}$ denotes the (closed) $\delta$-neighbourhood of a set $F$.
\begin{lemma}\label{l:er-extension}
    Let $\{S_i\}_{i\in\mathcal{I}}$ be an IFS satisfying the weak separation condition with attractor $K$ and let $\delta>0$.
    Then if $F\subset K_{\ess}$ is any compact subset, there is a finite union of essential net intervals $E=\Delta_1\cup\cdots\cup\Delta_n$ such that $F\subset E\subset F^{(\delta)}$ and $E\cap K$ is weakly regular.
\end{lemma}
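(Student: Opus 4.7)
My plan is to handle the lemma in two stages: constructing $E$ as a finite cover of $F$ by essential net intervals inside $F^{(\delta)}$, and verifying weak regularity of $E \cap K$.

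For the construction, I would exploit that $K_{\ess}$ is open in $K$ together with compactness of $F$ to obtain some $\delta_1 \in (0, \delta/2]$ such that every point of $K$ within distance $2\delta_1$ of $F$ lies in $K_{\ess}$. Since every symbolic representation of an interior essential point is eventually in the essential class, for any sufficiently fine generation $t$, every $\Delta \in \mathcal{F}_t$ with $\Delta \cap F \neq \emptyset$ is essential and has diameter less than $\delta_1$. Taking $E$ to be the (finite, by compactness of $F$) union of all essential $\Delta \in \mathcal{F}_t$ with $\Delta \cap F \neq \emptyset$ then yields $F \subset E \subset F^{(\delta_1)} \subseteq F^{(\delta)}$ with each component an essential net interval.

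To verify that $E \cap K$ is weakly regular, I would split into cases based on the position of $x \in E \cap K$. If $x$ lies in the $\mathbb{R}$-interior of $E$, then for small $t$ we have $B(x, t) \subset E$, and weak regularity follows from Ahlfors $s$-regularity of $K$ with $s = \dimH K$, valid under the weak separation condition by \cite[Thm.~2.1]{fhor2015}: choosing $\epsilon^s < a/b$ in terms of the Ahlfors constants yields the required condition. The harder case is when $x$ is an outer endpoint of some $\Delta \subset E$; weak regularity at $x$ then reduces to a one-sided mass estimate of the form $\mathcal{H}^s(\Delta \cap K \cap [x-t, x-\epsilon t]) \gtrsim t^s$ for small $t$. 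I would obtain this by identifying a neighbour $f \in \vs(\Delta)$ with $T_\Delta \circ f(\{0, 1\}) \ni x$ and $T_\Delta \circ f([0,1])$ extending into $\Delta$; given such $f$, the set $T_\Delta \circ f(K)$ is a rescaled copy of $K$ realising $x$ as the image of $0$ or $1$, whence Ahlfors regularity of $K$ at the endpoints $0, 1$ provides the desired one-sided mass on the $\Delta$-side of $x$.

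The main obstacle is guaranteeing the existence of such a neighbour $f$. This is not automatic from the net-interval construction alone, since in principle all $\sigma \in \Lambda_t$ with $S_\sigma(z) = x$ could have images $S_\sigma([0,1])$ extending only on the outer side of $x$. My plan to handle this is to refine the choice of $E$: at each outer endpoint $x \in K$ of $E$ where no suitable $f$ exists, enlarge $E$ by adjoining the adjacent essential net interval on the outer side, making $x$ an interior endpoint of the enlarged union. The enlargement remains inside $F^{(\delta)}$ thanks to the $\delta_1 < \delta/2$ margin, and terminates after finitely many steps by the finite-complexity bound from \cref{p:wsci} — the weak separation condition caps the number of distinct covering neighbour sets that can arise, so only boundedly many refinements are possible before the outer-endpoint condition is met everywhere.
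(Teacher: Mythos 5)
Your first stage and your treatment of interior points and of ``good'' boundary points are sound and match the paper's strategy: the paper also covers $F$ by essential net intervals of a single sufficiently fine generation $t_1\leq\min\{t_0,\delta/2\}$ (where $F^{(\delta)}\subset U_{t_0}$, the union of interiors of essential net intervals of generation $t_0$), handles interior points by Ahlfors regularity exactly as you describe (this is the content of \cref{l:boundary-reg}), and handles boundary points by exhibiting a scaled copy of $K$ sitting inside $E$ with the boundary point as its extreme point, so that weak regularity of $K$ at $0$ and $1$ transfers over. You have also correctly isolated the real difficulty: a net-interval endpoint $x$ need not be the inward-facing extreme point of any $S_\sigma([0,1])$ with $\sigma\in\Lambda_t$.

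The gap is in your repair of the bad case. First, the appeal to \cref{p:wsci} does not do what you need: that proposition bounds $\#\vsc(\Delta)$ uniformly over $\Delta$, not the number of \emph{distinct} covering neighbour sets --- under the weak separation condition alone the transition graph may a priori be infinite (this is precisely the open conjecture stated in the introduction), so there is no ``finite-complexity bound'' of the kind you invoke. Second, even granting finitely many neighbour-set types, termination does not follow: the outward walk could pass through arbitrarily many net intervals (all of generation $t_1$, hence finitely many in total, but possibly very short) whose outer endpoints all lack a suitable inward neighbour, and nothing prevents it from exiting $F^{(\delta)}$, or the essential region, before a good endpoint is reached; repetition of a neighbour-set type gives no contradiction. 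The paper avoids the issue entirely rather than iterating: for each $\Delta\in\mathcal{E}$ it \emph{chooses} the enlarged interval to be of the form $[S_\sigma(0),S_\tau(1)]$ with $r_\sigma,r_\tau>0$ and $\Delta\subseteq[S_\sigma(0),S_\tau(1)]\subseteq\Delta^{(\delta/2)}$, so that by construction the two relative boundary points are the leftmost point of $S_\sigma(K)$ and the rightmost point of $S_\tau(K)$, with both copies of $K$ pointing inward; such $\sigma,\tau$ exist at every scale because points of the form $S_\sigma(0)$, $r_\sigma>0$, accumulate on $K$ from the appropriate sides. The enlarged interval is then a union of essential net intervals of generation $t$ (since it stays inside $U_{t_0}$), and \cref{l:boundary-reg} applies in one step. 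To repair your argument you should replace the iterative adjoining by this direct choice of endpoints; as written, the termination claim is unsupported and the proof is incomplete.
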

\begin{proof}
    For each $t>0$ set
    \begin{equation*}
        \mathcal{H}_t=\{\Delta\in\mathcal{F}_t:\vs(\Delta)\in V(\mathcal{G}_{\ess})\}
    \end{equation*}
    and let
    \begin{equation*}
        U_t=\Bigl(\bigcup_{\Delta\in\mathcal{H}_t}\Delta\Bigr)^\circ.
    \end{equation*}
    It follows directly from the definition that $K_{\ess}=\bigcup_{t>0}U_t$.
    We may assume $\delta$ is sufficiently small so that $F^{(\delta)}\subset K_{\ess}$.
    Since $F^{(\delta)}$ is compact, get $t_0$ such that $F^{(\delta)}\subset U_{t_0}$, let $t_1=\min\{t_0,\delta/2\}$, and set
    \begin{align*}
        \mathcal{E} &=\{\Delta\in\mathcal{H}_{t_1}:\Delta\cap F\neq\emptyset\},\\
        E_0&=\bigcup_{\Delta\in\mathcal{E}}\Delta.
    \end{align*}
    Note that $F\subset E_0\subset F^{(\delta/2)}$ since $\diam(\Delta)\leq t_1$ for any $\Delta\in\mathcal{F}_{t_1}$.
    Now for each $\Delta\in\mathcal{E}$, get $0<t\leq t_1$ and $\sigma,\tau\in\Lambda_t$ such that $r_\sigma,r_\tau>0$ and $\Delta\subseteq[S_\sigma(0),S_\tau(1)]\subseteq \Delta^{(\delta/2)}$.
    Finally, set
    \begin{equation*}
        \mathcal{E}_\Delta=\{\Delta'\in\mathcal{F}_t:\Delta'\subset[S_{\sigma}(0),S_\tau(1)]\}.
    \end{equation*}
    Observe that
    \begin{equation*}
        K\cap\bigcup_{\Delta'\in\mathcal{E}_\Delta}\Delta'=K\cap[S_\sigma(0),S_\tau(1)]\subset F^{(\delta)}
    \end{equation*}
    is weakly regular by \cref{l:boundary-reg}.
    Moreover, since $F^{(\delta)}\subset U_{t_0}$, each $\Delta'\in\mathcal{E}_\Delta$ is essential.
    Thus since a union of weakly regular sets is again weakly regular,
    \begin{equation*}
        E=\bigcup_{\Delta\in\mathcal{E}}\bigcup_{\Delta'\in\mathcal{E}_\Delta}\Delta
    \end{equation*}
    satisfies the requirements.
\end{proof}
We conclude this section with the following observation.
\begin{lemma}\label{l:equi-reg}
    Let $\{S_i\}_{i\in\mathcal{I}}$ be any equicontractive IFS satisfying the weak separation condition.
    If $E$ is any finite union of net intervals such that $E\cap K$ contains no isolated points, then $E$ is weakly regular.
\end{lemma}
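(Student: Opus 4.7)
The plan is to reduce weak regularity of $E \cap K$ to a verification at finitely many boundary points, and then to exploit the rigidity of the equicontractive net interval construction under WSC. Under the weak separation condition $K$ is Ahlfors regular, so \cref{l:boundary-reg} allows me to replace the condition ``weakly regular at every $x \in E \cap K$'' by ``weakly regular at every $x$ in the boundary of $E \cap K$ relative to $K$''. Because $E$ is a finite union of closed intervals in $\R$ this relative boundary is a finite subset of the endpoints of the constituent net intervals, so only finitely many points must be handled and a single uniform $\epsilon$ will suffice.

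Fix such a boundary point $x$. By hypothesis $x$ is not isolated in $E \cap K$, so after a reflection I may assume points of $E \cap K$ accumulate at $x$ from the right; in particular $x$ is the left endpoint of some $\Delta_i \subseteq E$, and $K$ accumulates at $x$ from the right as well. For every $n$ at least as large as the common generation exponent of the $\Delta_i$ (which may be arranged by passing to children), there is then a well-defined net interval $\Delta^{(n)} \in \mathcal{F}_{r^n}$ with $x$ as its left endpoint, and $\Delta^{(n)} \subseteq \Delta_i \subseteq E$. Its right endpoint $y_n$ is the image of $0$ or $1$ under some $S_\sigma$, hence lies in $K$, so $y_n \in E \cap K$.

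It remains to control the scale $|y_n - x|$. In the equicontractive case $\tg(\Delta^{(n)}) = r^{n+1}$, so $|y_n - x| = \diam(\Delta^{(n)}) = r^{n+1}/\lm(\Delta^{(n)})$. Since every generator $\sigma$ of a neighbour satisfies $S_\sigma([0,1]) \supseteq \Delta$, one has $\lm(\Delta) \geq 1$ unconditionally, while WSC forces the set of neighbour sets to be finite and hence $M := \sup_v \lm(v) < \infty$. Thus $|y_n - x| \in [r^{n+1}/M,\, r^{n+1}]$, and for any small $t$ the choice of $n$ with $r^{n+1} \leq t < r^n$ yields $rt/M \leq |y_n - x| \leq t$. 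This gives weak regularity at $x$ with the uniform constant $\epsilon = r/M$. The only subtlety, rather than a genuine obstacle, is the boundary bookkeeping (verifying that $\Delta^{(n)}$ is a bona fide net interval for large $n$, which follows from the non-isolation of $x$ in $K$ from the accumulation side, and that $y_n$ lies on the side of $x$ contained in $E$); the essential substantive input is the uniform bound $M$ furnished by WSC.
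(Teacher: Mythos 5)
Your reduction to finitely many boundary points via \cref{l:boundary-reg}, and the construction of the nested net intervals $\Delta^{(n)}$ with left endpoint $x$ and right endpoint $y_n\in E\cap K$, are both sound. The gap is in the one step that carries all the quantitative content: the lower bound $|y_n-x|=\diam(\Delta^{(n)})\geq r^{n+1}/M$ with $M=\sup_v\lm(v)<\infty$. You justify $M<\infty$ by asserting that ``WSC forces the set of neighbour sets to be finite,'' but that implication is exactly the conjecture stated in the introduction of this paper: it is known only when $K$ is convex (\cite[Thm.~4.4]{hhr2021}, \cite{fen2016}), and the lemma is stated for an arbitrary equicontractive IFS with WSC, where $K$ may be a Cantor set. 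Even the weaker statement you actually need, $\sup_\Delta\lm(\Delta)<\infty$ (equivalently, $\diam(\Delta)\gtrsim t$ uniformly for $\Delta\in\mathcal{F}_t$), is not available from WSC by any argument in the paper: in the equicontractive case Zerner's theorem gives a uniform separation $|S_\sigma(0)-S_\tau(0)|\geq \epsilon r^{n}$ for \emph{distinct same-type} endpoints of same-length words, but the diameter of a net interval is governed by distances of the form $|S_\sigma(0)-S_\tau(1)|$, i.e.\ by whether $S_\sigma^{-1}S_\tau$ accumulates at $x\mapsto x-1$ rather than at the identity, and WSC says nothing about that. Without the bound on $M$, the ratios $(y_n-x)/(y_{n+1}-x)$ need not be bounded, and then for $t$ chosen between consecutive gaps your exhibited points $y_n$ all fall outside the annulus $B(x,t)\setminus B(x,\epsilon t)$, so weak regularity does not follow.

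The paper's proof is arranged precisely to avoid this issue: it argues by contradiction, assuming a large relative gap $[x-r^{k_n},x-\epsilon_nr^{k_n})$ free of $K$, and uses the accumulation of $K$ at $x$ to produce a word $\tau_n$ of length $k_n$ whose cylinder endpoint is forced into the remaining sliver, yielding two \emph{distinct} maps with $|S_{\tau_n}(0)-S_{\sigma_n}(0)|\leq\epsilon_nr^{k_n}$ --- a same-type comparison that directly contradicts WSC via \cite[Thm.~1]{zer1996}. If you want to keep your direct approach, you would need to either restrict to the case where the finite neighbour condition is known to hold, or replace the appeal to $M<\infty$ by a contradiction argument of this Zerner type.
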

\begin{proof}
    Write $S_i(x)=r x+d_i$ where $0<r<1$.
    It suffices to prove that $[0,x]\cap K$ and $[x,1]\cap K$ are weakly regular for any $x=S_\sigma(z)$ where $\sigma\in\mathcal{I}^*$ and $z\in\{0,1\}$, where $x$ is not an isolated point of $[0,x]\cap K$ or $[x,1]\cap K$.
    We will prove the case $[0,S_\sigma(0)]\cap K$; the remaining cases are either analogous or easier.

    Suppose for contradiction $[0,S_\sigma(0)]$ is not weakly regular and get indices $(k_n)_{n=1}^\infty$ and a sequence $(\epsilon_n)_{n=1}^\infty$ converging monotonically to zero such that
    \begin{equation*}
        [S_\sigma(0)-r^{k_n},S_\sigma(0)-\epsilon_n r^{k_n})\cap K=\emptyset
    \end{equation*}
    for each $n\in\N$.
    Since $S_\sigma(0)$ is an accumulation point from the right, there is some $\tau_n\in\mathcal{I}^{k_n}$ such that $S_{\tau_n}([0,1])\supseteq[S_\sigma(0)-\delta,S_\sigma(0)]$ for some $\delta>0$ sufficiently small.
    But $S_{\tau_n}(\{0,1\})\cap[S_\sigma(0)-r^{k_n},S_\sigma(0)-\epsilon_n r^{k_n})=\emptyset$, which forces
    \begin{equation*}
        |S_{\tau_n}(0)-S_{\sigma_n}(0)|\leq\epsilon_n r^{k_n}.
    \end{equation*}
    where $\sigma_n\in\mathcal{I}^{k_n}$ is the word with $\sigma$ as a prefix and $S_{\sigma_n}(0)=S_\sigma(0)$.
    This contradicts the weak separation condition by \cite[Thm.~1]{zer1996}.
\end{proof}
\begin{remark}
    In the general case, the same argument gives that $x\mapsto\lambda x$ for some $\lambda\neq 0$ is an accumulation point of $\{S_\sigma^{-1}\circ S_\tau:\sigma,\tau\in\mathcal{I}^*\}$ in the topology of pointwise convergence.
    The equicontractive assumption gives that $\lambda=1$, but it is unclear how to guarantee this in general.
\end{remark}
\subsection{Multifractal formalism for the essential class}
We now prove the multifractal formalism for the essential class.
We begin with the following result, which is contained in \cite[Thm.~5.4]{fl2009}:
\begin{proposition}[\cite{fl2009}]\label{p:multi-wsc}
    Let $\{S_i\}_{i\in\mathcal{I}}$ be an IFS satisfying the weak separation condition and let $\mus$ be an associated self-similar measure.
    Let $U_0$ be any open ball which attains the maximal value in \cref{e:wsc-max}.
    Then $\mus$ satisfies the complete multifractal formalism with respect to $U_0$.
\end{proposition}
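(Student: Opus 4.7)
The plan is to derive this proposition directly as a translation of Feng and Lau's [Thm.~5.4]{fl2009}, so the main work is in matching definitions rather than proving anything new. First I would recall the statement of that theorem: under the weak separation condition, for any open ball $U_0$ attaining the maximum in \cref{e:wsc-max} and any $\alpha$ in the effective domain of the concave conjugate of the restricted $L^q$-spectrum, one has $\dimH K_{U_0}(\mus,\alpha) = \tau_{U_0}^*(\mus,\alpha)$. Feng and Lau state their theorem in terms of a partition function involving sums of the form $\sum_i \mus(B(x_i,t))^q$ over disjoint balls with $x_i \in K \cap U_0$ and $B(x_i,t) \subset U_0$, which coincides (up to the usual $\liminf$/$\log t$ normalization) with our definition of $\tau_{U_0}(\mus,q)$.

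Next, I would split the verification of \cref{d:cmf} into its two parts. Part (ii)—namely $\tau_{U_0}^*(\alpha) = \dimH K_{U_0}(\alpha)$ on $[\alpha_{\min}, \alpha_{\max}]$—is exactly the Feng--Lau conclusion once the domain is identified; indeed, for a concave function the effective domain of the conjugate is precisely the interval with endpoints the asymptotic slopes $\alpha_{\min} = \lim_{q \to +\infty} \tau_{U_0}(q)/q$ and $\alpha_{\max} = \lim_{q \to -\infty} \tau_{U_0}(q)/q$. For part (i), the inclusion $D_{U_0}(\mus) \subseteq [\alpha_{\min},\alpha_{\max}]$ follows from the standard pointwise bound $\tau_{U_0}(q) \leq q\alpha$ whenever $\alpha$ is realized as a local dimension at some $x \in K \cap U_0$ (applying the definition of $\tau_{U_0}$ to packings concentrated near $x$); the reverse inclusion follows from part (ii), since $\tau_{U_0}^*(\alpha) \geq 0$ throughout $[\alpha_{\min},\alpha_{\max}]$ forces $K_{U_0}(\alpha) \neq \emptyset$.

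The one point requiring brief attention is the following compatibility: Feng and Lau's statement is phrased for a fixed open set attaining the WSC maximum, and they define restrictions at the level of the packing rather than by restricting the measure. Since both our $\tau_{U_0}$ and their partition function use packings of $U_0$, the two definitions agree verbatim, and no measure-restriction argument is needed at this stage. (Such restriction arguments, used to pass to compact subsets later in the paper, depend on the weak regularity analysis of \cref{l:br-restriction}, but are not needed here.)

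The main obstacle, if any, is purely expository: ensuring that the variety of slightly different conventions in the literature—open versus closed balls, $t \leq |r_\sigma|$ versus strict inequality, and the precise normalization of the partition function—line up with the definitions fixed in \cref{s:net-iv} and \cref{d:cmf}. Once this bookkeeping is done, the proposition is immediate from [Thm.~5.4]{fl2009}.
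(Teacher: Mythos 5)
Your proposal matches the paper exactly: the paper gives no independent proof of this proposition, stating only that it ``is contained in [Thm.~5.4]{fl2009}'', which is precisely the citation-plus-definition-matching route you take. The extra bookkeeping you supply (identifying $[\alpha_{\min},\alpha_{\max}]$ with the effective domain of $\tau_{U_0}^*$ and deducing part (i) of \cref{d:cmf} from part (ii)) is sound and only makes explicit what the paper leaves implicit.
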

Using the notion of the essential class, we can obtain a strictly stronger extension of this proposition.
We first note the following straightforward lemma:
\begin{lemma}[\cite{fl2009}]\label{l:u0-sig}
    Let $\{S_i\}_{i\in\mathcal{I}}$ be an IFS satisfying the weak separation condition and let $\mus$ be an associated self-similar measure.
    Let $U_0$ be any open ball which attains the maximal value in \cref{e:wsc-max}.
    Then if $\sigma\in\mathcal{I}^*$ is arbitrary,
    \begin{enumerate}[nl,r]
        \item $\tau_{S_\sigma(U_0)}(\mus,q)=\tau_{U_0}(q)$,
        \item $D_{S_\sigma(U_0)}(\mus)=D_{U_0}(\mus)$, and
        \item $\dimH K_{U_0}(\mus,\alpha)=\dimH K_{S_\sigma(U_0)}(\mus,\alpha)$.
    \end{enumerate}
\end{lemma}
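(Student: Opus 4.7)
By Proposition \ref{p:mx-wsc}(i), $S_\sigma(U_0)$ also attains the maximum in \eqref{e:wsc-max}. The plan is to show that $S_\sigma$ transports the measure-theoretic structure on $U_0$ to that on $S_\sigma(U_0)$ up to multiplicative constants depending only on $\sigma$; since $\tau_V$, $\diml$, and Hausdorff dimension are all defined through logarithmic limits, such constants vanish and all three equalities follow immediately.

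The central technical claim is the following: there exist $c_1, c_2 > 0$ depending only on $\sigma$ such that for every Borel set $E \subseteq U_0$,
\begin{equation*}
    c_1 \mus(E) \leq \mus(S_\sigma(E)) \leq c_2 \mus(E).
\end{equation*}
When $r_\sigma > 0$, Remark \ref{r:im-essential} asserts that for each net interval $\Delta \subseteq U_0$, the image $S_\sigma(\Delta)$ is itself a net interval with $\vs(S_\sigma(\Delta)) = \vs(\Delta)$. Lemma \ref{l:nbdet} then supplies a bijection $g_\Delta \colon \Delta \cap K \to S_\sigma(\Delta) \cap K$ of positive contraction ratio $|r_\sigma|$, hence coinciding with $S_\sigma|_\Delta$, together with comparability of $\mus$ on $\Delta$ and on $S_\sigma(\Delta)$. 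Uniformity of the constants across all choices of $\Delta \subseteq U_0$ follows from the weak separation condition, which by \cref{p:wsci} and \cref{r:cov-nb} bounds the number of distinct neighbour sets in play and hence the number of pairs of comparability constants arising from \cref{l:nbdet}. Since $\mus$ is non-atomic and the interiors of net intervals partition $U_0$ up to a countable set, additivity of $\mus$ extends the comparability to arbitrary Borel sets $E \subseteq U_0$. For $r_\sigma < 0$, one can choose $i \in \mathcal{I}$ so that $r_{\sigma i}$ (or $r_{\sigma \sigma}$, failing that) has positive sign and reduce to the positive case; equivalently, $S_\sigma$ differs from a positive-contraction similarity by the reflection $x \mapsto -x$, to which the same net-interval decomposition applies.

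With the central claim in hand, part (i) follows because any disjoint centred packing $\{B(x_i, t)\}_i$ of $K \cap U_0$ is mapped bijectively by $S_\sigma$ to a disjoint centred packing $\{B(S_\sigma(x_i), |r_\sigma| t)\}_i$ of $K \cap S_\sigma(U_0)$, and the sums $\sum_i \mus(B(x_i,t))^q$ and $\sum_i \mus(B(S_\sigma(x_i), |r_\sigma|t))^q$ differ multiplicatively by at most $\max(c_1^{-1}, c_2)^{|q|}$, which contributes nothing after dividing by $\log t$ and passing to $\liminf$. Part (ii) is immediate from the central claim, which forces $\diml \mus(x) = \diml \mus(S_\sigma(x))$ for every $x \in K \cap U_0$. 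Part (iii) follows since $S_\sigma$ restricted to $K_{U_0}(\mus, \alpha)$ is a bi-Lipschitz bijection onto $K_{S_\sigma(U_0)}(\mus, \alpha)$, and Hausdorff dimension is invariant under bi-Lipschitz maps. The principal obstacle throughout is securing uniform comparability constants in the central claim; this hinges on the finiteness of neighbour sets under the weak separation condition to control the pair-dependent constants arising in \cref{l:nbdet}, and on care with the sign of $r_\sigma$, which Remark \ref{r:im-essential} treats only in the positive case.
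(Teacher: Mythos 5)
Your overall strategy—prove a uniform two-sided comparability $c_1\mus(E)\leq\mus(S_\sigma(E))\leq c_2\mus(E)$ for Borel $E\subseteq U_0$ and then observe that multiplicative constants are invisible to $\tau_V$, $\diml$, and $\dimH$—is exactly the content of the Feng--Lau lemma that the paper simply cites (the paper's ``proof'' is two references to \cite{fl2009}, so you are reconstructing a result the paper does not prove). The lower bound of your central claim is fine (indeed $\mus(S_\sigma(E))\geq p_\sigma\mus(E)$ follows from invariance alone), and parts (i)--(iii) would follow from the central claim much as you describe, granting the identity $K\cap S_\sigma(U_0)=S_\sigma(K\cap U_0)$, which itself needs the maximality of $U_0$.

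The genuine gap is the upper bound, i.e.\ the uniformity of the constants, and the justification you give for it is incorrect on two counts. First, the weak separation condition as used in this paper (\cref{p:wsci}) bounds $\#\vsc(\Delta)$ for each $\Delta$; it does \emph{not} bound the number of distinct neighbour sets --- that is the finite neighbour condition, whose equivalence with the WSC is stated in the introduction only as a conjecture. Second, and more seriously, even if there were finitely many neighbour sets, the constants produced by \cref{l:nbdet} depend on the specific pair of net intervals through the quantities $q_{i,j}=\sum_{\sigma\in\Lambda_{t_j},\,\sigma\text{ generates }f_i}p_\sigma$, which are sums over all words in a given generation representing a fixed similarity; these vary with the generation and are in no way determined by the neighbour set. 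Since $\Delta$ ranges over infinitely many net intervals of all generations inside $U_0$, ``finitely many pairs of comparability constants'' is simply false, and no uniform $c_2$ is obtained. The correct route (Feng--Lau's) uses the maximality of $U_0$ to show that every $\tau\in\Lambda_{|r_\sigma|t}$ with $S_\tau(K)$ meeting $S_\sigma(U_0)$ satisfies $S_\tau=S_{\sigma\omega}$ for some $\omega\in\Lambda_t(U_0)$ (as in the proof of \cref{p:mx-wsc}), and then invokes a WSC counting estimate to bound $\sum_{\tau:\,S_\tau=S_{\sigma\omega}}p_\tau$ by a constant times $p_\sigma\sum_{\phi:\,S_\phi=S_\omega}p_\phi$; this last step is the real work and is absent from your argument. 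Finally, your treatment of $r_\sigma<0$ does not go through as written: appending a letter $i$ replaces $S_\sigma(U_0)$ by the different ball $S_{\sigma i}(U_0)$, and reflecting by $x\mapsto -x$ does not preserve the net interval structure of a non-symmetric IFS; the sign issue is better avoided entirely by proving the central claim directly from the invariance identity $\mus(S_\sigma(E))=\sum_\tau p_\tau\mus(S_\tau^{-1}S_\sigma(E))$, which is indifferent to the sign of $r_\sigma$.
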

\begin{proof}
    Statement (i) is~\cite[Cor.~5.6]{fl2009}.
    Statements (ii) and (iii) are implicit in the usage of~\cite[Lem.~2.5]{fl2009}.
\end{proof}
We obtain the following extension of \cref{p:multi-wsc}.
In light of \cref{p:mx-wsc} and \cref{l:er-extension}, our result is strictly stronger.
\begin{theorem}\label{t:multi-wsc}
    Let $\{S_i\}_{i\in\mathcal{I}}$ be an IFS satisfying the weak separation condition and let $\mus$ be a self-similar measure.
    Let $\Delta_1,\ldots,\Delta_n$ be any essential net intervals such that with $E\coloneqq \Delta_1\cup\cdots\cup\Delta_n$, $E\cap K$ is weakly regular.
    Then with $\nu=\mus|_{E}$,
    \begin{enumerate}[nl,r]
        \item $\nu$ satisfies the complete multifractal formalism,
        \item the set
            \begin{equation*}
                P(\mus)\coloneqq \{\diml\mus(x):x\in K_{\ess},x\text{ periodic}\}
            \end{equation*}
            is dense in $D(\nu)$, and
        \item the sets of local dimensions satisfy
            \begin{align*}
                D(\nu) &= \{\diml\mus(x):x\in K_{\ess},\diml\mus(x)\text{ exists}\}\\
                       &= \{\dimll\mus(x):x\in K_{\ess}\}= \{\dimul\mus(x):x\in K_{\ess}\}.
            \end{align*}
    \end{enumerate}
    Moreover, the values of $\tau(\nu,q)$ do not depend on the choice of $\Delta_1,\ldots,\Delta_n$ and for $q\geq 0$, $\tau(\mus,q)=\tau(\nu,q)$.
\end{theorem}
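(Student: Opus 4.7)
The plan is to reduce \cref{t:multi-wsc} to \cref{p:multi-wsc} (Feng--Lau) by establishing the key identity $\tau(\nu,q)=\tau_{U_0}(\mus,q)$ for any open ball $U_0$ attaining the maximum in \cref{e:wsc-max}. Throughout I use the anti-monotonicity of $\tau_V(\mu,q)$ in $V$: enlarging $V$ enlarges the sup in the numerator, and dividing by $\log t<0$ reverses the inequality after passing to the liminf.

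For the upper bound $\tau(\nu,q)\leq\tau_{U_0}(\mus,q)$: since $\Delta_1\subset E$ is an essential net interval, I pick $\sigma\in\mathcal{I}^*$ with $r_\sigma>0$ such that $S_\sigma(\overline{U_0})\subset\Delta_1\subset E$. By \cref{p:mx-wsc}(i), $S_\sigma(U_0)$ still attains the maximum, and by \cref{l:u0-sig}(i), $\tau_{S_\sigma(U_0)}(\mus,q)=\tau_{U_0}(\mus,q)$. Since $\nu=\mus$ on $S_\sigma(U_0)$, anti-monotonicity gives
\begin{equation*}
\tau(\nu,q)\leq\tau_{S_\sigma(U_0)}(\nu,q)=\tau_{S_\sigma(U_0)}(\mus,q)=\tau_{U_0}(\mus,q).
\end{equation*}

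The lower bound $\tau(\nu,q)\geq\tau_{U_0}(\mus,q)$ is the main obstacle. Weak regularity combined with \cref{l:br-restriction} gives $\tau_{E^{(\delta)}}(\mus,q)\leq\tau(\nu,q)$ for small $\delta>0$, so it suffices to establish $\tau_{E^{(\delta)}}(\mus,q)\geq\tau_{U_0}(\mus,q)$, which amounts to transporting packings of $E^{(\delta)}$ into packings of $U_0$ with comparable $L^q$-sums. Using \cref{l:bmeas}, for every ball $B(x,t)$ centred at $x\in E^{(\delta)}\cap K_{\ess}$ one obtains a net interval $\Delta\subset B(x,2t)$ of generation comparable to $t$ with any prescribed essential neighbour set $v$ and every coordinate of $\muv(\Delta)$ bounded below by $C\mus(B(x,t))$. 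Choosing $v$ to be the neighbour set of a fixed essential net interval $\Delta_0$ deeply contained in $U_0$, and invoking the matrix product formula \cref{t:approx} together with the strong connectivity of $\mathcal{G}_{\ess}$, this links each such $B(x,t)$ to a subset of $U_0$ with comparable $\mus$-measure. A geometric disjointification argument (analogous to the one used in \cref{t:ess-meas}) then transports the entire packing into $U_0$, giving the desired inequality. This step is essentially an analogue in our matrix-product framework of the quasi-product property of Feng--Lau.

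With the identity $\tau(\nu,q)=\tau_{U_0}(\mus,q)$ in hand, (i) follows from \cref{p:multi-wsc}, and the invariance of $\tau(\nu,q)$ under the choice of $\Delta_i$ is automatic. For (iii), \cref{l:ldapprox} shows that for $x$ in the topological interior of $E\cap K$, $\diml\nu(x)=\diml\mus(x)$ and similarly for upper and lower dimensions; together with \cref{l:u0-sig}(ii)--(iii) and the multifractal formalism for $\mus|_{U_0}$, this identifies $D(\nu)$ with each of the stated essential local dimension sets. Part (ii) is then \cref{t:per-dens} applied to $K_{\ess}$. Finally, for $q\geq 0$, $\tau(\nu,q)\geq\tau(\mus,q)$ follows from $\nu\leq\mus$ and anti-monotonicity in the measure for non-negative $q$, while the reverse inequality combines the identity with the Feng--Lau observation that $\tau(\mus,q)=\tau_{U_0}(\mus,q)$ in this range.
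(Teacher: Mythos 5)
Your overall strategy---reducing to \cref{p:multi-wsc} by proving $\tau(\nu,q)=\tau_{U_0}(\mus,q)$---is the same as the paper's, and your treatment of the easy inequality $\tau(\nu,q)\leq\tau_{U_0}(\mus,q)$ and of the case $q\geq 0$ is correct. The gap is in the hard direction for $q<0$. You propose applying \cref{l:bmeas} once per ball $B(x_j,t)$ of a packing of $E^{(\delta)}$ to produce net intervals $\Delta^{(j)}\subset B(x_j,2t)$ with a fixed essential neighbour set $v=\vs(\Delta_0)$, $\Delta_0\subset U_0$, and then ``linking'' each to a subset of $U_0$. This fails for three reasons. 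First, the only available link is the similarity of \cref{l:nbdet} from $\Delta^{(j)}$ onto the single fixed $\Delta_0$; all images then land in the same interval $\Delta_0$ at the fixed scale $\approx\diam(\Delta_0)$ rather than at scale $\approx t$, and they are not disjoint, so you do not obtain a packing of $U_0$ at a comparable scale. The disjointification in \cref{t:ess-meas} selects a disjoint subfamily of the $\Delta^{(j)}$ \emph{in situ}; it does nothing to make their images inside $U_0$ disjoint. Second, for $q<0$ you need the transported ball to have measure bounded \emph{above} by a constant times $\mus(B(x_j,t))$, whereas \cref{l:bmeas} supplies the lower bound $\muv(\Delta^{(j)})_i\geq C\mus(B(x_j,t))$, which is the wrong direction when raised to a negative power. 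Third, the constants in \cref{l:nbdet} depend on the pair of net intervals, and under the weak separation condition alone there may be infinitely many net intervals with neighbour set $v$, so uniformity over $j$ and $t$ is not automatic.

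The device you are missing is to fix, once and for all, pairwise disjoint net intervals $\Delta_1^*,\ldots,\Delta_n^*\subset U_0$ with $\vs(\Delta_i^*)=\vs(\Delta_i)$ (possible because every essential neighbour set is realized inside $U_0$), enlarge them by \cref{l:er-extension} to disjoint weakly regular compacta $F_i\subset U_0$, and use the \emph{finitely many fixed} similarities $g_i:\Delta_i\cap K\to\Delta_i^*\cap K$ of \cref{l:nbdet} to push a packing of $E$ at scale $t$ forward to a packing of $E^*=F_1\cup\cdots\cup F_n\subset U_0$ at scale $\rho_0 t$. Weak regularity of $E\cap K$ places inside each $B(x_j,t)$ a ball $B(y_j,\epsilon_0 t/4)$ contained in a single $\Delta_{i(j)}$, and \cref{l:nbdet} gives $\nu(B(x_j,t))\geq c_1\nu^*\bigl(g_{i(j)}(B(y_j,\epsilon_0 t/4))\bigr)$, which is the correct direction for $q<0$ and preserves both scale and disjointness. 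Combined with $\tau_{U_0}(\mus,q)\leq\tau(\nu^*,q)$ from \cref{l:br-restriction}, this yields $\tau(\nu,q)\geq\tau(\nu^*,q)\geq\tau_{U_0}(\mus,q)$. Separately, your part (iii) is thinner than it needs to be: identifying $D(\nu)$ with the sets of upper and lower local dimensions over all of $K_{\ess}$ (not merely over points of $E$) requires \cref{l:er-extension} together with the density of periodic points from \cref{t:per-dens} and the closedness of $D(\nu)$, not just \cref{l:ldapprox} and \cref{l:u0-sig}.
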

\begin{proof}
    We split the proof into two parts for clarity.
\begin{proofpart}
    The statement (i) holds, the values of $\tau(\nu,q)$ do not depend on the choice of $\Delta_1,\ldots,\Delta_n$, and for $q\geq 0$, $\tau(\mus,q)=\tau(\nu,q)$.
\end{proofpart}
    Let $U_0$ be an open ball which attains the maximal value in \cref{e:wsc-max}.

    To verify (i), by \cref{p:multi-wsc}, it suffices to show that
    \begin{align*}
        \tau_{U_0}(\mus,q)&=\tau(\nu,q) & D_{U_0}(\mus)&=D(\nu) & \dimH K_{U_0}(\mus,\alpha)&=\dimH K(\nu,\alpha).
    \end{align*}

    Let $\sigma$ be such that $S_\sigma(U_0)\subseteq E$, and we see directly from the definitions and \cref{l:u0-sig} that
    \begin{align*}
        \tau_{U_0}(\mus,q)&=\tau_{S_\sigma(U_0)}(\mus,q)\geq\tau(\nu,q)\\
        D_{U_0}(\mus)&=D_{S_\sigma(U_0)}(\mus)\subseteq D(\nu)\\
        \dimH K_{U_0}(\mus,\alpha)&=\dimH K_{S_\sigma(U_0)}(\nu,\alpha) \leq \dimH K(\nu,\alpha).
    \end{align*}
    We now establish the reverse inequalities.

    That $\tau(\mus,q)=\tau_{U_0}(\mus,q)=\tau(\nu,q)$ for $q\geq 0$ is straightforward; see, for example,~\cite[Prop.~3.1]{fl2009}.

    Otherwise, fix $q<0$.
    Since $U_0$ is open and the $\Delta_i$ are essential, there exist net intervals $\Delta_1^*,\ldots,\Delta_n^*$ such that $\vs(\Delta_i)=\vs(\Delta_i^*)$ for each $1\leq i\leq n$ and the $\Delta_i^*$ are pairwise disjoint.
    By \cref{l:er-extension}, there exist compact intervals $F_i\supseteq\Delta_i^*$ such that the $F_i$ are weakly regular, pairwise disjoint, and have $F_i\subset U_0$.
    Set $E^*\coloneqq F_1\cup\cdots\cup F_n$ and let $\nu^*\coloneqq \nu|_{E^*}$.
    Since $E^*$ is weakly regular, it follows that $\tau_{U_0}(\mus,q) \leq \tau(\nu^*,q)$ by \cref{l:br-restriction}.

    It remains to show that $\tau(\nu^*,q)\leq\tau(\nu,q)$ for $q<0$.
    By \cref{l:nbdet}, get similarities $g_i:\Delta_i\cap K\to\Delta_i^*\cap K$ and some $c_1,c_2>0$ such that if $E\subseteq\Delta_i$ is an arbitrary Borel set,
    \begin{equation}\label{e:comp-star}
        c_1\nu^*(g_i(E))\leq \nu(E)\leq c_2\nu^*(g_i(E))
    \end{equation}
    Let each $g_i$ have contraction ratio $\rho_i$.

    Now let $t>0$ be sufficiently small so that $2t\leq\min\{\diam(\Delta_i):1\leq i\leq n\}$ and let $\epsilon_0>0$ be the constant from weak regularity of $E\cap K$.
    Suppose $\{B(x_j,t)\}_{j=1}^m$ is an arbitrary family of disjoint closed balls where $x_j\in E\cap K$.
    For each $j$, there is some $i(j)$ and $y_j$ such that
    \begin{equation}\label{e:yj-map}
        B(y_j,t\epsilon_0/4)\subseteq\Delta_{i(j)}\cap B(x_j,t)
    \end{equation}
    (this must hold for either $y_j=x_j$ or $y_j\in E\cap K\cap B(x_j,t/2)\setminus B(x_j,t\epsilon_0/2)$).

    Now set
    \begin{equation*}
        \rho_0=\frac{\epsilon_0}{4}\min\{\rho_i:1\leq i\leq n\}.
    \end{equation*}
    For each $1\leq j\leq m$, by \cref{e:yj-map},
    \begin{align*}
        \nu(B(x_j,t))&\geq\nu(B(y_j,t\epsilon_0/4) \geq c_1\nu^*(B(g_{i(j)}(y_j),\rho_{i(j)}t\epsilon_0/4))\\
                     &\geq c_1\nu^*(B(g_{i(j)}(y_j),\rho_0 t))
    \end{align*}
    so that $\nu(B(x_j,t))^q\leq c_1^q \nu^*(B(x_j^*,\rho_0 t))^q$ where $x_j^*=g_{i(j)}(y_j)$.
    Observe also that the $B(x_j^*,\rho_0 t)$ are pairwise disjoint.
    But $\{B(x_j,t)\}_{j=1}^m$ was an arbitrary cover, so that
    \begin{equation*}
        \frac{\log \sup\sum_{j}\nu(B(x_j,t))^q}{\log t}\geq \frac{\log c_1^q+\log\sup\sum_{j} \nu^*(B(x_j^*,\rho_0 t))^q}{\log \rho_0^{-1}+\log \rho_0t}.
    \end{equation*}
    Taking limits, it follows that $\tau(\nu,q)\geq\tau(\nu^*,q)$ for $q<0$.

    We now see that $D(\nu)\subseteq D_{U_0}(\mus)$.
    First note that $D_{U_0}(\mus)=[\alpha_{\min},\alpha_{\max}]$ where
    \begin{align*}
        \alpha_{\min} &= \lim_{q\to +\infty}\frac{\tau(\nu,q)}{q}=\lim_{q\to +\infty}\frac{\tau_{U_0}(\mus,q)}{q} & \alpha_{\max}&=\lim_{q\to -\infty}\frac{\tau(\nu,q)}{q}=\lim_{q\to -\infty}\frac{\tau_{U_0}(\mus,q)}{q},
    \end{align*}
    since $\tau(\nu,q)=\tau_{U_0}(\mus,q)$.
    Let $x\in\supp \nu$ be arbitrary with $\alpha=\diml\nu(x)$.
    Then for any $q\in\R$ and $t>0$, we have
    \begin{equation*}
        \log \sup \sum_i \nu(B(x_i,t))^q\geq\log \nu(B(x,t))^q
    \end{equation*}
    where the supremum is over disjoint balls $B(x_i,t)$ with $x_i\in\supp\nu$, and therefore $\tau(\nu,q)\leq q \alpha$.
    Since $\tau(\nu,q)$ is concave, it follows that $\alpha\in[\alpha_{\min},\alpha_{\max}]=D_{U_0}(\mus)$.

    Finally, we verify that $\dimH K(\nu,\alpha)\leq \dimH K_{U_0}(\mus,\alpha)$.
    First note by \cref{e:comp-star} that if $x\in\Delta_i^\circ\cap K$ for some $i$, then $g_i(x)\in(\Delta_i^*)^\circ\cap K\subset U_0$ has
    \begin{equation*}
        \diml \nu(x)=\dim\nu^*(g_i(x))=\diml\mus(g_i(x)).
    \end{equation*}
    Thus $g_i(K(\nu,\alpha)\cap\Delta_i^\circ)\subseteq K_{U_0}(\mus,\alpha)$ and
    \begin{equation*}
        \dimH\Bigl(K(\nu,\alpha)\cap\bigcup_{i=1}^n\Delta_i^\circ\Bigr)\leq\dimH K_{U_0}(\mus,\alpha).
    \end{equation*}
    Since $D(\nu)=D_{U_0}(\mus)$ and $E\setminus\bigcup_{i=1}^n\Delta_i^\circ$ is a finite set (and hence has Hausdorff dimension 0), the result follows.

    Thus the complete multifractal formalism holds.

    Since $U_0$ was fixed, $\tau(\nu,q)$ does not depend on the choice of $\Delta_1,\ldots,\Delta_n$.
\begin{proofpart}
    Statements (ii) and (iii) hold.
\end{proofpart}
    We now see that
    \begin{equation}\label{e:deq}
        D(\nu) = \{\diml\mus(x):x\in K_{\ess},\diml\mus(x)\text{ exists}\}.
    \end{equation}
    If $x\in K_{\ess}$, by \cref{l:er-extension}, there is a weakly regular finite union of essential net intervals $F$ such that $x\in(F\cap K)^\circ$ where we take the interior relative to $K$, and
    \begin{equation*}
        \diml\mus(x)=\diml\mus|_F(x)\in D(\nu)
    \end{equation*}
    since $D(\nu)=D(\mus|_F)$ as proven above.
    Conversely, if $\alpha\in D(\nu)$, then there exists some $y\in U_0$ such that $\diml\mus(y)=\alpha$.
    But $U_0\subseteq K_{\ess}$ by \cref{p:mx-wsc}, so that \cref{e:deq} follows.

    By \cref{t:per-dens}, we have that
    \begin{equation*}
        P(\mus)=\{\diml\mus(x):x\in K_{\ess},x\text{ periodic}\}
    \end{equation*}
    is dense in the set of upper and lower local dimensions in $K_{\ess}$.
    Now $P(\mus)\subseteq D(\nu)$ from \cref{e:deq} and $D(\nu)=[\alpha_{\min},\alpha_{\max}]$ is a closed set with $D(\nu)\subseteq\{\dimul\mus(x):x\in K_{\ess}\}$.
    But again, \cref{t:per-dens} shows that $P(\mus)$ is a dense subset of $\{\dimul\mus(x):x\in K_{\ess}\}$, forcing
    \begin{equation*}
        D(\nu) = \{\dimul\mus(x):x\in K_{\ess}\}.
    \end{equation*}
    Of course, we also have $D(\nu)=\{\dimll\mus(x):x\in K_{\ess}\}$ by the same argument, finishing the proof of the theorem.
\end{proof}
\begin{corollary}\label{c:meas-multi}
    Let $\{S_i\}_{i\in\mathcal{I}}$ be an IFS satisfying the weak separation condition with associated self-similar measure $\mus$.
    Then there exists a sequence of non-empty compact sets $(K_m)_{m=1}^\infty$ with $K_m\subseteq K_{m+1}\subseteq K$ for each $m\in\N$ such that
    \begin{enumerate}[nl,r]
        \item $\lim_{m\to\infty}\mus(K_m)=1$,
        \item each $\mu_m\coloneqq \mus|_{K_m}$ satisfies the complete multifractal formalism, and
        \item $\tau(\mu_m,q)$ and $D(\mu_m)$ do not depend on the index $m$.
    \end{enumerate}
\end{corollary}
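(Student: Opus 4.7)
The plan is to combine \cref{t:ess-meas} with \cref{l:er-extension} and \cref{t:multi-wsc}. First, I would use \cref{t:ess-meas} to conclude that $\mus(K_{\ess}) = 1$. Since $K_{\ess}$ is open in $K$ (and hence Borel), inner regularity of the Borel measure $\mus$ will then yield a nested sequence of non-empty compact sets $F_1 \subseteq F_2 \subseteq \cdots \subseteq K_{\ess}$ with $\mus(F_m) \to 1$.

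Next, for each $m$ I would apply \cref{l:er-extension} to $F_m$ (with any fixed $\delta > 0$) to extract a finite union $E_m$ of essential net intervals with $F_m \subseteq E_m$ and $E_m \cap K$ weakly regular. To ensure nesting, I would set $K_m \coloneqq K \cap \bigcup_{j=1}^m E_j$. The union $\bigcup_{j=1}^m E_j$ is still a finite union of essential net intervals, and since weak regularity is preserved under finite unions (as is used implicitly in the proof of \cref{l:er-extension}), the intersection $K_m$ remains weakly regular. The inclusion $F_m \subseteq K_m$ then forces $\mus(K_m) \to 1$, establishing (i).

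For (ii) and (iii), I would then invoke \cref{t:multi-wsc} directly: viewing $K_m$ as the intersection of $K$ with a finite union of essential net intervals whose union with $K$ is weakly regular, the theorem immediately gives that each $\mu_m \coloneqq \mus|_{K_m}$ satisfies the complete multifractal formalism. The final clause of \cref{t:multi-wsc} guarantees that $\tau(\mu_m, q)$ and $D(\mu_m)$ do not depend on the choice of essential net intervals, and hence do not depend on $m$.

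I do not anticipate any substantive difficulty: the corollary is essentially a packaging of the preceding section's main results, and the key input (a sequence of weakly regular essential neighbourhoods of full $\mus$-measure in the limit) is provided almost verbatim by \cref{l:er-extension}. The only detail worth checking is that the union over $j = 1, \ldots, m$ of finite unions of essential net intervals (possibly drawn from different generations and potentially nested) is itself a finite union of essential net intervals to which \cref{t:multi-wsc} applies, and that weak regularity of such a union follows immediately from weak regularity of each piece — both of which are routine.
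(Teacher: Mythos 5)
Your proposal is correct and follows essentially the same route as the paper: apply \cref{t:ess-meas} and inner regularity to get nested compacts $F_m\subset K_{\ess}$ with $\mus(F_m)\to 1$, enlarge each to a weakly regular finite union of essential net intervals via \cref{l:er-extension}, and conclude with \cref{t:multi-wsc}. Your extra step of taking cumulative unions $\bigcup_{j=1}^m E_j$ to guarantee $K_m\subseteq K_{m+1}$ is a sensible (and slightly more careful) touch, since nesting of the $F_m$ alone does not force nesting of the enlargements; weak regularity of the cumulative union follows from the closure of weak regularity under finite unions already noted in the proof of \cref{l:er-extension}.
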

\begin{proof}
    Since $\mus$ is Borel and $K_{\ess}$ is a relatively open subset of $K$ with $\mus(K_{\ess})=1$ by \cref{t:ess-meas}, there exists a nested sequence of compact sets $(F_m)_{m=1}^\infty$ with $F_m\subset K_{\ess}$ such that $\lim_{m\to\infty}\mu(F_m)=1$.
    Let $K_m\supseteq F_m$ be a finite union of essential net intervals given by \cref{l:er-extension}.
    Then by \cref{t:multi-wsc}, each $\mu_m\coloneqq \mus|_{K_m}$ satisfies the complete multifractal formalism and $\tau(\mu_m,q)$ and $D(\mu_m)$ do not depend in the index $m$, as required.
\end{proof}
In some situations, the above theorem can also be used to verify that the complete multifractal formalism holds with respect to the invariant measure $\mus$.
\begin{corollary}\label{c:nl-multi}
    Suppose $\{S_i\}_{i\in\mathcal{I}}$ is an IFS satisfying the weak separation condition with transition graph $\mathcal{G}$.
    Suppose there is a bound on the maximum length of a path with no vertices in the essential class.
    Then if $\mus$ is any associated self-similar measure, $\mus$ satisfies the complete multifractal formalism and the local dimensions at periodic points are dense in the set of all local dimensions in $K$.
\end{corollary}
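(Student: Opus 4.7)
The approach is to use the bounded-length hypothesis to reduce the general statement to \cref{t:multi-wsc} applied to a set $E$ which covers all of $K$.

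I will first show that $K = K_{\ess}$. Let $N$ be a uniform bound on the length of any path in $\mathcal{G}$ with no vertices in the essential class. For any $x \in K$ and any symbolic representation $(e_j)_{j=1}^\infty$ of $x$, the prefix $(e_1,\ldots,e_{N+1})$ has length exceeding $N$, so it must contain some vertex in $V(\mathcal{G}_{\ess})$. By definition of the essential class, once a path enters $V(\mathcal{G}_{\ess})$ it cannot exit, so every edge from some index onward lies in $E(\mathcal{G}_{\ess})$. Hence $x \in K_{\ess}$.

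I would then locate a generation $t_0 > 0$ such that every $\Delta \in \mathcal{F}_{t_0}$ is essential. Any net interval $\Delta$ with $\vs(\Delta)$ non-essential must be reachable from $[0,1]$ by a path of length at most $N$; otherwise such a path would be forced to pass through the essential class before reaching $\Delta$, contradicting the stickiness of $V(\mathcal{G}_{\ess})$. Because the transition graph is locally finite---each net interval has only finitely many children---the set of non-essential vertices is finite, and hence only finitely many net intervals are non-essential. Taking $t_0$ smaller than the ancestral generation of each such net interval forces every $\Delta \in \mathcal{F}_{t_0}$ to be essential.

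Setting $E = \bigcup_{\Delta \in \mathcal{F}_{t_0}} \Delta$ yields a finite union of essential net intervals with $K \subseteq E$, so $\mus|_E = \mus$. To invoke \cref{t:multi-wsc}, I need $E \cap K = K$ to be weakly regular. Since the weak separation condition implies that $K$ is Ahlfors regular with some exponent $s = \dimH K$ and constants $0 < a \leq b$, choosing $\epsilon < (a/b)^{1/s}$ yields $\mathcal{H}^s(K \cap B(x,t) \setminus B(x,\epsilon t)) \geq (a - b\epsilon^s)t^s > 0$ for all $x \in K$ and sufficiently small $t$, establishing weak regularity. Then \cref{t:multi-wsc} delivers that $\mus$ satisfies the complete multifractal formalism, and part (ii) of that theorem combined with $K = K_{\ess}$ gives density of the periodic-point local dimensions in $D(\mus)$.

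The main obstacle is the combinatorial reduction: verifying that the hypothesis propagates through the transition graph to force $K = K_{\ess}$ and to guarantee a generation beyond which every net interval is essential. Once this structural reduction is in place, the corollary is essentially a direct application of \cref{t:multi-wsc} together with the Ahlfors regularity of the attractor.
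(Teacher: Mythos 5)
Your approach is essentially the paper's: show that every net interval of sufficiently small generation is essential (using that a non-essential net interval has a symbolic representation entirely outside the essential class, hence of length at most the bound $M$), so that $K$ is covered by a finite union of essential net intervals, and then apply \cref{t:multi-wsc}. The paper reaches the generation directly as $r_{\min}^M$ from $L(e)\geq r_{\min}$, whereas you pass through the finiteness of the set of non-essential net intervals; both work. One small slip in your construction of $t_0$: to exclude a non-essential net interval $\Delta$ from $\mathcal{F}_{t_0}$ you must take $t_0\leq\tg(\Delta)$, not merely $t_0$ smaller than the ancestral generation, since $\Delta\in\mathcal{F}_s$ for every $s\in(\tg(\Delta),\ag(\Delta)]$; taking $t_0$ to be the minimum of the transition generations of the finitely many non-essential net intervals repairs this. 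Your explicit verification that $K$ itself is weakly regular via Ahlfors regularity, and that $K=K_{\ess}$, are correct details that the paper leaves implicit.
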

\begin{proof}
    If $M$ is the bound on the maximum length of a path, since $L(e)\geq r_{\min}$ for any $e\in E(\mathcal{G})$, we have that any net interval in $\mathcal{F}_{r_{\min}^M}$ is an essential net interval.
    In particular, $\supp\mus$ is contained in a finite union of essential net intervals, which is automatically boundary regular.
    Apply \cref{t:multi-wsc}.
\end{proof}
\begin{remark}
    For example, if the neighbour set $\vs([0,1])=\{x\mapsto x\}$ is contained in the essential class, then $\mathcal{G}=\mathcal{G}_{\ess}$ and the conditions for the \cref{c:nl-multi} are satisfied.
\end{remark}
\begin{corollary}
    Suppose $\{S_i\}_{i\in\mathcal{I}}$ is an IFS such that the associated transition graph $\mathcal{G}$ is finite.
    Suppose that any cycle in $\mathcal{G}$ is contained in the essential class.
    Then if $\mus$ is any associated self-similar measure, $\mus$ satisfies the complete multifractal formalism and the local dimensions at periodic points are dense in the set of all local dimensions in $K$.
\end{corollary}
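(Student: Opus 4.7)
The plan is to deduce this corollary from \cref{c:nl-multi} by verifying both of its hypotheses for the IFS: the weak separation condition, and the existence of a uniform bound on the length of admissible paths in $\mathcal{G}$ none of whose vertices lies in the essential class.

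For the weak separation condition, I would invoke the equivalence, recorded in the introduction and proved in \cite[Thm.~3.4]{hhr2021}, between finiteness of $\mathcal{G}$ and the generalized finite type condition of Lau and Ngai holding with respect to an open interval. Since generalized finite type is well known to imply the weak separation condition, the first hypothesis of \cref{c:nl-multi} is met.

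For the bound on path lengths, I would proceed by a straightforward pigeonhole argument. Let $\eta = (e_1, \ldots, e_n)$ be any admissible path whose vertex sequence $v_0, v_1, \ldots, v_n$ lies entirely outside $V(\mathcal{G}_{\ess})$. If $n \geq \#V(\mathcal{G})$, then the $n+1$ vertices cannot all be distinct, so there exist $0 \leq i < j \leq n$ with $v_i = v_j$; the subpath $(e_{i+1}, \ldots, e_j)$ is then a cycle in $\mathcal{G}$ whose vertices $v_i, v_{i+1}, \ldots, v_j$ all lie outside $V(\mathcal{G}_{\ess})$. This contradicts the hypothesis that every cycle of $\mathcal{G}$ is contained in the essential class. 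Consequently every such path has length strictly less than $\#V(\mathcal{G})$, which furnishes the required uniform bound.

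With both hypotheses verified, \cref{c:nl-multi} applies and delivers the complete multifractal formalism for $\mus$ together with the density of local dimensions at periodic points in the full set of local dimensions on $K$. The only step that is not purely combinatorial is the invocation of the chain asserting that finite $\mathcal{G}$ implies generalized finite type, which in turn implies the weak separation condition; once this is granted, the rest of the argument reduces to the elementary observation above that finiteness of $\mathcal{G}$ together with the cycle hypothesis forbids arbitrarily long simple-vertex excursions outside $\mathcal{G}_{\ess}$.
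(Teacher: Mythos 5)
Your proposal is correct and follows essentially the same route as the paper, whose proof is the one-line observation that for finite $\mathcal{G}$ the bounded-path hypothesis of \cref{c:nl-multi} is equivalent to the no-cycles-outside-the-essential-class hypothesis; your pigeonhole argument simply makes that equivalence explicit. You are slightly more careful than the paper in also verifying the weak separation condition (via finiteness of $\mathcal{G}$ implying generalized finite type), a step the paper leaves implicit here and only records in the following section.
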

\begin{proof}
    When $\mathcal{G}$ is finite, the assumption in \cref{c:nl-multi} is equivalent to the assumption that any cycle is contained in the essential class.
\end{proof}

\section{The finite neighbour condition and examples}\label{s:fn-ex}
\subsection{The finite neighbour condition}
Let $\{S_i\}_{i\in\mathcal{I}}$ be an IFS as in \cref{e:ifs}.
The finite neighbour condition was defined in~\cite{hhr2021} in a way following naturally from the finite type conditions studied in the literature~\cite{ln2007,nw2001}.
\begin{definition}
    We say that $\{S_i\}_{i\in\mathcal{I}}$ satisfies the \defn{finite neighbour condition} if there are only finitely many neighbour sets.
    Equivalently, its transition graph $\mathcal{G}$ is finite.
\end{definition}
\begin{remark}\label{r:fnc-d}
    The definition of a neighbour in \cref{d:nb} differs slightly from \cite[Def'n.~2.7]{hhr2021}.
    Namely, for a net interval $\Delta\in\mathcal{F}$ and $T\in\vs(\Delta)$, we require $T(K)\cap(0,1)\neq\emptyset$ rather than $T([0,1])\supseteq[0,1]$.
    However, using \cite[Cor.~3.4]{dln2013} with respect to the generation $k_0\coloneqq  r_{\min}/M$ where $M=\sup_{\Delta\in\mathcal{F}}\lm(\Delta)$ and the characterization \cite[Thm~3.4.]{hhr2021}, one can verify that the finiteness assumptions are in fact equivalent.
\end{remark}
It is shown in~\cite{hhr2021} that the finite neighbour condition is equivalent to the generalized finite type condition~\cite{ln2007} holding with respect to the invariant open set $(0,1)$.
Moreover, under the assumption that the attractor $K$ is an interval, it is proven in~\cite{fen2016,hhr2021} that the finite neighbour condition is in fact equivalent to the weak separation condition.
The author is not aware of any IFS of similarities in $\R$ which satisfies the weak separation condition but not the finite neighbour condition.

Of course, when an IFS satisfies the finite neighbour condition, it also satisfies the weak separation condition (see, for example, \cite[Thm.~1.1]{ln2007} or \cite[Thm.~3.7]{hhr2021}) and thus has a unique finite essential class $\mathcal{G}_{\ess}$.
Interestingly, the converse also holds:
\begin{theorem}\label{t:fnc-es}
    The IFS $\{S_i\}_{i\in\mathcal{I}}$ satisfies the finite neighbour condition if and only if $\mathcal{G}(\{S_i\}_{i\in\mathcal{I}})$ has a finite essential class.
\end{theorem}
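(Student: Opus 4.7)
The forward direction is immediate: any essential class is an induced subgraph of $\mathcal{G}$, and \cite[Lem.~1.1]{sen1981} guarantees at least one essential class exists when $\mathcal{G}$ is finite.

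For the converse, assume $\mathcal{G}$ admits a finite essential class $\mathcal{G}_{\ess}$. The plan is to proceed in two stages: first establish the weak separation condition, and then upgrade this to finiteness of all of $\mathcal{G}$.

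\textbf{Stage 1 (WSC from a finite essential class).} By \cref{p:wsci}, WSC is equivalent to $\sup_{\Delta\in\mathcal{F}}\#\vsc(\Delta) < \infty$. I would argue by contradiction: suppose net intervals $\Delta_n$ satisfy $\#\vsc(\Delta_n) \to \infty$. Fix a reference $\Delta^* \in \mathcal{F}$ with $\vs(\Delta^*) \in V(\mathcal{G}_{\ess})$, and for each $n$ select $\sigma_n \in \mathcal{I}^*$ with $r_{\sigma_n} > 0$ and $S_{\sigma_n}([0,1]) \subseteq \Delta_n^\circ$. Paralleling the translation argument of \cref{p:wsc-es}, $S_{\sigma_n}(\Delta^*)$ sits as a sub-net-interval inside $\Delta_n$, and the covering neighbours of $\Delta_n$ restrict to augment the covering neighbour set of this copy. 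Iterating using strong connectivity of $\mathcal{G}_{\ess}$ to transport this augmentation through the cycle structure, an unbounded $\#\vsc(\Delta_n)$ would produce essential net intervals with unboundedly large covering neighbour sets; since by \cref{r:cov-nb} the covering neighbour set determines the neighbour set, this forces infinitely many distinct vertices to appear in $V(\mathcal{G}_{\ess})$, contradicting the assumed finiteness.

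\textbf{Stage 2 (Full finiteness of $\mathcal{G}$).} With WSC established, \cref{p:wsc-es} yields uniqueness of the essential class (necessarily $\mathcal{G}_{\ess}$) and that every $v \in V(\mathcal{G})$ has an admissible path into $V(\mathcal{G}_{\ess})$; by \cref{t:ttype} combined with WSC, the out-degree at each vertex is uniformly bounded. Suppose for contradiction that $V(\mathcal{G})$ is infinite. Since $V(\mathcal{G}_{\ess})$ is finite and absorbing, there are infinitely many distinct non-essential neighbour sets, and an application of König's lemma to the non-essential sub-forest of the descendant tree rooted at $[0,1]$ extracts an infinite descending chain $\Delta_0 \supset \Delta_1 \supset \cdots$ of non-essential net intervals, converging to a single point $x \in K \setminus K_{\ess}$. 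However, by \cref{p:mx-wsc}(i) together with \cref{r:im-essential}, one can find $\tau \in \mathcal{I}^*$ with $r_\tau > 0$ and essential net intervals $S_\tau(\Delta_{\ess})$ lying arbitrarily close to $x$; for sufficiently small scales these must be contained in some $\Delta_n$, forcing essential descendants to occupy the shrinking core of $\Delta_n$ and, upon descending further along the chain, forcing one of the $\Delta_n$ themselves to be essential, the desired contradiction.

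\textbf{Main obstacle.} Stage 1 is the crux. The delicacy is that the hypothesis ``$\mathcal{G}_{\ess}$ is finite'' is phrased via the neighbour set $\vs$, while WSC is characterized through the finer invariant $\vsc$. Since the map $\vsc \mapsto \vs$ is generally many-to-one, a priori finiteness of $V(\mathcal{G}_{\ess})$ does not bound $\#\vsc$ even within the essential class. The careful point is to exploit the translation action $S_{\sigma_n}$ together with the absorbing and strongly-connected structure of $\mathcal{G}_{\ess}$, and the $\vsc \to \vs$ rigidity of \cref{r:cov-nb}, to convert any hypothetical unboundedness of $\vsc$ into the production of new vertices in $V(\mathcal{G}_{\ess})$. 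Once Stage 1 is in place, Stage 2 is a relatively routine compactness argument using the structural tools already established.
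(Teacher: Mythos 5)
The forward direction is fine. The converse, however, has genuine gaps in both stages, and neither stage follows the paper's route. In Stage~1 you correctly identify the crux --- finiteness of $V(\mathcal{G}_{\ess})$ is a statement about $\vs$, while the WSC is controlled by the finer invariant $\vsc$ --- but you never resolve it: producing essential net intervals with unboundedly large covering neighbour sets does \emph{not} force new vertices into $V(\mathcal{G}_{\ess})$, since distinct covering neighbour sets of wildly different cardinalities can project to the same neighbour set; \cref{r:cov-nb} only gives the implication $\vsc(\Delta)=\vsc(\Delta')\Rightarrow\vs(\Delta)=\vs(\Delta')$, which is the unhelpful direction here. Note also that the translation argument of \cref{p:wsc-es} you want to parallel relies on choosing a net interval of \emph{maximal} $\#\vsc$, i.e.\ it already presupposes the WSC you are trying to prove. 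Stage~2 is worse: an infinite descending chain of non-essential net intervals obtained from K\"onig's lemma is not a contradiction at all --- such chains exist even when $\mathcal{G}$ is finite (in the Cantor-convolution examples $K_{\ess}=(0,1)$, so the nested net intervals converging to $0$ are all non-essential, yet the transition graph is finite). The presence of essential net intervals inside each $\Delta_n$ likewise proves nothing, since by \cref{p:wsc-es} \emph{every} net interval has essential descendants; it does not follow that some $\Delta_n$ is itself essential.

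The paper's converse avoids the WSC entirely. It defines, for each pair of neighbour sets $v_1,v_2$, a finite ``join'' set $J(v_1,v_2)$, and shows that an arbitrary $\vs(\Delta_0)$ lies in $J\bigl(\vs(\Delta_a),\vs(\Delta_b)\bigr)$, where $\Delta_a$ and $\Delta_b$ are the leftmost and rightmost essential net intervals of the appropriate generation inside $\Delta_1=S_\sigma(\Delta_0)$, for a suitable $\sigma$ with $r_\sigma>0$ and $S_\sigma([0,1])$ contained in an essential net interval. Since $V(\mathcal{G}_{\ess})$ is finite, so is $\bigcup_{v_1,v_2\in V(\mathcal{G}_{\ess})}J(v_1,v_2)$, and the finite neighbour condition follows directly. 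This reconstruction of $\vs(\Delta_0)$ from purely essential data is the idea your outline is missing, and without it (or something equivalent) neither of your two stages goes through.
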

\begin{proof}
    \impr
    Since the finite neighbour condition implies the weak separation condition, this follows immediately from \cref{p:wsc-es} since $\mathcal{G}$ is a finite graph.

    \impl
    We first define a construction on neighbour sets.
    Let $v_1=\{f_1,\ldots,f_{\ell_1}\}$ and $v_2=\{g_1,\ldots,g_{\ell_2}\}$ be a pair of neighbour sets.
    We denote by $J(v_1,v_2)$ the set of all subsets $w=\{h_1,\ldots,h_m\}$ such that there exist indices $i,j$ and $T=f_i\circ g_j^{-1}$ such that
    \begin{equation*}
        \{T_\Delta\circ h_1,\ldots,T_\Delta\circ h_m\}\subset \{f_1,\ldots,f_{\ell_1}\}
    \end{equation*}
    where $\Delta=\bigl[\min\{0,T(0),T(1)\},\max\{1,T(0),T(1)\}\bigr]$ and $T_\Delta(x)=rx+d$ with $r>0$ where $T_\Delta([0,1])=\Delta$.
    Clearly there are only finitely many functions $T$, so that $J(v_1,v_2)$ is a finite set.
    When $F$ is a finite set, we denote by $J(F)=\bigcup_{v_1,v_2\in F}J(v_1,v_2)$, which is also finite.

    Now, by assumption, $\mathcal{G}$ has a finite essential class $\mathcal{G}_{\ess}$ so that $J_0\coloneqq J(V(\mathcal{G}_{\ess}))$ is finite.
    Let $\Delta_0\in\mathcal{F}_\alpha$ be an arbitrary net interval; we will see that $\vs(\Delta_0)\in J_0$, from which it follows that $\{S_i\}_{i\in\mathcal{I}}$ satisfies the finite neighbour condition.

    First, let $\sigma$ be such that $r_\sigma>0$ and $S_\sigma([0,1])$ is a finite union of essential net intervals (just take $\sigma$ such that $S_\sigma([0,1])$ is contained in some essential net interval; if $r_\sigma<0$, append some $i\in\mathcal{I}$ with $r_i<0$).
    Let $\vs(\Delta_0)$ have neighbours generated by words $\{\omega_1,\ldots,\omega_m\}$ in $\Lambda_\alpha$; note that each $\sigma\omega_i\in\Lambda_{r_\sigma\alpha}$.
    Let $\Delta_1=S_\sigma(\Delta_0)$ and write $\Delta_1=[a,b]$.
    Then there exist essential net intervals $\Delta_a,\Delta_b\in\mathcal{F}_{r_\sigma\alpha}$ such that $\Delta_a=[a,a_0]$ and $\Delta_b=[b_0,b]$; perhaps $\Delta_a=\Delta_b$.
    Note that $\sigma\omega_1$ has $\Delta_a,\Delta_b\subseteq S_{\sigma\omega_1}([0,1])$ since $\Delta_a,\Delta_b\subseteq\Delta_1$ so that $\sigma\omega_1$ generates a neighbour $f_a$ of $\Delta_a$ and $f_b$ of $\Delta_b$.

    We see that $\vs(\Delta_0)$ is a join of $(\vs(\Delta_a),\vs(\Delta_b))$.
    Set $T=f_a\circ f_b^{-1}$.
    We first note that
    \begin{itemize}[nl]
        \item $T_{\Delta_a}\circ f_a=S_{\sigma\omega_1}=T_{\Delta_b}\circ f_b$, so that $T\coloneqq f_a\circ f_b^{-1}=T_{\Delta_b}\circ T_{\Delta_a}^{-1}$ and
        \item $\Delta\coloneqq \bigl[\min\{0,T(0),T(1)\},\max\{1,T(0),T(1)\}\bigr]=T_{\Delta_a}^{-1}(\Delta_1)$ so that $T_\Delta=T_{\Delta_a}^{-1}\circ T_{\Delta_1}$.
    \end{itemize}
    Now let $h\in\mathcal{V}(\Delta_0)$ be arbitrary.
    Since $r_\sigma>0$, $T_{\Delta_1}=S_\sigma\circ T_{\Delta_0}$.
    Then if $h=T_{\Delta_0}^{-1}\circ S_{\omega_i}$, we have
    \begin{equation*}
        T_\Delta\circ h=(T_{\Delta_a}^{-1}\circ T_{\Delta_1})\circ(T_{\Delta_1}^{-1}\circ S_\sigma\circ S_{\omega_i})=T_{\Delta_a}^{-1}\circ S_{\sigma\omega_i},
    \end{equation*}
    where $\sigma\omega_i$ generates a neighbour of $\Delta_a$, and thus $T_\Delta\circ h\in\vs(\Delta_a)$, as required.
\end{proof}
\begin{remark}
    If $v,w\in V(\mathcal{G}_{\ess})$, then there are at most $\# v\cdot \# w$ distinct functions $T$, so that $\# J(v,w)\leq \# v\cdot\# w\cdot 2^{\# v}$.
    Moreover, there are at most $(\# V(\mathcal{G}_{\ess}))^2$ pairs $(v,w)$.
    In particular, if there are $m$ distinct neighbours in $\mathcal{G}_{\ess}$, then $\# V(\mathcal{G}_{\ess})\leq 2^m$ and $\#v\leq m$ for any $v\in V(\mathcal{G}_{\ess})$, so that
    \begin{equation*}
        \# V(\mathcal{G})\leq (\# V(\mathcal{G}_{\ess}))^2\cdot m^2\cdot 2^m\leq m^2 8^{m}.
    \end{equation*}
    Thus the above proof gives a quantitative bound on the size of $\mathcal{G}$ as a function of the number of distinct neighbours in $\mathcal{G}_{\ess}$.
\end{remark}
\subsection{Approximate transition matrices}
Under the finite neighbour condition, we may approximate the transition matrix $T(e)$ by the matrix $T^*(e)$ given by $T^*(e)_{ij}=p_\ell$ in the same context as \cref{e:trmat}.
Since there are only finitely many values $\frac{\mus(f_i^{-1}((0,1))}{\mus(g_j^{-1}((0,1))}$, there exist constants $c_1,c_2>0$ such that $c_1T^*(\eta)\leq T(\eta)\leq c_2T^*(\eta)$ element-wise for any admissible path $\eta$.
Moreover, since $\mus$ is a probability measure, direct computation shows that $\norm{T^*(\eta)}_1\leq\mus(\Delta)$.
Applying \cref{t:approx}, we have:
\begin{corollary}\label{c:fapprox}
    Let $\{S_i\}_{i\in\mathcal{I}}$ be an IFS satisfying the finite neighbour condition with associated self-similar measure $\mus$.
    \begin{itemize}[nl]
        \item There exist constants $c_1,c_2>0$ such that for any path $\eta$ realized by $(\Delta_i)_{i=0}^n$,
            \begin{equation*}
                c_1\muv(\Delta_m)\preccurlyeq T^*(\eta)\muv(\Delta_0)\preccurlyeq c_2\muv(\Delta_m)
            \end{equation*}
            where the inequalities hold pointwise.
        \item There exists a constant $c>0$ such that for any $\Delta\in\mathcal{F}$ with symbolic representation $\eta$,
            \begin{equation*}
                c\mus(\Delta)\leq \norm{T^*(\eta)}_1\leq \mus(\Delta).
            \end{equation*}
    \end{itemize}
\end{corollary}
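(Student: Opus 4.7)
The plan is to exploit a telescoping cancellation of the measure-theoretic factors distinguishing $T(e)$ from $T^*(e)$. From the definition, $T(e)_{ij} = \frac{\mus(g_j^{-1}((0,1)))}{\mus(f_i^{-1}((0,1)))} T^*(e)_{ij}$, so $T$ and $T^*$ differ by a diagonal rescaling indexed by source neighbours on one side and target neighbours on the other. Expanding $T(\eta)=T(e_1)\cdots T(e_n)$ as a sum over intermediate index tuples $(i_0,i_1,\ldots,i_n)$, each intermediate factor $\mus((f^{(k)}_{i_k})^{-1}((0,1)))$ appears once as a numerator and once as a denominator, and hence cancels. This yields the clean path-level identity
\begin{equation*}
T(\eta)_{ij} = \frac{\mus((f^{(n)}_j)^{-1}((0,1)))}{\mus((f^{(0)}_i)^{-1}((0,1)))}\, T^*(\eta)_{ij}.
\end{equation*}
The finite neighbour condition guarantees that the set $\{\mus(f^{-1}((0,1))) : f \in \vs(\Delta),\ \Delta\in\mathcal{F}\}$ is finite, and every such value is strictly positive since $f^{-1}((0,1))\cap K\neq\emptyset$ for any neighbour. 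The boundary ratio is therefore uniformly bounded above and below, giving $c_1 T^*(\eta)\preccurlyeq T(\eta)\preccurlyeq c_2 T^*(\eta)$ entrywise.

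For the first bullet I would apply \cref{t:approx}, which gives $\muv(\Delta_0)T(\eta)=\muv(\Delta_m)$, and substitute the telescoped identity. Using the definition $\muv(\Delta_0)_i=\mus((f^{(0)}_i)^{-1}((0,1)))\beta^{(0)}_i$, the source measure factor cancels the denominator in the ratio to produce
\begin{equation*}
\muv(\Delta_m)_j = \mus\bigl((f^{(n)}_j)^{-1}((0,1))\bigr) \sum_i \beta^{(0)}_i\, T^*(\eta)_{ij},
\end{equation*}
whereas $[\muv(\Delta_0) T^*(\eta)]_j=\sum_i\mus((f^{(0)}_i)^{-1}((0,1)))\beta^{(0)}_i\, T^*(\eta)_{ij}$ directly from the definitions. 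The two vectors differ only in which uniformly bounded $\mus$-factor multiplies each term $\beta^{(0)}_i T^*(\eta)_{ij}$, so the claimed pointwise comparability follows with constants depending only on the range of $\mus(f^{-1}((0,1)))$ across neighbours.

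For the second bullet, take $\eta$ to be the symbolic representation of $\Delta$, so the path begins at $\Delta_0=[0,1]$. Since $\vs([0,1])$ is the singleton consisting of the identity map and $\muv([0,1])=(1)$, the matrix $T^*(\eta)$ has a single row and $\|T^*(\eta)\|_1=\sum_j T^*(\eta)_{1,j}$. The telescoped identity collapses to $\muv(\Delta)_j=\mus((f^{(n)}_j)^{-1}((0,1)))\,T^*(\eta)_{1,j}$, so summing over $j$ yields $\mus(\Delta)=\sum_j \mus((f^{(n)}_j)^{-1}((0,1)))\,T^*(\eta)_{1,j}$. Each factor $\mus((f^{(n)}_j)^{-1}((0,1)))$ is bounded above by $1$ (because $\mus$ is a probability measure) and below by a uniform positive constant (by the finite neighbour condition), producing the two-sided comparability $\|T^*(\eta)\|_1\asymp\mus(\Delta)$ with the sharp one-sided constant extracted directly from the probability bound. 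The main technical obstacle is verifying the telescoping carefully across all intermediate index tuples in the matrix product; once that identity is secured, the remainder is a uniform boundedness statement made immediate by the finite neighbour condition.
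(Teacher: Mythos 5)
Your proposal is correct and follows essentially the same route as the paper: the paper's proof is precisely the remark preceding the corollary (that $T(e)$ and $T^*(e)$ differ by a diagonal rescaling whose intermediate measure factors cancel along a path, so under the finite neighbour condition the surviving boundary ratios range over a finite set of strictly positive values) combined with \cref{t:approx}; you have merely made the telescoping explicit, which is a welcome clarification. One caveat on the second bullet: your correct identity $\mus(\Delta)=\sum_j\mus\bigl(f_j^{-1}((0,1))\bigr)T^*(\eta)_{1,j}$ together with $\mus\bigl(f_j^{-1}((0,1))\bigr)\leq 1$ actually yields $\mus(\Delta)\leq\norm{T^*(\eta)}_1$, so the constant-free inequality lands on the opposite side from the one printed in the statement; the two-sided comparability $\norm{T^*(\eta)}_1\asymp\mus(\Delta)$ is unaffected, but you should note explicitly that the sharp direction in the corollary as stated appears to be transposed rather than asserting it follows directly from the probability bound.
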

One may also observe that the same principle works for periodic points.
We have the natural analogue of \cref{p:period}:
\begin{corollary}\label{c:fperiod}
    Let $\{S_i\}_{i\in\mathcal{I}}$ be any IFS and suppose $x$ is a periodic point with period $\theta=(e_1,\ldots,e_s)$.
    Then the local dimension of $\mu$ at $x$ exists and is given by
    \begin{equation*}
        \dim_{loc}\mu(x)=\frac{\log\spr(T^*(\theta))}{\log L(\theta)}
    \end{equation*}
    where if $x$ is a boundary point of a net interval with two different symbolic representations given by periods $\theta$ and $\phi$, then $\theta$ is chosen to satisfy
    \begin{equation*}
        \frac{\log\spr(T^*(\theta))}{\log L(\theta)}\geq \frac{\log\spr(T^*(\phi))}{\log L(\phi)}.
    \end{equation*}
\end{corollary}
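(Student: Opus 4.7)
The plan is to reduce \cref{c:fperiod} directly to \cref{p:period} by establishing the single identity
\begin{equation*}
    \spr T(\theta) = \spr T^*(\theta)
\end{equation*}
for every cycle $\theta$ in the transition graph. Once this is proven, the formula for $\diml \mu(x)$ is immediate and the boundary-point selection criterion reduces to the one already treated in \cref{p:period}, since it depends on the two cycles only through their spectral radii and edge lengths.

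To prove the spectral radius identity, I would invoke the element-wise comparability $c_1 T^*(\eta) \preccurlyeq T(\eta) \preccurlyeq c_2 T^*(\eta)$ noted in the paragraph preceding \cref{c:fapprox}, which holds for every admissible path $\eta$ with constants $c_1, c_2 > 0$ depending only on the IFS. The key point is that when $\theta$ is a cycle, both $T(\theta)$ and $T^*(\theta)$ are square matrices of the same size (indexed by the neighbours of the shared source/target vertex), and $\theta^n$ is itself an admissible path for every $n \in \N$ with $T(\theta^n) = T(\theta)^n$ and $T^*(\theta^n) = T^*(\theta)^n$. Applying the uniform element-wise bound to $\eta = \theta^n$ gives
\begin{equation*}
    c_1 T^*(\theta)^n \preccurlyeq T(\theta)^n \preccurlyeq c_2 T^*(\theta)^n,
\end{equation*}
which transfers to $c_1 \lVert T^*(\theta)^n\rVert \le \lVert T(\theta)^n\rVert \le c_2 \lVert T^*(\theta)^n\rVert$ after summing entries. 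Taking $n$th roots and sending $n \to \infty$, Gelfand's formula yields $\spr T(\theta) = \spr T^*(\theta)$.

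Substituting into the conclusion of \cref{p:period} gives the stated formula
\begin{equation*}
    \diml \mu(x) = \frac{\log \spr T(\theta)}{\log L(\theta)} = \frac{\log \spr T^*(\theta)}{\log L(\theta)},
\end{equation*}
and since the two candidate cycles $\theta, \phi$ at a boundary point have $\spr T = \spr T^*$ on each, the selection criterion of \cref{p:period} translates verbatim into the one for $T^*$.

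The only real point requiring care is the uniformity of the constants $c_1, c_2$ across paths of arbitrary length — without this, the Gelfand step would fail. This uniformity is essentially a telescoping phenomenon: each entry $T(e)_{ij}$ carries a factor $\mu(g_j^{-1}((0,1)))/\mu(f_i^{-1}((0,1)))$, and along a product $T(e_1) \cdots T(e_n)$ the intermediate measure ratios cancel pairwise, leaving only the initial and terminal ratio. Combined with the finite neighbour condition (which bounds the set of values these ratios can take), this gives bounds $c_1, c_2$ independent of the length of $\eta$. This is the substantive content behind the remark preceding \cref{c:fapprox}, and it is what makes the whole argument go through uniformly for cycles of arbitrary period.
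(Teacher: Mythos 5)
Your proof is correct, but it takes a genuinely different route from the paper's. The paper's proof of \cref{c:fperiod} is a one-line reduction in the opposite direction: it reruns the argument of \cref{p:period} with $T^*$ in place of $T$, justified by noting that the two-sided comparability of \cref{c:fapprox} holds along the relevant paths because a periodic point meets only finitely many distinct neighbour sets. You instead isolate the single identity $\spr T(\theta)=\spr T^*(\theta)$ and then quote \cref{p:period} as a black box, which is cleaner and avoids repeating the norm estimates. Your Gelfand argument is sound, and the telescoping you describe in fact gives something sharper: one has exactly $T(\eta)_{ij}=\frac{\mus(g_j^{-1}((0,1)))}{\mus(f_i^{-1}((0,1)))}\,T^*(\eta)_{ij}$ for any admissible path, so for a cycle $\theta$ based at a vertex $v$ the matrices $T(\theta)$ and $T^*(\theta)$ are conjugate by the positive diagonal matrix with entries $\mus(f_i^{-1}((0,1)))$ for $f_i\in v$, and the equality of spectral radii is immediate without taking powers.

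Two small caveats. First, the corollary is stated for an arbitrary IFS, so the finite neighbour condition is not available to you as a hypothesis; but you do not need it, since the prefactor in the telescoped formula depends only on the source and target neighbours, and for powers of a fixed cycle these range over the single finite set $v$ --- this is precisely the finiteness the paper's proof appeals to, and you should phrase the uniformity of $c_1,c_2$ that way rather than via the finite neighbour condition. Second, because the spectral radii agree, the boundary-point selection rule really does translate verbatim from \cref{p:period}, which selects $\theta$ \emph{minimizing} $\log\spr T(\theta)/\log L(\theta)$; the inequality $\geq$ printed in the statement of \cref{c:fperiod} is the reverse of the one in \cref{p:period} and appears to be a typo in the paper, so your reduction is consistent with the intended statement but worth flagging.
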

\begin{proof}
    The proof is identical to the proof of \cref{p:period}, noting that the analogue of \cref{c:fapprox} holds since the set $\{\vs(\Delta):x\in\Delta,\Delta\in\mathcal{F}\}$ is finite.
\end{proof}

\subsection{An overlapping IFS with non-commensurable contraction ratios}\label{ss:ifs-noncomm}
Consider the IFS given by the maps
\begin{align*}
    S_1(x) &=\rho\cdot x & S_2(x) &= r\cdot x+\rho(1-r) & S_3(x) &=r\cdot x+1-r
\end{align*}
where $0<\rho,r<1$ satisfy $\rho+2r-\rho r \leq 1$, i.e. $S_2(1) \leq S_3(0)$.
This IFS was initially studied by \cite{lw2004} and was the first example of an iterated function system with overlaps and satisfying the weak separation condition without commensurable contraction ratios.
It is known that the Hausdorff dimension of the attractor $K$ is the unique solution to the equation $\rho^s+2r^s-(\rho r)^s=1$ (see \cite[Prop.~4.9]{lw2004} or \cite[Ex.~5.1]{ln2007}).

Under the assumption that $\rho>r>\rho^2$, we will compute the neighbour sets and the transition graph.
We also give formulas to compute the range of local dimensions.
We will also show (for all valid parameters $r,\rho$) that any associated self-similar measure satisfies the complete multifractal formalism.

\subsubsection{Neighbour sets and the transition graph}\label{sss:ns-tg}
We first compute the neighbour sets and children in complete detail.
The net interval $\Delta_0=[0,1]$ has $\vs(\Delta_0)=\{x\mapsto x\}$ and $\tg(\Delta_0)=1=m(\Delta_0)\cdot 1$ since 1 is the maximal contraction ratio of any of its neighbours.
Thus $\Delta_0$ has children
\begin{equation*}
    (\Delta_1=[0,\rho(1-r)],\Delta_2=[\rho(1-r),\rho],\Delta_3=[\rho,\rho+r-\rho r],\Delta_4=[1-r,r])
\end{equation*}
in $\mathcal{F}_1$.
Note that when $\rho+2r-\rho r<1$, $[\rho+r-\rho r,1-r]$ is not a net interval since its interior does not intersect $K$.
One may compute
\begin{align*}
    \vs(\Delta_1) &= \{x\mapsto x/(1-r)\} & \vs(\Delta_2) &= \{x\mapsto x/\rho,x\mapsto x/r+\frac{1}{r}-1\}\\
    \vs(\Delta_3) &= \{x\mapsto\frac{x}{1-\rho}+\frac{\rho}{1-\rho}\} & \vs(\Delta_4) &= \{x\mapsto x\}.
\end{align*}
Since $\vs(\Delta_4)=\vs(\Delta_0)$, the children of $\Delta_4$ are scaled versions of the children of $\Delta_0$ and have the same neighbour sets by \cref{t:ttype}.
\begin{itemize}[nl]
    \item Since $\rho>r$, $\Delta_1$ has $\tg(\Delta_1)=m(\Delta_1)\cdot (1/(1-r))=\rho$, so $\Delta_1$ has children
        \begin{equation*}
            (\Delta_5 = [0,\rho^2(1-r)],\Delta_6 = [\rho^2(1-r),\rho^2],\Delta_7=[\rho^2,\rho(\rho+r-\rho r)])
        \end{equation*}
        where $\vs(\Delta_5)=\vs(\Delta_1)$, $\vs(\Delta_6)=\vs(\Delta_2)$, and $\vs(\Delta_7)=\vs(\Delta_3)$.
    \item $\Delta_2$ has $\tg(\Delta_2)=\rho$ and one child $\Delta_8=[\rho-\rho r,\rho]$ with $\vs(\Delta_8)=\{x\mapsto x,x\mapsto x/\rho\}$.
        Note that $\Delta_8=\Delta_2$, but $\vs(\Delta_8)\neq\vs(\Delta_2)$.
    \item $\Delta_3$ has $\tg(\Delta_3)=r$ and two children $(\Delta_9=[\rho,\rho+r^2-\rho r^2],\Delta_{10}=[r-r^2,r])$ with $\vs(\Delta_9)=\vs(\Delta_3)$ and $\vs(\Delta_{10})=\vs(\Delta_0)$.
    \item $\Delta_8$ has children $\Delta_{11}=[\rho-\rho r,\rho-\rho r^2],\Delta_{12}=[\rho-\rho r^2,\rho]$ with $\vs(\Delta_{11})=\vs(\Delta_1)$ and $\vs(\Delta_{12})=\vs(\Delta_2)$.
\end{itemize}
Thus by \cref{t:ttype}, there are no new neighbour sets and the IFS satisfies the finite neighbour condition.

For simplicity, fix $v_0=\vs(\Delta_0)$, $v_1=\vs(\Delta_1)$, $v_2=\vs(\Delta_2)$, $v_3=\vs(\Delta_3)$ and $v_4=\vs(\Delta_8)$.
Let $\mu_{\bm{p}}$ be a self-similar measure associated with the IFS, where $\bm{p}=(p_1,p_2,p_3)$.
Observing that $v_4$ has exactly one child, we can construct an equivalent transition graph by removing $v_4$, concatenating the incoming edges with the outgoing edge, and multiplying the corresponding edge lengths and transition matrices.
This results in the modified transition graphs and edge lengths described in \cref{f:tgraph-m}.
\begin{figure}[ht]
    \begin{tikzpicture}[
        baseline=(current bounding box.center),
        vtx/.style={circle,inner sep=1pt,draw=black,fill=red!30},
        elbl/.style={fill=white,circle,inner sep=1pt},
        edge/.style={thick,->,>=stealth}
        ]
        \node[vtx] (v0) at (0,5) {$v_0$};
        \node[vtx] (v1) at (4,0) {$v_1$};
        \node[vtx] (v3) at (4,5) {$v_3$};
        \node[vtx] (v4) at (0,0) {$v_4$};

        \draw[edge] (v0) -- node[elbl]{$e_0$}(v1);
        \draw[edge] (v0) -- node[elbl]{$e_1'$}(v4);
        \draw[edge] (v0) .. controls +(90:2) and +(180:2) .. node[elbl]{$e_3$} (v0);

        \draw[edge] (v1) .. controls +(0:2) and +(270:2) .. node[elbl]{$e_4$} (v1);
        \draw[edge] (v1) -- node[elbl]{$e_6$} (v3);


        \draw[edge] (v3) .. controls +(0:2) and +(90:2) .. node[elbl]{$e_8$} (v3);
        \draw[edge] (v3) to[out=180-12,in=12] node[elbl]{$e_9$} (v0);

        \draw[edge] (v4) to[out=-12,in=180+12]  node[elbl]{$e_{10}$} (v1);
        \draw[edge] (v1) to[out=180-12,in=12] node[elbl]{$e_5'$} (v4);
        \draw[edge] (v4) .. controls +(180:2) and +(270:2) .. node[elbl]{$e_{11}'$} (v4); 
        \draw[edge] (v0) to[out=-12,in=180+12] node[elbl]{$e_2$} (v3);
    \end{tikzpicture}
    \begin{tabular}{ccc}
        Edge & Length & Transition Matrix\\
        $e_0$ & $\rho$ & $\begin{pmatrix}p_1\end{pmatrix}$\\
        $e_1'$ & $r$ & $\begin{pmatrix}p_1p_3&p_2\end{pmatrix}$\\
        $e_2$ & $r$ & $\begin{pmatrix}p_2\end{pmatrix}$\\
        $e_3$ & $r$ & $\begin{pmatrix}p_3\end{pmatrix}$\\
        $e_4$ & $\rho$ & $\begin{pmatrix}p_1\end{pmatrix}$\\
        $e_5'$ & $\rho$ & $\begin{pmatrix}p_1p_3&p_2\end{pmatrix}$\\
        $e_6$ & $r$ & $\begin{pmatrix}p_2\end{pmatrix}$\\
        $e_8$ & $r$ & $\begin{pmatrix}p_2\end{pmatrix}$\\
        $e_9$ & $r$ & $\begin{pmatrix}p_3\end{pmatrix}$\\
        $e_{10}$ & $r$ &$\begin{pmatrix}1\\p_1\end{pmatrix}$\\
        $e_{11}'$ & $r$ & $\begin{pmatrix}p_3&0\\p_1p_3&p_2\end{pmatrix}$
    \end{tabular}
    \caption{Modified transition graph with edge lengths and transition matrices}
    \label{f:tgraph-m}
\end{figure}
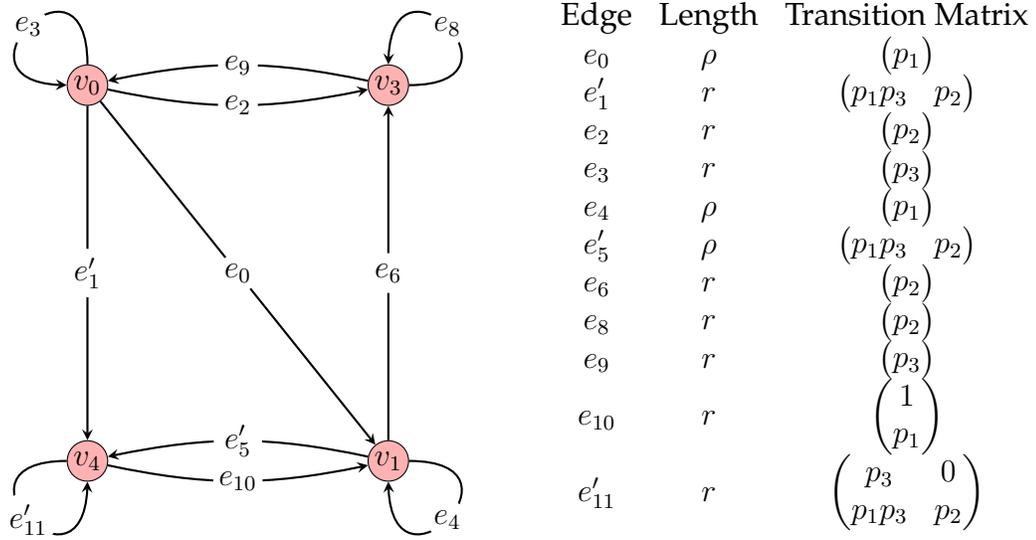
\subsubsection{The attainable local dimensions}\label{sss:non-comm-dim}
We see that the conditions for \cref{c:nl-multi} are satisfied, so that the measure $\mus$ satisfies the complete multifractal formalism and that the local dimensions at periodic points are dense in the set of upper and lower local dimensions.

We now compute the range of local dimensions at periodic points.
We first make note of the following obvious inequality: if $0<a,b,c,d$ and $\log a/\log b\leq\log c/\log d$, then
\begin{equation}\label{e:ineq}
    \frac{\log a}{\log b}\leq\frac{\log ac}{\log bd}\leq\frac{\log c}{\log d}.
\end{equation}

Now let $\eta$ be any cycle contained in $\mathcal{G}$.
If $\eta$ only passes through $v_4$, since $\spr{T^*(e_{11}')}=\max\{p_2,p_3\}$, the local dimension corresponding to the cycle $(e_{11}')$ is $\frac{\log \max\{p_2,p_3\}}{\log r}$.
Otherwise, $\eta$ passes through some vertex other than $v_4$.
Thus without loss of generality, $\eta$ begins and ends and some vertex $v\neq v_4$.
Suppose $\eta$ visits some vertex $w\neq v_4$ twice, i.e. $\eta=\eta_1\eta_2\eta_3$ where $\eta_1$ is a path from $v$ to $w$, $\eta_2$ is a cycle from $w$ to $w$, and $\eta_3$ is a path from $w$ to $v$.
Then $\eta$ can be written as a concatenation of cycles $\eta_2$ and $\eta_3\eta_1$, where $T(\eta_2)$ and $T(\eta_3\eta_1)$ are singletons, and by \cref{e:ineq}, we have that
\begin{align*}
    \min\Bigl\{\frac{\log\spr T(\eta_2)}{\log L(\eta_2)},\frac{\log\spr T(\eta_3\eta_1)}{\log L(\eta_3\eta_1)}\Bigr\}&\leq \frac{\log\spr T(\eta)}{\log L(\eta)}\\
                                                                                                                     &\leq\max\Bigl\{\frac{\log\spr T(\eta_2)}{\log L(\eta_2)},\frac{\log\spr T(\eta_3\eta_1)}{\log L(\eta_3\eta_1)}\Bigr\}.
\end{align*}
In other words, the minimum and maximum local dimensions on cycles are attained at cycles which do not repeat any vertex other than $v_4$.
Thus it suffices to consider all such families of cycles.

If $\eta$ does not pass through $v_4$, the only non-repeating cycles are $(e_3)$, $(e_4)$, $(e_8)$, and $(e_2,e_9)$.
We thus see that the maximum and minimum possible local dimensions are attained at the points in
\begin{equation*}
    S = \bigl\{\frac{\log p_1}{\log \rho},\frac{\log p_2}{\log r},\frac{\log p_3}{\log r}\bigr\}.
\end{equation*}

Otherwise, $\eta$ passes through $v_4$.
A straightforward induction argument shows that
\begin{equation*}
    T^*(e_{11}')^n =
    \begin{cases}
        \begin{pmatrix}p_3^n & 0\\\frac{p_1p_3(p_2^n-p_3^n)}{p_2-p_3}&p_2^n\end{pmatrix} &: p_2\neq p_3\\
        \begin{pmatrix}p^n&0\\np^np_1 & p^n\end{pmatrix} &: p_2=p_3=:p
    \end{cases}
    .
\end{equation*}
Now, let
\begin{align*}
    \eta_{1,n}&=(e_6,e_9,e_1',\underbrace{e_{11}',\ldots,e_{11}'}_n,e_{10}) & \eta_{2,n}&=(e_5',\underbrace{e_{11}',\ldots,e_{11}'}_n,e_{10})
\end{align*}
denote the two possible families of cycles which go through $v_4$ and do not repeat a vertex not in $v_4$.
We then have that
\begin{align*}
    a_n\coloneqq \spr T^*(\eta_{1,n})&=
    \begin{cases}
        \frac{p_1p_2p_3(p_2^{n+2}-p_3^{n+2})}{p_2-p_3} &: p_2\neq p_3\\
        (2+n)p^{n+2}(1-2p)^2 &: p_2=p_3=:p
    \end{cases}
    \\
    b_n\coloneqq \spr T^*(\eta_{2,n})&=
    \begin{cases}
        \frac{p_1(p_2^{n+2}-p_3^{n+2})}{p_2-p_3} &: p_2\neq p_3\\
        (2+n)p^{n+1}(1-2p)^2 &: p_2=p_3=:p
    \end{cases}
    \\
    L(\eta_{1,n})&=r^{n+4}\\
    L(\eta_{2,n})&= \rho r^{n+1}.
\end{align*}
Let
\begin{align*}
    a_{\min} &= \inf_n\frac{\log a_n}{(n+4)\log r} & a_{\max} &= \sup_n\frac{\log a_n}{(n+4)\log r}\\
    b_{\min} &= \inf_n\frac{\log b_n}{(n+1)\log r+\log\rho} & b_{\max} &= \sup_n\frac{\log b_n}{(n+1)\log r+\log\rho}.
\end{align*}
Then the minimal local dimension is equal to
\begin{equation*}
    \alpha_{\min}\coloneqq \min\bigl\{\frac{\log p_1}{\log \rho},\frac{\log p_2}{\log r},\frac{\log p_3}{\log r},a_{\min},b_{\min}\bigr\}.
\end{equation*}
and the maximal local dimension is equal to
\begin{equation*}
    \alpha_{\max}\coloneqq \max\bigl\{\frac{\log p_1}{\log \rho},\frac{\log p_2}{\log r},\frac{\log p_3}{\log r},a_{\max},b_{\max}\bigr\}.
\end{equation*}
The parameters $\alpha_{\min}$ and $\alpha_{\max}$ can be determined exactly in many situations, but generic solutions are tedious.
Additional details are left to the reader.

\subsubsection{The maximal open sets of the weak separation condition}\label{sss:valid-open}
Here we show, under the same assumption $\rho>r>\rho^2$ that the essential net interval $[0,1]\cap K$ is not contained in a union of open balls $U_0$ satisfying the maximal value in \cref{e:wsc-max}.
In fact, we show that for any $\epsilon>0$, the open set $(1-\epsilon,1)\cap K$ is not contained a finite union of such open balls.
In addition, this shows that for any $U(x,t)$ with $\#\mathcal{S}_t(U(x,t))$ maximal, we must have $1\notin U(x,t)$, whereas $1\in K=K_{\ess}$.
A similar argument gives this result for general parameters $\rho$ and $r$, but the details are tedious and we omit the proof.

We first note that $\sup_{x\in\R,t>0}\#\mathcal{S}_t(U(x,t))\geq 5$.
To see this, take $t=1/4$ and $U_0\coloneqq U(1/4,1/4)$.
Then for each $\sigma\in\{11,12,13,22,23\}$, we have $S_\sigma(K)\cap U_0\neq\emptyset$ (since $S_{13}=S_{21}$, we exclude the word $21$).

To show that $(1-\epsilon,1)\cap K$ is not contained in a finite union of maximal open balls for each $\epsilon>0$, since $1$ is an accumulation point for $K$ it suffices to show that if $t>0$ and $U(x,t)$ is any open ball such that $x+t=1$, $\#\mathcal{S}_t(U(x,t))<5$.
A direct check shows that for $t>1/4$, $\#\mathcal{S}_t(U(x,t))<5$.
Otherwise, let $m\geq 1$ be such that $1/4^{m+1}<t\leq 1/4^m$.
Since the rightmost child of $[0,1]$ is the net interval $[3/4,1]\in\Lambda_{1/4}$ with $\vs([3/4,1])=\vs([0,1])$, the net interval in generation $t$ containing $1$ is the interval $\Delta=[1-1/4^m,1]$ which has $\vs(\Delta)=\vs([0,1])$, and thus $U(x,t)\subseteq\Delta'=[1-1/4^{m-1}]$ where $\vs(\Delta')=\vs([0,1])$.
But then up to normalization, we know that the net intervals contained in $\Delta'$ are the same as the net intervals contained in $[0,1]$ so the case for general $t$ reduces to the case $t>1/4$.

\subsubsection{On the multifractal formalism}
We now dispense with the assumptions on the parameters $\rho,r$ and establish the following result.
\begin{theorem}\label{t:rr-multif}
    Any invariant measure $\mus$ associated with the IFS
    \begin{align*}
        S_1(x) &=\rho\cdot x & S_2(x) &= r\cdot x+\rho(1-r) & S_3(x) &=r\cdot x+1-r
    \end{align*}
    where $0<\rho,r<1$ satisfy $\rho+2r-\rho r \leq 1$ satisfies the complete multifractal formalism.
\end{theorem}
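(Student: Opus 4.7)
The plan is to apply \cref{c:nl-multi} via the Remark immediately following it: if the vertex $v_0 \coloneqq \vs([0,1]) = \{x \mapsto x\}$ lies in $V(\mathcal{G}_{\ess})$, then $\mathcal{G} = \mathcal{G}_{\ess}$ (since every vertex is reachable from $v_0$ and the essential class is closed under reachability), no cycle lies outside the essential class, and the corollary yields the complete multifractal formalism for every probability vector. The IFS satisfies the weak separation condition by~\cite{lw2004}, so \cref{p:wsc-es} gives a unique essential class, and the theorem reduces to the statement $v_0 \in V(\mathcal{G}_{\ess})$. Using closure of the essential class under reachability, this is in turn equivalent to showing that every net interval $\Delta$ admits a descendant $\Delta'$ with $\vs(\Delta') = v_0$.

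The key geometric input is the hypothesis $\rho + 2r - \rho r \leq 1$, which rewrites as $S_2(1) \leq 1 - r = S_3(0)$ and isolates $S_3([0,1]) = [1-r, 1]$ from $S_1([0,1]) \cup S_2([0,1])$. First I would check directly that $[1-r, 1]$ is a net interval whose only contributing word at any generation $t \in (r, 1]$ is $S_3$ itself, so that $\vs([1-r, 1]) = \{T_{[1-r,1]}^{-1} \circ S_3\} = v_0$; this furnishes a self-loop at $v_0$ in $\mathcal{G}$. For a general net interval $\Delta$, I would pick $\sigma \in \mathcal{I}^*$ with $r_\sigma > 0$ and $S_\sigma([0,1]) \subset \Delta^\circ$ (possible because $\Delta^\circ \cap K$ is non-empty, after appending a letter with positive contraction if necessary) and then argue that $S_\sigma([1-r,1])$ is again a net interval with $\vs = v_0$. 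Since $S_\sigma([1-r,1]) \subset \Delta$, the corresponding path in $\mathcal{G}$ descends from $\vs(\Delta)$ and terminates at $v_0$, supplying the required reachability.

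The main technical obstacle lies in the last step: ruling out any \emph{non-local} word $\tau$ (one not of the form $\sigma\tau'$) that might contribute an extra neighbour to $S_\sigma([1-r,1])$. The weak separation condition bounds the number of such candidate words, and the scaled gap inside $S_\sigma([0,1])$ should eliminate them: any $S_\tau([0,1])$ meeting $S_\sigma([1-r,1])^\circ$ must be a descendant of one of the finitely many neighbours of $\Delta$, and among such descendants only $\sigma 3$ penetrates into the rightmost sub-cell by the inequality $\rho + r - \rho r \leq 1 - r$ applied at each scale. A careful case analysis tracking how such descendants can intersect $(S_\sigma(1-r), S_\sigma(1))$ is where the bulk of the technical work in a full write-up concentrates. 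Once this is settled, $v_0 \in V(\mathcal{G}_{\ess})$ and \cref{c:nl-multi} delivers the complete multifractal formalism for every associated self-similar measure $\mus$.
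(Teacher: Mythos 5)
Your overall strategy --- reduce via \cref{c:nl-multi} and the remark following it to showing that $v_0=\{x\mapsto x\}$ lies in the essential class, and establish this by exhibiting, below every net interval, a descendant with neighbour set $v_0$ --- is exactly the paper's strategy. The gap is in the mechanism you propose for producing that descendant, and it is not merely technical: the claim that $S_\sigma([1-r,1])$ is a net interval with neighbour set $v_0$ whenever $r_\sigma>0$ and $S_\sigma([0,1])\subset\Delta^\circ$ is false. Take $\Delta=[0,1]$ and $\sigma=13$, so that $S_{13}([0,1])=[\rho-\rho r,\rho]\subset(0,1)$ and $r_{13}=\rho r>0$. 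Then $S_{13}([1-r,1])=S_{133}([0,1])=[\rho-\rho r^2,\rho]$. Because of the exact overlap $S_{13}=S_{21}$ one has $S_{22}(0)=S_{213}(0)=S_{133}(0)=\rho-\rho r^2$, while $S_{22}(1)=\rho+r^2(1-\rho)>\rho$, so $S_{22}([0,1])\supseteq S_{133}([0,1])$ and $S_{22}(K)$ meets $(\rho-\rho r^2,\rho)$. Hence $[\rho-\rho r^2,\rho]$ acquires the extra neighbour $x\mapsto x/\rho$ generated by $S_{22}$: in the paper's notation this interval is $\Delta_{12}$, with $\vs(\Delta_{12})=\vs(\Delta_2)\neq v_0$. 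The gap inequality $\rho+2r-\rho r\le 1$ isolates $S_3([0,1])$ from $S_1([0,1])\cup S_2([0,1])$ at the top level, but it does not propagate under composition with $S_\sigma$, precisely because $S_2$ fixes the point $\rho=S_1(1)$, so the words $2^k1=2^{k-1}13$ all send $1$ to $\rho$ and pile up against the right endpoint of $\Delta_2$ at every scale. (Note also that the transfer argument of \cref{p:wsc-es} and \cref{r:im-essential}, which does show that $S_\sigma(\Delta')$ is a net interval with unchanged neighbour set, requires $\Delta'$ to have maximal covering neighbour set or to sit inside a maximal ball $U(x_0,t_0)$; the interval $[1-r,1]$ has a single neighbour and satisfies neither hypothesis.)

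This is exactly where the paper does its real work. After disposing of the self-loop at $v_0$ and the vertices $\vs(\Delta_1)$, $\vs(\Delta_3)$ by short direct computations (for instance $S_{23}([0,1])$ is a net interval with neighbour set $v_0$ because $\rho+r\le1$ keeps it clear of $S_1([0,1])$), it isolates the problematic vertex $v_2=\vs(\Delta_2)$ and analyses its offspring by hand: it introduces the words $\sigma_k=2^k1$ and $\tau_k=2^k$, takes $m$ maximal with $r^m>\rho$, lists the net intervals of generation $\rho$ inside $\Delta_2$, and shows (splitting into the cases $r^{m+1}<\rho$ and $r^{m+1}=\rho$) that their neighbour sets recur and eventually reach $\vs(\Delta_1)$, hence $v_0$. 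Your write-up defers precisely this step to ``a careful case analysis,'' so as it stands the proof is incomplete at the one point where the hypothesis on $\rho$ and $r$ alone does not suffice and the exact-overlap structure must actually be confronted.
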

\begin{proof}
    By \cref{c:nl-multi} and the following remark, since the IFS satisfies the weak separation condition, it suffices to show that the vertex $v_0\coloneqq \{x\mapsto x\}$ is contained in the essential class.
    As argued in \cref{sss:ns-tg}, the net interval $[0,1]$ has children
    \begin{equation*}
        \bigl(\Delta_1=[0,\rho(1-r)],\Delta_2=[\rho(1-r),\rho],\Delta_3=[\rho,\rho+r-\rho r],\Delta_4=[1-r,r]\bigr)
    \end{equation*}
    in $\mathcal{F}_1$ with neighbour sets
    \begin{align*}
        \vs(\Delta_1) &= \{x\mapsto x/(1-r)\} & \vs(\Delta_2) &= \{x\mapsto x/\rho,x\mapsto x/r+\frac{1}{r}-1\}\\
        \vs(\Delta_3) &= \{x\mapsto\frac{x}{1-\rho}+\frac{\rho}{1-\rho}\} & \vs(\Delta_4) &= \{x\mapsto x\}.
    \end{align*}
    In particular, there is an edge from $v_0$ to $v_0$.
    Moreover, since the word $23$ is in $\Lambda_r$, where $S_{23}([0,1])$ is disjoint from $S_3([0,1])$, $S_{22}([0,1])$, and $S_1([0,1])$ by the assumptions on $\rho$ and $r$, it follows that $S_{23}([0,1])$ is a net interval with neighbour set $v_0$.
    Thus there is an edge from $\vs(\Delta_3)$ to $v_0$.
    Similarly, the words $11$ and $12$ are in $\Lambda_\rho$, where $S_{12}([0,1])$ is disjoint from $S_2([0,1])$, so as computed in \cref{sss:ns-tg}, the children of $\Delta_1$ have neighbour sets $\vs(\Delta_1)$, $\vs(\Delta_2)$, and $\vs(\Delta_3)$.
    Since there is an edge from $\vs(\Delta_3)$ to $v_0$, there is a path from $\vs(\Delta_1)$ to $v_0$.

    It remains to consider the offspring of $v_2\coloneqq \vs(\Delta_2)$.
    We will treat the case where $r>\rho$; the case where $r\leq\rho$ follows by an analogous argument.
    Let $m$ be maximal such that $r^m>\rho$.
    We will compute the net intervals in generation $\Lambda_{r^m}$.

    For $0\leq k\leq m$ write
    \begin{align*}
        \sigma_k &= \underbrace{2\ldots 2}_{k\text{ times}}1 &\tau_k &= \underbrace{2\ldots 2}_{k\text{ times}}.
    \end{align*}
    For simplicity, given $t>0$, write $\Gamma_t=\{S_\omega:\omega\in\Lambda_t,S_\omega((0,1))\cap\Delta_2\neq\emptyset\}$.
    Note that $S_2(S_1(1))=S_1(1)$ where $S_1(1)$ is the right endpoint of $\Delta_2$, so that $S_{\sigma_k}(S_1(1))=S_1(1)$.
    Thus by choice of $m$, we have for $k\leq m$
    \begin{equation*}
        \Gamma_{r^k}=\{S_{\tau_{k+1}},S_{\sigma_0},S_{\sigma_1},\ldots,S_{\sigma_k}\}.
    \end{equation*}

    First assume $r^{m+1}<\rho$.
    Since $r^{m+1}<\rho$ and $S_1([0,1])\supseteq\Delta_2\supseteq\Delta^{i}$, $\tg(\Delta^{i})=\rho$.
    Thus since $S_{12}(1)\leq S_2(0)$ and $S_{\sigma_0 2}=S_{\sigma_1}$, we have
    \begin{equation*}
        \Gamma_\rho=\{\tau_{m+1},S_{\sigma_1},\ldots,S_{\sigma_m}\}.
    \end{equation*}
    Since $S_\tau(0)>S_{\sigma_m}(1)$, the net intervals in $\mathcal{F}_{\rho}$ contained in $\Delta_2$ are given, ordered from left to right,
    \begin{align*}
        \Delta^{i} &=[S_{\sigma_i}(0),S_{\sigma_{i+1}}(0)] & \Delta^{m} &= [S_{\sigma_m}(0),S_\tau(0)] & \Delta^{m+1} &= [S_\tau(0),S_{\sigma_m}(1)]
    \end{align*}
    for $1\leq i<m$.
    Since $\Gamma_{r^{m-1}}=\{S_2^{-1}\circ g:g\in\Gamma_\rho\}$, for each $1\leq i\leq m+1$, $S_2^{-1}(\Delta^{i})\in\mathcal{F}_{r^{m-1}}$ with $\vs(S_2^{-1}(\Delta^{i}))=\vs(\Delta^{i})$.
    But again $S_2$ fixes the right endpoint of $\Delta_2$, so that
    \begin{equation*}
        \Delta_2\supseteq S_2^{-1}(\Delta^{i})\supseteq\Delta^{j}
    \end{equation*}
    for each $2\leq i\leq m+1$.
    In particular, every child of $\vs(\Delta^{i})$ is of the form $\vs(\Delta^{j})$, and there is a path from $\vs(\Delta^{i})$ to $\vs(\Delta^{1})$ for each $i\geq 2$.
    Moreover, a direct computation shows that $\vs(\Delta^{1})=\vs(\Delta_1)$, so there is a path from $\vs(\Delta^{1})$ to $v_0$.
    Thus there are no new net intervals, and there is a path to $v_0=\{x\mapsto x\}$ from any vertex in the transition graph, as required.

    In the case $r^{m+1}=\rho$, we get
    \begin{equation*}
        \Gamma_\rho=\{\tau_{m+2},S_{\sigma_1},\ldots,S_{\sigma_{m+1}}\}
    \end{equation*}
    so that $\Gamma_\rho$ is a rescaled version of $\Gamma_{r^m}$, and the argument follows similarly.
\end{proof}

\subsection{On an example of Deng and Ngai}
In \cite[Ex.~8.5]{dn2017}, Deng and Ngai introduced the following IFS similar in structure to \cref{ss:ifs-noncomm} but with an additional overlap.
Consider IFS defined by following four maps
\begin{align*}
    S_1(x) &= \rho x & S_2(x) &= rx+\rho(1-r)\\
    S_3(x) &= \rho^{-1}r^2 x+(1-r)(\rho+r) & S_4(x) &= rx+(1-r)
\end{align*}
where $0<\rho,r\in<1$ satisfy $r^2<\rho$ and $\rho(r-1)(\rho+r-1)>r^2$.
The constraints on $\rho$ and $r$ ensure that $S_3((0,1))\cap S_4((0,1))=\emptyset$.

The parameters of this IFS are chosen so that $S_{14}=S_{21}$ and $S_{24}=S_{31}$.
One can verify, arguing similarly to \cref{t:rr-multif}, that $\mathcal{G}=\mathcal{G}_{\ess}$ and hence any associated self-similar measure satisfies the complete multifractal formalism.

\subsection{A modified multifractal formalism for Cantor-like measures}
Consider the family of IFS given by maps
\begin{equation*}
    \bigl\{S_j(x)=\frac{x}{r}+\frac{j}{mr}(r-1):0\leq j\leq m\bigr\}
\end{equation*}
where $m\geq r\geq 2$ and $m,r$ are integers.
This family of IFS, with appropriate probabilities, contains rescaled versions of measures such as convolutions of the usual Cantor measure.
In particular, certain self-similar measures in this family were among the first recognized for which the multifractal formalism can fail~\cite{hl2001}.
The set of local dimensions is known to consist of a closed interval and, with appropriate probabilities, an isolated point.
The $L^q$-spectra have also been computed, as well as a modified multifractal formalism \cite{fl2009,flw2005,lw2005,shm2005}; our results here are minor improvements of existing results and are primarily useful as illustrations of the theorems.

Fix any IFS $\{S_i\}_{i\in\mathcal{I}}$ in this family with attractor $K$ and associated self-similar measure $\mu$.
Arguing similarly to \cite[Prop.~7.1]{hhm2016}, one may verify that $K=[0,1]$, $K_{\ess}=(0,1)$, and
\begin{equation*}
    K_m\coloneqq \bigcup_{\substack{\Delta\in\mathcal{F}_{r^{m-1}}\\\vs(\Delta)\in V(\mathcal{G}_{\ess})}}=\bigl[\frac{r-1}{kr^m},1-\frac{(r-1)}{kr^m}\bigr].
\end{equation*}
Then \cref{t:multi-wsc} (weak regularity is always satisfied since $K=[0,1])$ gives that each $\mu_m\coloneqq \mus|_{K_m}$ satisfies the complete multifractal formalism and
\begin{equation*}
    D(\mu_m)=\{\diml\mus(x):x\in(0,1)\}.
\end{equation*}
This provides an alternative proof of some of the results contained in \cite{flw2005,shm2005} (without the assumption $k<2r-2$) and a variation of~\cite[Ex.~6.2]{fl2009}.

From the perspective of \cref{c:nl-multi}, the obstruction to the multifractal formalism is combinatorial: there is a cycle outside the essential class which contributes a point with local dimension not contained in the closed interval $\{\diml\mu(x):x\in K_{\ess}\}$.

\bibliographystyle{amsplain}
\bibliography{texproject/citations/local-main}
\end{document}